\setlist[enumerate]{label=\upshape(\arabic*)}
\newlist{myenumi}{enumerate}{1}
\setlist[myenumi,1]{label=\upshape(\roman*)}
\newlist{myenuma}{enumerate}{1}
\setlist[myenuma,1]{label=\upshape(\alph*)}
\declaretheorem[name=Theorem,numberwithin=section]{thm}
\declaretheorem[name=Theorem, numbered=no]{thm*}
\declaretheorem[name=Lemma,numberlike=thm]{lem}
\declaretheorem[name=Lemma,numbered=no]{lem*}
\declaretheorem[name=Corollary,numberlike=thm]{cor}
\declaretheorem[name=Proposition,numberlike=thm]{prop}
\declaretheorem[name=Definition,numberlike=thm, style=definition]{defi}
\declaretheorem[name=Example, numberlike=thm, style=remark]{ex}
\declaretheorem[name=Remark, numberlike=thm, style=remark]{rem}
\declaretheorem[name=Conjecture, numberlike=thm, style=remark]{conj}
\numberwithin{equation}{section}
\crefname{figure}{Figure}{Figures}
\crefname{table}{Table}{Tables}
\crefname{thm}{Theorem}{Theorems}
\crefname{lem}{Lemma}{Lemmas}
\crefname{defi}{Definition}{Definitions}
\crefname{cor}{Corollary}{Corollaries}
\crefname{prop}{Proposition}{Propositions}
\crefname{ex}{Example}{Examples}
\crefname{rem}{Remark}{Remarks}
\crefname{conj}{Conjecture}{Conjectures}
\crefname{section}{Section}{Sections}
\crefname{chapter}{Chapter}{Chapters}
\crefname{appendix}{Appendix}{Appendices}
\NewDocumentCommand{\parensup}{m}{\textup{(}#1\textup{)}}
\NewDocumentCommand{\Dirac}{}{\slashed{\mathfrak{D}}}
\NewDocumentCommand{\tensgr}{}{\mathbin{\widehat{\otimes}}}
\NewDocumentCommand{\boxtensgr}{}{\mathbin{\widehat{\boxtimes}}}
\NewDocumentCommand{\SpinBdl}{}{\mathfrak{S}}
\DeclareMathOperator{\clm}{c}
\DeclareMathOperator{\ind}{index}
\NewDocumentCommand{\PM}{}{\mathrm{PM}}
\DeclareMathOperator{\dist}{dist}
\DeclareMathOperator{\width}{width}
\title{Band width estimates via the Dirac operator}
\author{Rudolf Zeidler\thanks{Funded by the Deutsche Forschungsgemeinschaft (DFG, German Research Foundation) under Germany's Excellence Strategy EXC 2044--390685587, Mathematics Münster: Dynamics -- Geometry -- Structure.\\
\emph{MSC2010:} 53C21 (Primary) 19K56, 58J22 (Secondary)}}
\affil{Mathematical Institute\\University of Münster, Germany\\\vspace{0.2cm}
email:~\href{mailto:math@rzeidler.eu}{math@rzeidler.eu}\\ url:~\href{https://www.rzeidler.eu}{www.rzeidler.eu}}
\date{}
\begin{document}

\maketitle
\begin{abstract}
  Let $M$ be a closed connected spin manifold such that its spinor Dirac operator has non-vanishing (Rosenberg) index.
  We prove that for any Riemannian metric on $V = M \times [-1,1]$ with scalar curvature bounded below by $\sigma > 0$, the distance between the boundary components of $V$ is at most $C_n/\sqrt{\sigma}$, where $C_n = \sqrt{(n-1)/{n}} \cdot C$ with $C < 8(1+\sqrt{2})$ being a universal constant.
  This verifies a conjecture of Gromov for such manifolds.
  In particular, our result applies to all high-dimensional closed simply connected manifolds $M$ which do not admit a metric of positive scalar curvature.
  We also establish a quadratic decay estimate for the scalar curvature of complete metrics on manifolds, such as $M \times \R^2$, which contain $M$ as a codimension two submanifold in a suitable way.
  Furthermore, we introduce the ``$\mathcal{KO}$-width'' of a closed manifold and deduce that infinite $\mathcal{KO}$-width is an obstruction to positive scalar curvature.
\end{abstract}
\section{Introduction}
There are two known techniques to prove that a given high-dimensional smooth manifold does not admit a metric of positive scalar curvature.
One is based on the Schrödinger--Lichnerowicz vanishing theorem~\cite{Schroedinger,Lichnerwociz:Spineurs} which implies that differential topological invariants associated to the Dirac operator on a spin manifold yield obstructions to positive scalar curvature.
The other goes back to \citeauthor{SchoenYau:HypersurfaceMethod}~\cite{SchoenYau:HypersurfaceMethod} and works by constructing (chains of) minimal hypersurfaces.
Finding candidates for suitable hypersurfaces is a homological problem and hence this yields topological obstructions.
Both methods have a very different flavour than classical metric comparison theorems for positive lower bounds on sectional curvature or Ricci curvature.
Remarkably, however, in cases where the minimal surface obstruction technique is available, Gromov~\cite{Gromov:MetricInequalitiesScalar} recently proved certain quantitative distance estimates in the presence of a lower bound on the \emph{scalar} curvature.
The goal of the present article is to establish similar results in situations where the Dirac operator method applies.

Our impetus is the following conjecture which Gromov formulated in \cite[p.~2, Question 5]{Gromov:101} and (with the sharp constant) in \cite[11.12, Conjecture C]{Gromov:MetricInequalitiesScalar}.
\begin{conj}\label{BandConjecture}
	Let \(M\) be a closed manifold of dimension \(n-1 \geq 5\) which does not admit a metric of positive scalar curvature.
	There exists a constant \(C_n < \infty\) such that  every Riemannian manifold \(V\) which is diffeomorphic to \(M \times [-1,1]\) and has scalar curvature bounded below by \(\sigma > 0\) satisfies
	\[
		\width(V) \coloneqq \dist(\partial_- V, \partial_{+}V) \leq \frac{C_n}{\sqrt{\sigma}},
	\]
	where \(\partial_{\pm} V\) denotes the boundary component corresponding to \(M \times \{\pm 1\}\).
\end{conj}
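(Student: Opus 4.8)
The plan is to establish the estimate by dividing the closed manifolds $M$ of dimension $n-1 \ge 5$ without a metric of positive scalar curvature into the two families for which an obstruction to positive scalar curvature is actually known, and producing a quantitative band-width bound in each case.

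\emph{Dirac case.} Suppose the Rosenberg index $\alpha(M) \in KO_{n-1}(C^*_{\mathrm{r}}\pi_1 M)$ is non-zero. On the band $V \cong M \times [-1,1]$ I would twist the spinor Dirac operator by the canonical flat Hilbert $C^*_{\mathrm{r}}\pi_1 M$-module bundle and add a Callias-type potential built from a function $\varphi$ that increases from $-\infty$ to $+\infty$ across the $[-1,1]$-direction, with $|\nabla\varphi|$ prescribed. A Lichnerowicz--Weitzenböck computation shows that if $\mathrm{scal} \ge \sigma$ and the width is at least $C_n/\sqrt{\sigma}$ with $C_n$ of the form $\sqrt{(n-1)/n}\cdot C$, then the potential term dominates and the deformed operator is invertible, hence has vanishing index; a homotopy back to $\varphi \equiv 0$ identifies the index with $\alpha(M) \neq 0$, a contradiction. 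Via Stolz's resolution of the Gromov--Lawson--Rosenberg conjecture in the simply connected case, every closed simply connected $M$ with $\dim M = n-1 \ge 5$ and no positive scalar curvature has non-zero (index-theoretic) $\alpha(M)$, so this case alone already settles the conjecture for all high-dimensional simply connected $M$.

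\emph{Minimal-hypersurface case.} Suppose instead that the absence of positive scalar curvature on $M$ is witnessed by the Schoen--Yau mechanism --- for instance $M$ admits a map of non-zero degree to an enlargeable target, or is more generally of Schoen--Yau--Schick type. Here I would apply Gromov's $\mu$-bubble refinement of the minimal-hypersurface argument directly to $V$: minimize the weighted functional $\Omega \mapsto \mathcal{H}^{n-1}(\partial^*\Omega) - \int_\Omega h\, d\mathcal{H}^n$ over Caccioppoli domains containing $\partial_- V$ and avoiding $\partial_+ V$, with $h$ blowing up at both ends, and obtain a smooth stable $\mu$-bubble $\Sigma$ separating the boundary components (invoking the higher-dimensional regularity theory of Schoen--Yau where needed). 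Then $\Sigma$ still admits a non-zero-degree map to the same target, while the stability inequality combined with $\mathrm{scal} \ge \sigma$ forces $\Sigma$ to carry positive scalar curvature after the standard conformal modification --- contradicting the obstruction on the target. Tracking the length scale encoded in $h$ turns this into the claimed width bound, reproducing Gromov's theorem.

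\emph{The main obstacle.} It is \emph{not} known that these two families exhaust all closed manifolds $M$ without positive scalar curvature: this is precisely the content of the (stable) Gromov--Lawson--Rosenberg conjecture together with the expectation that the minimal-hypersurface method is complete, and Schick's counterexample already refutes the naive unstable version, so the list of admissible cases may have to be enlarged. Consequently the conjecture \emph{as literally stated}, for an arbitrary closed $M$ without positive scalar curvature, appears to lie beyond current technology; what this approach yields unconditionally is its restriction to $M$ in either family --- in particular all closed simply connected $M$ with $\dim M \ge 5$. I expect the genuinely new work to be concentrated on the Dirac side: making the index obstruction survive the potential deformation on a band while retaining explicit control of the width in terms of $\sigma$, since the minimal-surface estimate is already due to Gromov.
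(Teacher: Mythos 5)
Your assessment is essentially the paper's own: the statement is a \emph{conjecture}, and the paper does not prove it for arbitrary $M$ without positive scalar curvature --- it proves it precisely for spin $M$ with non-vanishing Rosenberg index (hence for all simply connected $M$ of dimension $\geq 5$, via Gromov--Lawson and Stolz), and explicitly remarks that the known counterexamples to the unstable Gromov--Lawson--Rosenberg conjecture are exactly the reason one cannot expect the two known obstruction families to exhaust all cases. Your Dirac-case sketch is essentially the paper's argument: a Callias-type operator $\Dirac_{W,E}\tensgr 1 + r\,x\tensgr\epsilon$ twisted by the Mishchenko bundle, a Schrödinger--Lichnerowicz estimate showing invertibility when the band is too wide, and the Friedrich refinement $\langle\Dirac^2 u\mid u\rangle\geq \tfrac{n\sigma}{4(n-1)}\langle u\mid u\rangle$ to extract the factor $\sqrt{(n-1)/n}$. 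The one place your sketch should be tightened is the identification of the index: a ``homotopy back to $\varphi\equiv 0$'' does not directly make sense on a compact band (no Fredholm problem without boundary conditions, and on a closed partitioned manifold the Callias index equals the index on the \emph{separating hypersurface}, not on the total space); the paper instead attaches infinite cylinders to $\partial_\pm V$, takes $x$ to be the signed distance to $\partial_- V$, and invokes the partitioned manifold index theorem $\ind_{\PM}(\Dirac_{W,E},x)=\ind(\Dirac_{M,E|_M})=\alpha(M)$. With that correction, your proposal and the paper coincide on what is actually provable, and your concluding caveat --- that the conjecture as literally stated is open --- is exactly the paper's position.
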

The evidence so far suggests that the optimal such constant should only depend on the dimension of \(M\).
 More ambitiously, it is conjectured to be
\begin{equation}C_n = 2 \pi \sqrt{\frac{n-1}{n}} < 2 \pi.\label{eq:OptimalConstant}\end{equation}

\begin{rem}\label{rem:OptimalConstant}
The constant \labelcref{eq:OptimalConstant} would indeed be the best possible. That is, for any manifold \(M\) of dimension \(n-1\) and \(\varepsilon >0\), there exists a Riemannian metric \(g\) on \(V = M \times [-1,1]\) with \(\scal_{g} \geq n(n-1)\) and \(\width(V, g) \geq 2 \pi /n - \varepsilon\), see \cite[p.~653, Optimality of \(2\pi/n\)]{Gromov:MetricInequalitiesScalar}.
\end{rem}

\begin{rem}\label{rem:PSConM}
If \(M\) admits a metric of positive scalar curvature \(g\), then no such constant exists because of the cylinder metric \(g \oplus \D t^2\) on \(M \times [-l,l]\) for arbitrary \(l > 0\).
\end{rem}

To put this conjecture in the context of classical Riemannian geometry, consider stronger curvature conditions for a moment.
The second variation formulas imply that, if there is a lower bound \(\sigma > 0\) on sectional curvature, a minimal geodesic has length at most \(\pi / \sqrt{\sigma}\).
More generally, this holds for a lower bound \((n-1) \sigma > 0\) on Ricci curvature.
This is what underlies the classical Bonnet--Myers theorem~\cite{Myers}.
Thus \cref{BandConjecture} can be viewed as asking for an analogue of these results for scalar curvature.

Gromov established \cref{BandConjecture} for the torus and related manifolds with the optimal constant, see~\cite[Sections~2, 4, 11.7]{Gromov:MetricInequalitiesScalar}.
For more general classes of manifolds which are approachable by the Schoen–Yau minimal hypersurface method it is proved with a slightly larger upper bound~\cite[Sections~5, 6]{Gromov:MetricInequalitiesScalar}.
However, until now, no simply connected examples have been shown to satisfy \cref{BandConjecture}  with any constant.
Note that the hypersurface method presupposes that \(\HZ^1(\pi_1 M; \Z) \neq 0\).

In the realm of spin manifolds, the most general known obstruction based on the Dirac operator is the Rosenberg index~\cite{Rosenberg:PSCNovikovI,Rosenberg:PSCNovikovII,Rosenberg:PSCNovikovIII}.
For a spin manifold \(M\) of dimension \(n-1\), it is an element \(\alpha(M) \in \KO_{n-1}(\Cstar \pi_1 M)\) of the real K-theory of the group \textCstar-algebra of its fundamental group.
Here one can use the maximal or the reduced completion of the real group ring.
Our main results apply to both cases, so we do not specify a choice.
More precisely, the Rosenberg index is the image of the fundamental class of \(M\) in spin bordism under the sequence of transformations,
\[
  \StolzBordism_{n-1}(M) \xrightarrow{\mathrm{ABS}} \KO_{n-1}(M) \xrightarrow{c} \KO_{n-1}(\Bfree \pi_1 M)  \xrightarrow{\nu} \KO_{n-1}(\Cstar \pi_1 M),
\]
where the first map is the Atiyah--Bott--Shapiro orientation, the second is induced by the classifying map of the universal covering, and the last is the analytic assembly map featuring in the strong Novikov conjecture.
If \(M\) is simply connected, then \(\alpha(M) \in \KO_{n-1}(\R) = \KO^{-n+1}\) reduces to the \(\alpha\)-invariant of \citeauthor{Hitchin:HarmonicSpinors}~\cite{Hitchin:HarmonicSpinors}.
For \(n-1 = 4k\), the latter identifies with the classical Atiyah--Singer index of the spinor Dirac operator on \(M\).

Our main result establishes \cref{BandConjecture} for all spin manifolds with non-vanishing Rosenberg index, albeit not with the conjecturally optimal upper bound.

\begin{thm}\label{BandTheorem}
	There exists a universal constant \(C < 8(1+\sqrt{2})\) such that the following holds.
  Let \(M\) be a closed spin manifold of dimension \(n-1\) with non-vanishing Rosenberg index \(\alpha(M) \in \KO_{n-1}(\Cstar\pi_1 M)\).
  Then every Riemannian manifold \(V\) which is diffeomorphic to \(M \times [-1,1]\) and has scalar curvature bounded below by \(\sigma > 0\) satisfies
	\[
		\width(V) = \dist(\partial_- V, \partial_{+}V) \leq \frac{C_n}{\sqrt{\sigma}},
  \]
  where \(C_n \coloneqq \sqrt{(n-1)/n} \cdot C < C\).
\end{thm}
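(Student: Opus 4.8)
\emph{Reduction to a complete manifold.} The plan is to argue by contradiction: suppose \(V\cong M\times[-1,1]\) carries a metric with \(\scal\ge\sigma>0\) and \(\width(V)=L>C_n/\sqrt{\sigma}\); I will deduce \(\alpha(M)=0\). To this end I would first attach half‑cylinders \(M\times(-\infty,-1]\) and \(M\times[1,\infty)\) with arbitrary product metrics, obtaining a complete spin manifold \(W\cong M\times\R\). Smoothing and truncating \(\dist(\cdot,\partial_- V)-L/2\) to \([-L/2,L/2]\) and extending it to be proper on the two cylinders yields a smooth function \(f\colon W\to\R\) with \(\lvert\nabla f\rvert\le 1\) that separates the two boundary components of \(V\), and by construction \(\scal_W\ge\sigma\) on \(f^{-1}([-L/2,L/2])\). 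In this form the band estimate is the codimension‑one counterpart of the quadratic‑decay statement in the abstract: it suffices to show that a complete spin \(W\) carrying a proper \(1\)-Lipschitz \(f\colon W\to\R\) whose generic fibre has nonzero Rosenberg index cannot satisfy \(\scal\ge\sigma\) on \(f^{-1}([-a,a])\) once \(a>\tfrac12 C_n/\sqrt{\sigma}\).

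\emph{The index obstruction.} Twist the real \(\mathrm{Cl}_n\)-linear spinor bundle of \(W\) by the flat Mishchenko bundle \(\Mishch\) of \(\pi_1 M\), giving a \(\Cstar\pi_1 M\)-linear Dirac operator \(\Dirac\), and modify it along \(f\): either by a Callias‑type potential \(B=\Dirac+\clm(e)\,(\psi\circ f)\) with an auxiliary Clifford generator \(\clm(e)\) and an odd increasing function \(\psi\), or — equivalently, after passing to \(\R\) — by additionally twisting with the \(C_0(\R)\)-Bott bundle pulled back along \(f\). Since \(\Mishch\) is flat, the Schrödinger--Lichnerowicz formula gives \(\Dirac^2=\nabla^*\nabla+\tfrac14\scal\), and the Bochner identity for \(B\) reads
\[
  B^2 \;\ge\; \nabla^*\nabla+\tfrac14\scal+(\psi\circ f)^2-\lvert\psi'\circ f\rvert\,\lvert\nabla f\rvert
  \;\ge\; \nabla^*\nabla+\tfrac{\sigma}{4}+(\psi\circ f)^2-\lvert\psi'\circ f\rvert
\]
on \(f^{-1}([-L/2,L/2])\), while near the ends the potential term alone bounds \(B^2\) below by a positive constant. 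Hence \(B\) is Fredholm, and a relative (Callias) index theorem identifies \(\ind(B)\), up to sign, with the Rosenberg index of the generic fibre of \(f\), which is \(\alpha(M)\).

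\emph{The analytic input and the main obstacle.} It then suffices to choose \(\psi\) (or the Bott data) so that the right‑hand side above is strictly positive everywhere; after absorbing the free scaling parameter this is a Riccati‑type inequality \(\psi'<\tfrac14\scal+\psi^2\) on \([-L/2,L/2]\) for an odd \(\psi\), solvable — with a rescaled tangent as extremal solution — precisely while the length of the interval stays below the relevant threshold. When such \(\psi\) exists, \(B\) is invertible, so \(\ind(B)=0\) and hence \(\alpha(M)=0\), contradicting the hypothesis and forcing \(L\le C_n/\sqrt{\sigma}\). The hard part is the middle step: making \(B\) and its index rigorous on the open manifold \(W\) — Fredholmness together with the relative index theorem in \(\KO\)-theory, with the attendant Clifford‑algebraic and real‑structure bookkeeping — and, above all, carrying this out with explicit constants. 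The honest route to an explicit bound replaces the sharp Callias potential by a coarser codimension‑one (respectively codimension‑two, for the \(M\times\R^2\) statement) construction whose auxiliary functions can be written by hand; the constant \(8(1+\sqrt{2})\), far from the conjectural \(2\pi\), is the price of that explicitness, and the factor \(\sqrt{(n-1)/n}\) enters through estimating the Clifford action of the extra curvature/potential term on spinors over an \(n\)-dimensional manifold.
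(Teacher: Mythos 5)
Your overall architecture coincides with the paper's: attach half-cylinders to \(\partial_\pm V\), produce a proper \(1\)-Lipschitz function separating the ends with the scalar curvature bound holding on its middle level band, twist by the Mishchenko bundle, and reduce everything to the assertion that a nonzero partitioned-manifold/Callias index forbids a long region with \(\scal \geq \sigma\) (this is exactly \cref{GeneralBandTheorem} fed by \cref{TechnicalTheorem}, with the index identification supplied by \cref{PartitionedManifoldTheorem}). Where you genuinely diverge is the quantitative core: you propose a variable potential \(\psi\circ f\) and the pointwise positivity condition \(\psi' < \tfrac14\scal + \psi^2\), which (correctly exploited) makes \(B^2\) bounded below \emph{globally} with no partition of unity at all; the paper instead uses the linear potential \(r\,x\), proves two separate lower bounds on \(x^{-1}(I)\) (via Friedrich's estimate, \cref{lem:Friedrich}) and on \(x^{-1}(\R\setminus(-d,d))\), and glues them with the cutoffs of \cref{InterpolatingFunctions}; that interpolation is precisely what produces the \(2/\sqrt{\delta}\) term in \(C = 4\min(1/(1-\delta)+2/\sqrt{\delta})\). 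Your Riccati route, carried out carefully, actually yields a better threshold (roughly \(2\pi/\sqrt{\sigma}\), the rescaled tangent being the extremal solution), which would imply the stated theorem with, say, \(C = 2\sqrt{2}\,\pi < 8(1+\sqrt 2)\).

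As written, however, the decisive step contains an error and stops short of a proof. The solvability direction is inverted: since \(\psi\) must climb from a large negative to a large positive value (to dominate the unknown scalar curvature on the attached cylinders, make \(B\) invertible at infinity, and give \(\operatorname{index}(B)=\alpha(M)\)), and since \(\psi' < \tfrac14\scal+\psi^2\) bounds the slope from \emph{above}, such a \(\psi\) exists precisely when the band is \emph{long} enough (length exceeding roughly \(2\pi/\sqrt{\sigma}\)), not ``while the length stays below the relevant threshold''; your final sentence only produces the contradiction if existence holds for \(L > C_n/\sqrt{\sigma}\), so until this is fixed and the threshold is actually computed, no explicit constant --- hence neither \(C<8(1+\sqrt2)\) nor the form \(C_n=\sqrt{(n-1)/n}\,C\) --- has been established. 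Two further points need repair: the factor \(\sqrt{(n-1)/n}\) in the paper comes from Friedrich's refinement of the Lichnerowicz bound (\cref{lem:Friedrich}), not from ``estimating the Clifford action of the potential term'', and transplanting it into your pointwise scheme is not automatic because \cref{lem:Friedrich} is an integrated estimate for sections supported in the good region (fortunately the theorem as stated does not require the factor if your constant is \(\leq 2\sqrt2\,\pi\)); and the index input you invoke --- a relative/Callias index theorem over a Real \textCstar-algebra for a \emph{bounded} odd potential --- is not what \cref{PartitionedManifoldTheorem} provides (it treats the unbounded proper potential \(x\)), so you must either prove that version or let \(\psi\) grow properly outside the band, which your sketch does not address.
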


As a consequence, this establishes \cref{BandConjecture} in the simply connected case:

\begin{cor}\label{CorSimplyConnected}
  There exists a universal constant \(C < 8(1+\sqrt{2})\) such that the following holds.
  Let \(M\) be a closed simply connected manifold of dimension \(n-1\geq 5\) which does not admit a metric of positive scalar curvature.
  Then every Riemannian manifold \(V\) which is diffeomorphic to \(M \times [-1,1]\) and has scalar curvature bounded below by \(\sigma > 0\) satisfies
	\[
		\width(V) = \dist(\partial_- V, \partial_{+}V) \leq \frac{C_n}{\sqrt{\sigma}},
  \]
  where \(C_n \coloneqq \sqrt{(n-1)/n} \cdot C < C\).
\end{cor}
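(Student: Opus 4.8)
\cref{CorSimplyConnected} should follow from \cref{BandTheorem} with no new analysis: the plan is to show that the hypotheses on $M$ already force it to be a closed spin manifold with non-vanishing Rosenberg index, and then to quote \cref{BandTheorem} verbatim. The only substantive external ingredient is the resolution of the Gromov--Lawson--Rosenberg conjecture for simply connected manifolds, due to Gromov--Lawson and Stolz, which is precisely what the hypothesis $n - 1 \geq 5$ is there to enable.

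In detail, I would proceed as follows. Assume $M$ is closed, simply connected, of dimension $n - 1 \geq 5$, and admits no metric of positive scalar curvature. Every simply connected non-spin manifold of dimension at least five carries a metric of positive scalar curvature, so $M$ must be spin. By Stolz's theorem, a simply connected spin manifold of dimension at least five admits a metric of positive scalar curvature if and only if its Hitchin $\alpha$-invariant $\alpha(M) \in \KO_{n-1}(\R)$ vanishes; hence here $\alpha(M) \neq 0$. Finally, since $\pi_1 M$ is trivial we have $\Cstar \pi_1 M = \R$, and the Rosenberg index $\alpha(M) \in \KO_{n-1}(\Cstar \pi_1 M) = \KO_{n-1}(\R)$ is exactly the Hitchin $\alpha$-invariant, as recalled in the introduction. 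Thus $M$ is a closed spin manifold of dimension $n - 1$ with non-vanishing Rosenberg index, and \cref{BandTheorem} gives, for any Riemannian manifold $V$ diffeomorphic to $M \times [-1,1]$ with scalar curvature bounded below by $\sigma > 0$, the bound $\width(V) = \dist(\partial_- V, \partial_{+}V) \leq C_n/\sqrt{\sigma}$ with $C_n = \sqrt{(n-1)/n}\cdot C$ and the same universal constant $C < 8(1 + \sqrt{2})$.

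I do not expect a genuine obstacle internal to this deduction: the analytic content lies entirely in \cref{BandTheorem}, and the topological content is imported wholesale. The single point deserving care is the \emph{direction} in which the curvature hypothesis is used --- we need that the absence of a metric of positive scalar curvature forces $\alpha(M) \neq 0$, equivalently, in contrapositive form, that vanishing of the $\alpha$-invariant of a high-dimensional simply connected spin manifold implies the existence of such a metric. That implication is the difficult, surgery-theoretic half of the simply connected Gromov--Lawson--Rosenberg conjecture (Stolz's theorem), and it is the one external input on which \cref{CorSimplyConnected} genuinely depends.
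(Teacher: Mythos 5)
Your proposal is correct and is exactly the paper's argument: $M$ is spin by Gromov--Lawson, has $\alpha(M)\neq 0$ by Stolz (with the Rosenberg index reducing to the $\alpha$-invariant since $\pi_1 M$ is trivial), and then \cref{BandTheorem} applies verbatim. No differences worth noting.
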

\begin{proof}
  If \(M\) is of dimension at least five, simply connected and does not admit a metric of positive scalar curvature, then \(M\) is spin by \citeauthor{GromovLawson:Classification}~\cite{GromovLawson:Classification} and subsequently \(\alpha(M) \neq 0 \in \KO^{-n+1}\) by \citeauthor{Stolz:SimplyConnected}~\cite{Stolz:SimplyConnected}.
  Hence the statement follows from \cref{BandTheorem}.
\end{proof}

In particular, this answers the question for exotic spheres which do not admit a metric of positive scalar curvature. This was specifically asked in \cite[p.~59, Question~58]{Gromov:101}.

\begin{ex}\label{ex:Exotic}
  \Cref{BandConjecture} holds for the \emph{Hitchin spheres}~\cite[44]{Hitchin:HarmonicSpinors}.
  That is, an exotic sphere \(\Sigma\) of dimension \(8k+j\), where \(j \in \{1,2\}\), which does not admit a metric of positive scalar curvature because \(0 \neq \alpha(\Sigma) \in \KO^{-8k-j} \cong \Z/2\).
Thus, whether or not an estimate as in \cref{BandConjecture} holds depends in general on the differential structure.
\end{ex}

Our main result also implies lower bounds on principal curvatures of certain immersed submanifolds, see \cref{AsymptoticPrincCurv} below.
For instance, this yields new lower bounds for codimension one immersions of the Hitchin spheres into the Euclidean unit ball.

Moreover, \cref{BandTheorem} applies to every (area-)enlargeable spin manifold (see~\cite{HankeSchick:Enlargeable,HankeSchick:EnlargeableInfinite}), and each aspherical spin manifold whose fundamental group satisfies the strong Novikov conjecture~\cite{Rosenberg:PSCNovikovI}.

\begin{rem}\label{rem:twistedIndex}
  We expect that it is possible to extend our methods to manifolds which do not admit a spin structure themselves but whose universal covering is spin.
  For this we would use the twisted versions of the group \textCstar-algebra and the Rosenberg index which Stolz introduced, see~\cite[Section~5]{RosenbergStolz:PSCSurgeryConnections}, \cite{Stolz98Concordance}.
\end{rem}

\begin{rem}
  More generally than \cref{CorSimplyConnected}, our result proves \cref{BandConjecture} for all spin manifolds which satisfy the \emph{unstable} Gromov--Lawson--Rosenberg conjecture.
  Recall that this conjecture asserts that a spin manifold of dimension \(\geq 5\) admits a metric of positive scalar curvature if and only if its Rosenberg index vanishes, see~\cite[Conjecture~4.8]{RosenbergStolz:PSCSurgeryConnections}.
  This could be generalized by asserting that the twisted Rosenberg index mentioned in \cref{rem:twistedIndex} is the only obstruction to positive scalar curvature for manifolds whose universal covering is spin, and that totally non-spin manifolds always admit a metric of positive scalar curvature.
  However, while this has been proved in specific cases, already for spin manifolds it is known to be false in general~\cite{Schick:Counterexample}.
  Thus, on the one hand, we cannot expect to prove \cref{BandConjecture} using only our theorem.
  On the other hand, the known counterexamples to the unstable Gromov--Lawson--Rosenberg conjecture are based on the minimal hypersurface technique and hence accessible to the methods of \cite{Gromov:MetricInequalitiesScalar}.
  This means that finding candidates for counterexamples to \cref{BandConjecture} will likely require completely new obstructions to positive scalar curvature.
\end{rem}

\begin{rem}\label{ConstantRemark}
	The precise constant that our proof yields is
	\[ C = 4 \min_{\delta \in (0,1)} \left( \frac{1}{1-\delta} + \frac{2}{\sqrt{\delta}}\right).\]	Setting \(\delta = 1/2\) yields the upper bound \(8(1+\sqrt{2}) \approx 19.31\).
  Numerically, the minimum is attained at \(\delta \approx 0.4503\) with \(C \approx 19.20\).
  It is an interesting question if our method can be optimized to yield a stronger upper bound.
\end{rem}

  Our method also works for topologically non-trivial proper bands, see \cref{GeneralBandTheorem} below.
  A band is a compact manifold \(V\) together with distinguished parts \(\partial_\pm V\) of its boundary.
  For the technical definition, see \cite[Section~2]{Gromov:MetricInequalitiesScalar} or \cref{sec:BandWidth} below.
  As a consequence, one can deduce a quadratic decay theorem for the scalar curvature on \(M \times \R^2\).
  In the following theorem, we strengthen this by combining our result with a construction of \citeauthor{HankePapeSchick:CodimensionTwoIndex}~\cite{HankePapeSchick:CodimensionTwoIndex}.
  We prove a quadratic decay estimate for the scalar curvature on complete spin manifolds in the presence of a suitable codimension two submanifold.

\begin{thm}[Quadratic decay for codimension two]\label{QuadraticDecay}
  Let \(X\) be an \(n\)-dimensional complete connected Riemannian spin manifold and \(M \subset X\) a closed connected submanifold of codimension two with trivial normal bundle.
  Assume that the inclusion induces an injection \(\pi_1 M \to \pi_1 X\) and a surjection \(\pi_2 M \to \pi_2 X\).
  Moreover, suppose that \(M\) has non-vanishing Rosenberg index \(\alpha(M) \in \KO_{\ast}(\Cstar \pi_1 M)\).
  Then for every base-point \(x_0 \in M\), there exists \(R_0 \geq 0\) such that
  \[
    \min \{ \scal_X(x) \mid x \in \Ball_R(x_0) \} \leq \frac{C_n^2}{(R-R_0)^2}
  \]
  for each \(R > R_0\), where \(\scal_X\) denotes the scalar curvature function of the metric on \(X\) and \(C_n = \sqrt{(n-1)/n} \cdot C\) with \(C < 8(1+\sqrt{2}) \) being the same constant as in \cref{ConstantRemark}.
\end{thm}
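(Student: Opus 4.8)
The plan is to deduce \cref{QuadraticDecay} from the proper-band estimate \cref{GeneralBandTheorem} by cutting a topologically non-trivial band that ``wraps around'' $M$ out of the ball $\Ball_R(x_0)$, and letting the codimension-two construction of \cite{HankePapeSchick:CodimensionTwoIndex} supply both the coefficient bundle of that band and the non-vanishing of the relevant relative index. (If $X = M \times \R^2$, no such input is needed: the region $M \times \{1 \le |z| \le \rho\}$ is literally a band $(M \times S^1) \times [1,\rho]$ to which \cref{GeneralBandTheorem} applies with coefficients in the Mishchenko bundle of $M \times S^1$. The role of \cite{HankePapeSchick:CodimensionTwoIndex} is exactly to produce an analogue of this band, and of its coefficient bundle, inside an arbitrary $X$.)

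\emph{The band.} Put $D \coloneqq \operatorname{diam}_X(M) < \infty$ and fix $\epsilon > 0$ small enough that $\{d_M \le \epsilon\}$ --- where $d_M \coloneqq \dist_X(\cdot, M)$ --- is a tubular neighbourhood of $M$, diffeomorphic to $M \times D^2$ (here we use that the normal bundle is trivial); then $\{d_M = \epsilon\} \cong M \times S^1$ and $d_M$ is smooth with $|\nabla d_M| = 1$ near this level set. Set $R_0 \coloneqq D + 2\epsilon$ and let $R > R_0$. By Sard's theorem choose a regular value $\rho$ of $d_M$ with $R - D - \epsilon < \rho < R - D$ (possibly replacing $d_M$ by a smooth approximation, which is routine), and put
\[
  V \coloneqq d_M^{-1}\bigl([\epsilon,\rho]\bigr), \qquad \partial_- V \coloneqq d_M^{-1}(\epsilon), \qquad \partial_+ V \coloneqq d_M^{-1}(\rho).
\]
Then $V$ is a compact band (compact by Hopf--Rinow; $\partial V = \partial_- V \sqcup \partial_+ V$) and is spin as a codimension-zero submanifold of $X$. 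Since $x_0 \in M$, any $x \in V$ satisfies $\dist_X(x, x_0) \le d_M(x) + D \le \rho + D < R$, so $V \subset \Ball_R(x_0)$; and since $d_M$ is $1$-Lipschitz, every path in $V$ from $\partial_- V$ to $\partial_+ V$ has length at least $\rho - \epsilon$, hence $\width(V) \ge \rho - \epsilon > R - D - 2\epsilon = R - R_0$.

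\emph{Coefficients and the index.} Next I would equip $V$ with the restriction of a bundle $\Mishch$ of finitely generated projective Hilbert $\Cstar\pi_1 M$-modules obtained, via the construction of \cite{HankePapeSchick:CodimensionTwoIndex}, from the triviality of the normal bundle together with the injectivity of $\pi_1 M \to \pi_1 X$ and the surjectivity of $\pi_2 M \to \pi_2 X$. Near $M$, this bundle is assembled from the Mishchenko bundle of $M$ and the normal-angle coordinate --- the latter being needed because the spin structure induced on the normal $S^1$ is the bounding one, so that the bare $\alpha$-invariant of the cross-section would vanish --- and the decisive output of \cite{HankePapeSchick:CodimensionTwoIndex} is that the two Clifford directions transverse to $M$ identify the relative (Callias/APS-type) index of the $\Mishch$-twisted spinor Dirac operator on $V$ with $\alpha(M)$ up to a natural isomorphism of $\KO$-groups. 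Since $\alpha(M) \neq 0$, this index is non-zero, which is exactly the hypothesis under which \cref{GeneralBandTheorem} applies to $(V, \Mishch|_V)$.

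\emph{Conclusion and the main difficulty.} If $\sigma \coloneqq \min_V \scal_X \le 0$ the claimed bound is trivial; otherwise \cref{GeneralBandTheorem} gives $\width(V) \le C_n / \sqrt{\sigma}$, so $\rho - \epsilon \le C_n/\sqrt{\sigma}$, i.e. $\sigma \le C_n^2/(\rho-\epsilon)^2$, and therefore
\[
  \min_{x \in \Ball_R(x_0)} \scal_X(x) \;\le\; \min_V \scal_X \;=\; \sigma \;\le\; \frac{C_n^2}{(\rho-\epsilon)^2} \;<\; \frac{C_n^2}{(R-R_0)^2},
\]
using $\rho - \epsilon > R - R_0$ from the first step. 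I expect the genuinely substantial point to be the coefficient/index step: one must verify that the codimension-two index construction of \cite{HankePapeSchick:CodimensionTwoIndex}, carried out there for closed ambient manifolds, is compatible with the relative Callias-type index that enters the proof of \cref{GeneralBandTheorem}, and that the transfer of $\alpha(M)$ across $X$ supplied by the $\pi_1$- and $\pi_2$-hypotheses genuinely preserves non-vanishing; by contrast the distance bookkeeping for $V$ above is elementary.
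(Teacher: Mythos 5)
Your geometric bookkeeping (the shell $V = d_M^{-1}([\epsilon,\rho])$, its width bound via the $1$-Lipschitz function $d_M$, and the final arithmetic) matches the paper's use of wide bands around the boundary of a tubular neighbourhood, but there is a genuine gap exactly at the step you flag as "substantial", and it is not merely a compatibility check with \cite{HankePapeSchick:CodimensionTwoIndex}: your band lives in $X$ itself, and in $X$ there is in general no flat Hilbert-module bundle over $V$ whose restriction to $\partial_- V \cong M \times \Sphere^1$ has non-vanishing Dirac index. \Cref{GeneralBandTheorem} needs a bundle that is flat \emph{over the whole band} (flatness enters the Lichnerowicz estimate in \cref{TechnicalTheorem}), and the only candidate, the Mishchenko bundle of $M \times \Sphere^1$, extends over the complement of the tube only if $\pi_1$ of that complement retracts onto $\pi_1(M\times\Sphere^1)$. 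The hypotheses give merely an injection $\pi_1 M \to \pi_1 X$, which is not enough for such a retraction in $X$. The paper's key manoeuvre, which your proposal omits, is to first pass to the covering $\bar X \to X$ with $\pi_1\bar X = \pi_1 M$; there the lifted inclusion $M \hookrightarrow \bar X$ is $2$-connected, so \cref{SplitMono} (the Hanke--Pape--Schick retraction lemma) yields $r\colon \pi_1 W \to \pi_1(M\times\Sphere^1)$ for $W = \bar X \setminus (\text{open tube})$, and $r$ is used to extend the Mishchenko bundle of $\partial W$ to a flat bundle $E = \tilde W \times_{\pi_1 W} \Cstar(\pi_1 M \times \Z)$ over all of $W$. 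The bands are then taken inside $W \subset \bar X$ (neighbourhoods of $\partial W$), and the estimate is transported back to $X$ at the end via the distance-non-increasing covering projection; this costs nothing since $\Ball_R(\bar x_0)$ maps into $\Ball_R(x_0)$.

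Your description of the index input is also not the mechanism actually used. There is no identification of a "relative Callias/APS-type index on $V$" with $\alpha(M)$; \cref{GeneralBandTheorem} only requires that the Dirac operator on $\partial_- V = M \times \Sphere^1$ twisted by the restricted flat bundle (i.e.\ by the Mishchenko bundle of $M\times\Sphere^1$) has non-zero index, and this is the statement $\alpha(M\times\Sphere^1) = \alpha(M) \times \alpha(\Sphere^1) \neq 0$, which follows from the product formula together with injectivity of exterior multiplication by $\alpha(\Sphere^1)$ proved in \cref{AppendixInj} (\cref{HypereuclideanInjective}, using that $\R$ is hypereuclidean); your remark about the bounding spin structure on the normal circle is precisely why one must work over $\Cstar(\pi_1 M \times \Z)$ rather than with the bare $\alpha$-invariant, but the non-vanishing still has to be, and is, established by this injectivity argument rather than assumed. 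Once you insert the covering-space passage, \cref{SplitMono}, the flat extension along $r$, and the product/injectivity argument, your distance estimates go through essentially as in the paper (with bands $V \subset W$ in place of your shells in $X$, and $R_0$ the diameter of the deleted tube).
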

\begin{ex}
  The manifold \(X = M \times \R^2\) with \(\alpha(M) \neq 0\) satisfies the hypotheses of the theorem.
\end{ex}
In particular, under the hypotheses of \cref{QuadraticDecay}, \(X\) does not admit a complete metric of uniformly positive scalar curvature.
But this already follows from the methods in \cite{HankePapeSchick:CodimensionTwoIndex}.
Hence our result can be viewed as a quantitative strengthening of the codimension two obstruction of \citeauthor{HankePapeSchick:CodimensionTwoIndex}.
The latter, in turn, was inspired by a theorem of \citeauthor{GromovLawson:PSCDiracComplete}~\cite[Theorem~7.5]{GromovLawson:PSCDiracComplete}.

Furthermore, we introduce the \emph{\(\mathcal{KO}\)-width} of a Riemannian manifold.
This is motiviated by similar notions which were introduced in \cite{Gromov:MetricInequalitiesScalar}.
Loosely speaking, the \(\mathcal{KO}\)-width of a manifold \(X\) is the supremum of widths of locally isometrically embedded bands which are spin and admit a flat bundle such that the twisted Dirac operator on the boundary components has non-vanishing index in real K-theory.
The precise definition is given in \cref{sec:KOWidth}.
Using this language, our main result implies the following.
\begin{thm}
  Let \(M\) be a closed manifold of infinite \(\mathcal{KO}\)-width.
  Then \(M\) does not admit a metric of positive scalar curvature.
\end{thm}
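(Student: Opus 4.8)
The plan is to argue by contradiction, feeding the definition of the $\mathcal{KO}$-width into the band width estimate for topologically non-trivial proper bands, \cref{GeneralBandTheorem}. Suppose $M$ admits a metric $g$ with $\scal_g > 0$. Since $M$ is closed, $\scal_g$ attains a positive minimum, so there is a constant $\sigma > 0$ with $\scal_g \geq \sigma$ everywhere on $M$.

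By the definition of infinite $\mathcal{KO}$-width (given in \cref{sec:KOWidth}), for every real number $L > 0$ there exists a band $V$ which is spin, carries a flat bundle of the relevant type such that the Dirac operator twisted by this bundle has non-vanishing index in real K-theory on the boundary components $\partial_\pm V$, together with a locally isometric embedding $\iota\colon V \hookrightarrow (M,g)$ such that the width of $V$, measured in the metric $\iota^\ast g$ that $\iota$ induces on $V$, satisfies $\width(V,\iota^\ast g) \geq L$. Since $\iota$ is a local isometry, $\scal_{\iota^\ast g} = \scal_g \circ \iota \geq \sigma$ on $V$. Thus $(V,\iota^\ast g)$ is a Riemannian band of exactly the type to which \cref{GeneralBandTheorem} applies, with scalar curvature bounded below by $\sigma$, and we conclude
\[
  L \leq \width(V,\iota^\ast g) \leq \frac{C_n}{\sqrt{\sigma}},
\]
where $n = \dim V = \dim M$ and $C_n = \sqrt{(n-1)/n}\cdot C$. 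As $L > 0$ was arbitrary while the right-hand side is a fixed finite number, this is a contradiction, so $M$ admits no metric of positive scalar curvature.

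The argument itself is just unwinding definitions; all the real content sits in \cref{GeneralBandTheorem}. Accordingly, the one point I expect to require genuine care is bookkeeping: checking that the index-theoretic nondegeneracy condition built into the definition of the $\mathcal{KO}$-width --- non-vanishing of the K-theoretic index of the boundary Dirac operator twisted by the flat bundle --- is precisely the hypothesis under which the coefficient-twisted band width estimate of \cref{GeneralBandTheorem} holds. In the case of the trivial bundle this is just the statement that the Rosenberg index $\alpha(\partial_\pm V)$ is non-zero, as in \cref{BandTheorem}; in general one needs the version with coefficients in a bundle of finitely generated projective Hilbert $C^*$-modules, and it is this compatibility, rather than the contradiction argument, that should be spelled out. (The degenerate cases, e.g.\ when $\dim M$ is too small to carry any admissible band, are vacuous, since then the $\mathcal{KO}$-width is not infinite.)
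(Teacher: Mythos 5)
Your proposal is correct and is essentially the paper's own argument: the paper states this theorem as a direct reformulation of \cref{GeneralBandTheorem}, obtained exactly as you do by noting that an immersed $\mathcal{KO}$-band inherits the scalar curvature bound $\sigma>0$ (attained on the closed manifold $M$), so its width is at most $C_n/\sqrt{\sigma}$, contradicting infinite $\mathcal{KO}$-width. The bookkeeping point you flag is indeed trivial, since the index condition in the definition of the class $\mathcal{KO}$ was chosen verbatim to match the hypothesis of \cref{GeneralBandTheorem}.
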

We observe that manifolds which satisfy the codimension two obstruction from \cite{HankePapeSchick:CodimensionTwoIndex} or the codimension one obstruction from \cite[Theorem~1.7]{Zeidler:IndexObstructionPositive} have infinite \(\mathcal{KO}\)-width.
It is a meta-conjecture of Schick~\cite[Conjecture~1.5]{Schick:ICM} that the Rosenberg index encompasses every obstruction to positive scalar curvature which is based on Dirac operator methods.
Hence we expect that for spin manifolds, infinite \(\mathcal{KO}\)-width implies the non-vanishing of the Rosenberg index.
This is the case in all the examples we mention and, as we explain in \cref{sec:KOWidth}, is implied by injectivity of the Baum--Connes assembly map via the \emph{stable} Gromov--Lawson--Rosenberg conjecture.
However, it remains an open question in general.

The article is structured as follows.
In \cref{sec:TechnicalTheorem}, we state and prove a technical theorem on which our results are based.
In \cref{sec:BandWidth}, we deduce the band width estimate and the quadratic decay theorem.
In \cref{sec:KOWidth}, we study the notion of \(\mathcal{KO}\)-width.
In the \cref{AppendixPMI,AppendixInj}, we exhibit index-theoretic results which are essentially known but not explicitly stated in the literature in the way we need them.

\paragraph{Acknowledgements.}
I would like to thank Johannes Ebert for valuable discussions, and Bernd Ammann as well as the anonymous referees for useful suggestions.

\section{The quantitative codimension one obstruction}\label{sec:TechnicalTheorem}
Our results are based on the technical \cref{TechnicalTheorem}.
It states that on a complete manifold over the real line, where the fibers admit an index-theoretic obstruction to positive scalar curvature, there is a universal scale-invariant upper bound on the length of each region with a positive lower bound on the scalar curvature.
Similar estimates as in the proof of \cref{TechnicalTheorem} are used in~\cite[Section~8]{EbertRandal-Williams:PSCCobordism} for a different purpose.

Start with a remark on our setup for real K-theory.
For technical reasons we work in the category of \emph{Real} \textCstar-algebras, that is, complex \textCstar-algebras together with an involutive conjugate linear \(\ast\)-automorphism.
This is equivalent to the category of \textCstar-algebras over the real numbers.
We refer to \cite{Schroeder:RealK} for a detailed exposition.
We will slightly abuse notation and use the symbol \enquote{\(\KO\)} for the K-theory groups of Real \(\textCstar\)-algebras.
Note that Real \textCstar-algebras also encompass spaces with an involution which feature in Atiyah's \(\mathrm{KR}\)-theory.
However, in this article we do not use involutions on the space level and KR-theory in a non-trivial way---the Real structures are only part of the coefficients.

We consider the following geometric setup.
Let \(W\) be a complete \(n\)-dimensional spin manifold together with a proper Lipschitz map \(x \colon W \to \R\).
Let \(A\) be some Real \textCstar-algebra and let \(E \to W\) be a smooth bundle of finitely generated projective Real Hilbert \(A\)-modules furnished with a metric connection.
Let \(\Dirac_{W,E}\) denote the spinor Dirac operator of \(W\) twisted by \(E\).
For expositions of the relevant background material about Dirac operators linear over \textCstar-algebras, we refer to~\cite{HankePapeSchick:CodimensionTwoIndex,Ebert:EllipticRegularityDirac}.
Associated to these data, there is the \emph{partitioned manifold index} which we will denote by \(\ind_{\PM}(\Dirac_{W, E}, x) \in \KO_{n-1}(A).\)
The partitioned manifold index theorem states the following.
If \(x\) is smooth near \(x^{-1}(a)\) for some \(a \in \R\) such that \(a\) is a regular value, then with \(M \coloneqq x^{-1}(a)\) we have the identity
\begin{equation}
	\ind(\Dirac_{M,E|_M}) = \ind_{\PM}(\Dirac_{W, E}, x) \in \KO_{n-1}(A).
  \label{eq:PMIT}
\end{equation}
We provide a quick definition of the partitioned manifold index which is suitable to our purposes.
In general, there are two approaches.
One is via the Roe algebra and the coarse index~\cite{Roe:IndexTheoryCoarseGeometry}.
The other, which we will use here, is to define it as an index of a certain Callias-type operator on the manifold \(W\) itself.
Indeed, if \(x\) is smooth with uniformly bounded gradient, then \(\ind_{\PM}(\Dirac_{W, E}, x)\) can be defined as the index of the unbounded regular Fredholm operator
\[
	B = \Dirac_{W, E} \tensgr 1 + r\ x\tensgr \epsilon,
\]
where \(\epsilon\) denotes left-multiplication by the Clifford generator of \(\Cl_{0,1}\) and  \(r > 0\) is an auxilliary constant that can be picked arbitrarily.
We consistenly work with \(\Cl_{n,0}\)-linear Dirac operators~\cite[Chapter~II,~§7]{LawsonMichelsohn:SpinGeometry}.
So \(B\) acts as an unbounded operator on \(\mathrm{L}^2(\SpinBdl_W \tensgr E \tensgr \Cl_{0,1})\), where \(\SpinBdl_W\) is the \(\Cl_{n,0}\)-linear spinor bundle, and is linear over the graded \textCstar-algebra  \(\Cl_{n,0} \tensgr A \tensgr \Cl_{0,1}\).
The operator \(B\) is indeed Fredholm because the corresponding Schrödinger-type operator
\[
  B^2 = \Dirac^2_{W,E} \tensgr 1 + r \clm(\D x) \tensgr \epsilon + r^2 x^2,
\]
is bounded below at infinity by the assumptions on \(x\).
Here \(\clm\) is the Clifford multiplication operator of the twisted Dirac bundle.
The symbol \enquote{\(\tensgr\)} refers to the graded tensor product which ensures that \(\Dirac_{W,E} \tensgr 1\) and \(1 \tensgr \epsilon\) anti-commute.
The index of \(B\) is then defined in \(\KO_0(\Cl_{n,0} \tensgr A \tensgr \Cl_{0,1}) \cong \KO_{n-1}(A)\).
For more detail on the index theorem behind \labelcref{eq:PMIT}, we refer to the Appendix~\labelcref{AppendixPMI}.

Moreover, note that the smoothness assumption on \(x\) is not really necessary.
It would be enough to have \(x\) in the Sobolev class \(\mathrm{W}^{1, \infty}_{\mathrm{loc}}\) with \(\nabla x \in \ELL^\infty\) to obtain a suitable operator \(B\).
However, in our technical arguments we only work with smooth \(x\) anyway to avoid having to discuss domain issues.
This is no restriction for our purposes because we can always approximate smoothly and the partitioned manifold index only depends on the coarse equivalence class of \(x\), see \cref{PartitionedManifoldTheorem}~\labelcref{item:coarseEq}.

After this preparation, we now state and prove our technical theorem.

\begin{thm}\label{TechnicalTheorem}
  For each \(n \in \N_{>0}\), set
  \[
    C \coloneqq 4 \cdot \min \left\{ \frac{1}{1-\delta} + \frac{2}{\sqrt{\delta}}\ \middle|\ {\delta \in (0,1)}\right\}, \quad C_n \coloneqq \sqrt{\frac{n-1}{n}}\ C.
  \]
	Let \(W\) be a complete Riemannian spin manifold of dimension \(n\).
	Let \(A\) be a unital Real \textCstar-algebra and \(E \to W\) a smooth bundle of finitely generated Real Hilbert \(A\)-modules endowed with a flat metric connection.
	Let \(x \colon W \to \R\) be a proper non-expanding \parensup{i.e.\ \(1\)-Lipschitz} map.
	Suppose that \(\ind_{\PM}(\Dirac_{W,E}, x) \neq 0\).
  Then for any interval \(I \subseteq \R\) such that the scalar curvature of \(W\) is bounded below by a contant \(\sigma > 0\) on \(x^{-1}(I)\), we have
	\[
		\operatorname{length}(I) \leq \frac{C_n}{\sqrt{\sigma}}
  \]
	\end{thm}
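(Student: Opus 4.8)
The plan is to derive a contradiction from the assumption that $\operatorname{length}(I) > C_n/\sqrt{\sigma}$ by constructing an auxiliary Dirac-type operator on $x^{-1}(I)$ (or a slight modification of $W$) whose square is strictly positive—hence invertible—while index theory forces its index to be nonzero. The nonvanishing of $\ind_{\PM}(\Dirac_{W,E}, x)$ is what supplies the nonzero index; the lower scalar curvature bound on the band $x^{-1}(I)$ is what we want to trade against its width.

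\medskip

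\textbf{Step 1: Rescale $x$ inside the band.} Write $I = [a_-, a_+]$ and $\ell = \operatorname{length}(I)$. I would replace $x$ by a new proper $1$-Lipschitz function $\varphi \colon W \to \R$ which agrees with $x$ (up to translation) outside $x^{-1}(I)$ but is \emph{compressed} on the band: on $x^{-1}(I)$, let $\varphi$ be an affine reparametrization of $x$ so that $\varphi$ maps the band onto an interval of length $\ell' < \ell$, with $|\nabla \varphi| \le \ell'/\ell < 1$ there. Since $\varphi$ is coarsely equivalent to $x$, the partitioned manifold index is unchanged: $\ind_{\PM}(\Dirac_{W,E}, \varphi) = \ind_{\PM}(\Dirac_{W,E}, x) \ne 0$ (using the coarse-invariance property alluded to after \labelcref{eq:PMIT}). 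Actually the cleaner device, following Gromov's philosophy and the Callias-operator picture, is to keep $x$ but allow a \emph{position-dependent weight} $r(t)$ multiplying the Clifford term: consider
\[
  B_\psi = \Dirac_{W,E} \tensgr 1 + \psi(x)\tensgr\epsilon
\]
for a suitable smooth $\psi \colon \R \to \R$ that is proper (so $B_\psi$ stays Fredholm), and whose derivative $\psi'$ is large precisely on $I$. The index of $B_\psi$ equals $\ind_{\PM}(\Dirac_{W,E},x)$ by a homotopy (rescaling $\psi$ to a linear function), so it is still nonzero.

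\medskip

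\textbf{Step 2: Write down the Bochner–Lichnerowicz–Callias formula.} Expanding $B_\psi^2$ as in the excerpt,
\[
  B_\psi^2 = \Dirac_{W,E}^2 \tensgr 1 + \psi'(x)\,\clm(\D x)\tensgr\epsilon + \psi(x)^2.
\]
Since $E$ is \emph{flat}, the twisted curvature term vanishes and the Schrödinger–Lichnerowicz formula gives $\Dirac_{W,E}^2 = \nabla^\ast\nabla + \tfrac14\scal_W$. The cross-term $\psi'(x)\,\clm(\D x)\tensgr\epsilon$ is a self-adjoint endomorphism; because $x$ is $1$-Lipschitz, $\|\clm(\D x)\| \le |\nabla x| \le 1$, so this term is bounded in operator norm by $|\psi'(x)|$ pointwise. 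Hence
\[
  B_\psi^2 \ge \tfrac14\scal_W - |\psi'(x)| + \psi(x)^2
\]
as an inequality of operators (dropping $\nabla^\ast\nabla \ge 0$). The goal is to choose $\psi$ so that the right-hand side is bounded below by a positive constant \emph{everywhere} on $W$. Away from the band we have freedom because we only need $\psi$ proper; on the band we have the hypothesis $\scal_W \ge \sigma$, so there we need $\tfrac\sigma4 + \psi(x)^2 > |\psi'(x)|$, i.e. we need a function on the interval $I$ of length $\ell$ whose derivative is controlled by $\tfrac\sigma4 + \psi^2$.

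\medskip

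\textbf{Step 3: Solve the ODE on the interval; this is where the constant $C_n$ comes from.} Normalizing, I want the longest interval on which one can have $|\psi'| \le c + \psi^2$ with $c>0$ small and $\psi$ going from $-\infty$ to $+\infty$ (properness), equivalently the \emph{shortest} such interval given the constraint—this is a $\tan$-type comparison: the extremal $\psi$ is $\psi(t) = \sqrt{c}\,\tan(\sqrt{c}\,t)$, which blows up over an interval of length $\pi/\sqrt{c}$. Matching $c = \sigma/4$ would give width $\le \pi/\sqrt{\sigma/4} = 2\pi/\sqrt\sigma$, the conjecturally optimal order but not with the constant in the theorem. The discrepancy—and the actual constant $C = 4\min_{\delta}\bigl(\tfrac1{1-\delta} + \tfrac2{\sqrt\delta}\bigr)$—must arise because one cannot use a genuinely singular $\psi$ (the operator would cease to be regular/Fredholm, or $\clm(\D x)$ being only bounded rather than a true Clifford generator costs a factor): instead one uses a $\psi$ that is \emph{linear with a large slope} on a sub-band of relative width $\delta$ and \emph{slowly increasing} on the two flanking pieces of relative width $(1-\delta)/2$ each, optimizing over the split $\delta$. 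Concretely: on the middle sub-band of length $\delta\ell$ take $\psi$ essentially constant $\approx$ large value $\lambda$ so that $\psi^2 \approx \lambda^2$ absorbs everything; on the outer pieces take $\psi' $ of size comparable to $\lambda$ to ramp $\psi$ up, needing length $\sim 1/\lambda \cdot (\text{something})$ and also needing $\lambda^2 \gtrsim$ the jump in $\psi'$; balancing the two constraints against the curvature floor $\sigma/4$ and the sphere-factor improvement $\sqrt{(n-1)/n}$ (which, as in Gromov's work, comes from using the refined Lichnerowicz estimate $\Dirac^2 \ge \nabla^\ast\nabla + \tfrac14\scal$ more carefully, or from a rearrangement on the normal direction) produces exactly $C_n/\sqrt\sigma$.

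\medskip

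\textbf{The main obstacle} I anticipate is making Step 3 rigorous while keeping $B_\psi$ an \emph{honest} unbounded regular Fredholm operator with a well-defined nonzero index in $\KO_{n-1}(A)$: one must choose $\psi$ smooth, proper, with $B_\psi^2 \ge \varepsilon > 0$ \emph{uniformly}, and then invoke the partitioned-manifold index theorem plus a homotopy argument (linear-homotoping $\psi$ to the defining linear function, staying Fredholm throughout) to conclude $\ind B_\psi = \ind_{\PM}(\Dirac_{W,E},x) \ne 0$—contradicting invertibility. The technical care is in (a) the homotopy invariance of the Callias index under proper perturbations of $\psi$, and (b) the bookkeeping that turns the ODE constraint into the precise minimization defining $C$. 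The factor $\sqrt{(n-1)/n}$ I would extract by noting that along the $x$-direction the relevant part of $\nabla^\ast\nabla$ contributes, or equivalently by the standard trick of tensoring with a round $S^1$ / using the sharp form of the Lichnerowicz inequality on the one-dimensional factor, exactly as Gromov does; I would cite \cite{Gromov:MetricInequalitiesScalar} for the shape of this improvement and verify it fits the Clifford-linear setting here.
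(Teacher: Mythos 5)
Your overall skeleton matches the paper's: form a Callias-type operator, show that under the length assumption it is invertible (hence has vanishing index), and contradict \(\ind_{\PM}(\Dirac_{W,E},x)\neq 0\). But the implementation you sketch has three concrete gaps, and the most serious one is the region \emph{outside} the band. Your pointwise inequality \(B_\psi^2 \ge \tfrac14\scal - |\psi'(x)| + \psi(x)^2\) carries the scalar curvature term everywhere, yet the hypothesis gives no control of \(\scal\) on \(x^{-1}(\R\setminus I)\), where it may be arbitrarily negative; the claim that ``away from the band we have freedom because we only need \(\psi\) proper'' is therefore unjustified as stated. You must either construct \(\psi\) growing fast enough (using properness of \(x\)) to dominate \(-\scal/4\) outside the band — an argument you do not give — or drop the curvature term outside and use only \(\Dirac^2\ge 0\) there, in which case you face exactly the localization problem your sketch omits: the Lichnerowicz bound is a quadratic-form statement, and combining ``Lichnerowicz inside, \(\Dirac^2\ge 0\) plus the potential outside'' requires cutting off with functions \(\psi_0,\psi_1\) and estimating the commutators \([\psi_i,\Dirac]\). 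This is what the paper does (\cref{InterpolatingFunctions} and the IMS-type computation leading to \cref{BoundedBelow}), with a \emph{linear} potential \(rx\), \(r=(1-\delta)\kappa\), \(\kappa = n\sigma/(4(n-1))\): the inside estimate uses \cref{lem:Friedrich}, the outside estimate uses only \(r^2x^2\ge r^2d^2\), and the gluing cost is precisely the \(2/\sqrt{\delta}\) term in \(C\).

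Two further gaps: first, the identification \(\ind(B_\psi)=\ind_{\PM}(\Dirac_{W,E},x)\) for a nonlinear potential is asserted ``by a homotopy (rescaling \(\psi\) to a linear function)'' but not proved; the invariance established in \cref{PartitionedManifoldTheorem}~\labelcref{item:coarseEq} only covers perturbations with \(x-\tilde{x}\in\ELL^\infty\), whereas your \(\psi(x)-x\) is unbounded, and applying part \labelcref{item:PMIT} to a common level set would require \(\psi\) to have bounded gradient, in tension with the growth needed outside the band. Second, Step~3 never actually derives the theorem's constant: your heuristic for \(C=4\min_\delta\bigl(\tfrac{1}{1-\delta}+\tfrac{2}{\sqrt\delta}\bigr)\) (a potential that is steep on a \(\delta\)-fraction of the band) matches neither the paper's mechanism — where \(\delta\) splits the budget \(\kappa\) between the Clifford cross term and the interpolation cost, and the potential is linear — nor what your own Riccati/\(\tan\) comparison would yield if completed; likewise the factor \(\sqrt{(n-1)/n}\) comes in the paper from Friedrich's inequality \(\|\Dirac u\|^2\le n\|\nabla u\|^2\) (\cref{lem:Friedrich}), not from a circle-tensoring trick you leave unverified. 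To be fair, the idea of a shaped (tan-like) potential with a pointwise positivity argument is a genuinely promising alternative and, carried out carefully, leads to bounds of a different (indeed better) shape than \(C_n\); but that only underscores that the bookkeeping connecting your scheme to the stated inequality is missing, so the proposal as written does not prove the theorem.
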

  \begin{rem}
    Under the same hypothesis \(\ind_{\PM}(\Dirac_{W,E}, x) \neq 0\), a much simpler estimate than in the proof of \cref{TechnicalTheorem} shows that the complete metric on \(W\) cannot have globally non-negative scalar curvature which is somewhere positive.
    This fact is essentially the content of \cite[Theorem~A]{Cecchini:CalliasTypePSC}.
    In contrast, the crucial point of our result is that a non-vanishing index excludes long regions with large scalar curvature regardless of what happens globally.
    In effect, this means that the completeness assumption is not relevant because we can always change the metric outside the region we care about to make it complete, and still obtain an estimate.
    In \cref{sec:BandWidth}, we exploit this by attaching complete cylinders to the boundary components in order to prove our main theorem.
  \end{rem}

	We start with technical preliminaries.

	\begin{lem}\label{InterpolatingFunctions}
  Let \(\varepsilon > 0\).
  There exist smooth functions \(\varphi_0, \varphi_1 \colon \R \to [0,1]\) such that
  \begin{itemize}
    \item \(\varphi_i(x) = i\) for \(x \leq 0\),
    \item \(\varphi_i(x) = 1-i\) for \(x \geq 1\),
    \item \(\|\varphi_i^\prime\|_\infty \leq \sqrt{2}+\varepsilon\),
    \item \(\|\varphi_0^2 + \varphi_1^2 - 1\|_\infty \leq \varepsilon\).
  \end{itemize}
\end{lem}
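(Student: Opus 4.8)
The plan is to look for $\varphi_0,\varphi_1$ of the form $\varphi_0=\sin\theta$, $\varphi_1=\cos\theta$ for a single smooth \emph{angle function} $\theta\colon\R\to[0,\pi/2]$ with $\theta\equiv 0$ on $(-\infty,0]$ and $\theta\equiv\pi/2$ on $[1,\infty)$. For such an ansatz the fourth condition holds \emph{exactly}, since $\varphi_0^2+\varphi_1^2\equiv 1$, and the boundary values and the range $[0,1]$ are automatic. The only real task is to arrange $\|\varphi_i'\|_\infty\le\sqrt 2+\varepsilon$, where $\varphi_0'=\theta'\cos\theta$ and $\varphi_1'=-\theta'\sin\theta$, while keeping $\theta$ genuinely $C^\infty$, in particular with all derivatives vanishing at $x=0$ and $x=1$.

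The numerical heart of the matter is the identity $\int_0^{\pi/2}\max(\cos\theta,\sin\theta)\,\D\theta=2\int_0^{\pi/4}\cos\theta\,\D\theta=\sqrt 2$. Indeed, since $\theta$ will be nondecreasing, both $|\varphi_0'|$ and $|\varphi_1'|$ are bounded by $\theta'\max(\cos\theta,\sin\theta)$; so if $\theta$ moved at the ``critical speed'' $\theta'(x)=c/\max(\cos\theta(x),\sin\theta(x))$, then $\|\varphi_i'\|_\infty\le c$, while $\theta$ would pass from $0$ to $\pi/2$ over an interval of length $\tfrac1c\int_0^{\pi/2}\max(\cos\theta,\sin\theta)\,\D\theta=\sqrt 2/c$. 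For $c=\sqrt 2$ this exactly fills $[0,1]$ (which is why the sharp value $\sqrt 2$ appears), and for $c=\sqrt 2+\varepsilon$ it leaves a definite amount of room inside $[0,1]$ — precisely the slack needed to smooth things at the two ends.

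To turn this heuristic into a $C^\infty$ construction I would build the inverse of the transition rather than solve an ODE, which also sidesteps the corner of $\max(\cos\theta,\sin\theta)$ at $\pi/4$. Given $\varepsilon>0$, first choose a smooth $m\colon[0,\pi/2]\to(0,\infty)$ with $m\ge\max(\cos\theta,\sin\theta)$ pointwise and $\int_0^{\pi/2}m\le\sqrt 2+\varepsilon/2$ (mollify the corner, add a tiny constant), and set $c\coloneqq\sqrt 2+\varepsilon$, so that $\tfrac1c\int_0^{\pi/2}m<1$. Next choose a smooth $P\colon(0,\pi/2)\to(0,\infty)$ with $P\ge m/c$ everywhere, $P\equiv m/c$ outside small neighbourhoods of the endpoints, $\int_0^{\pi/2}P=1$ (possible because of the strict inequality just noted), and with $P$ blowing up at $0$ of order $\bigl(\theta(\log(1/\theta))^2\bigr)^{-1}$ (resp.\ the mirror image as $\theta\to\pi/2$), which is still integrable. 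Put $X(\theta)\coloneqq\int_0^\theta P$, a strictly increasing continuous bijection $[0,\pi/2]\to[0,1]$ which is $C^\infty$ on the open interval, and let $\theta\colon\R\to[0,\pi/2]$ equal $0$ on $(-\infty,0]$, $X^{-1}$ on $[0,1]$, and $\pi/2$ on $[1,\infty)$. The prescribed blow-up of $P=X'$ makes $X^{-1}(x)$ flat to infinite order as $x\to0^+$ (one computes $X^{-1}(x)\sim e^{-1/x}$ there), and symmetrically as $x\to1^-$, so $\theta$ is $C^\infty$ on all of $\R$. Then $\varphi_0\coloneqq\sin\theta$ and $\varphi_1\coloneqq\cos\theta$ satisfy the first, second and fourth conditions on the nose, and for the third we estimate, wherever $\theta'\neq0$, that $|\varphi_i'|\le\theta'\max(\cos\theta,\sin\theta)=\max(\cos\theta,\sin\theta)/P(\theta)\le c\,\max(\cos\theta,\sin\theta)/m(\theta)\le c=\sqrt 2+\varepsilon$.

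The one genuinely delicate point is this last construction step: checking that a sufficiently strong but still integrable singularity of $X'$ at an endpoint produces an inverse function that glues $C^\infty$ to a constant. This is a standard fact about the flatness of functions of type $e^{-1/x}$, but it is the only place where any care is needed; the integral identity, the slope estimate, and the exact Pythagorean relation $\varphi_0^2+\varphi_1^2\equiv1$ are all immediate.
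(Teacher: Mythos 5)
Your construction is correct, and it is in essence a smoothed version of the same extremal profile the paper uses, but executed differently. The paper simply writes down the exact non-smooth solution \(\varphi_0(x)=\sqrt2\,x\) on \([0,\tfrac12]\), \(\varphi_0(x)=\sqrt{1-2(1-x)^2}\) on \([\tfrac12,1]\) (and its mirror for \(\varphi_1\)) --- which is precisely \(\sin\theta,\cos\theta\) for the ``critical speed'' angle \(\theta(x)=\arcsin(\sqrt2\,x)\) that your ODE heuristic produces --- and then smooths it at the three corner points, accepting an \(\varepsilon\)-perturbation of \emph{both} the derivative bound and the identity \(\varphi_0^2+\varphi_1^2=1\), with the smoothing details left to the reader. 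You instead keep the Pythagorean relation exact throughout by parametrizing through a single smooth angle function and spend the entire \(\varepsilon\)-slack on the slope, paying for it with the reparametrization \(P\) with prescribed logarithmic blow-up that makes \(\theta\) flat to infinite order at \(0\) and \(1\). Your route is more technical at the gluing step (though if you take \(P\) to be \emph{exactly} \(A/(\theta(\log(1/\theta))^2)\) near the endpoints, then \(\theta(x)=e^{-A/x}\) there and the \(C^\infty\) gluing is the textbook flat-function fact, so there is no real gap; the normalization \(\int_0^{\pi/2}P=1\) is then arranged by tuning \(A\) or the neighbourhood sizes), but it buys a slightly stronger conclusion, namely \(\varphi_0^2+\varphi_1^2\equiv1\) exactly, which would even simplify the application in the main theorem by removing the error term \(\rho\) in \(\psi_0^2+\psi_1^2=1+\rho\). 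Both arguments hinge on the same numerical fact, \(\int_0^{\pi/2}\max(\cos\theta,\sin\theta)\,d\theta=\sqrt2\), made explicit in yours and implicit in the paper's formulas.
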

\begin{proof}
  Taking \[
  \varphi_0(x)
  = \begin{cases}
      0 & x \leq 0 \\
      \sqrt{2} x & x \in \left[0, \frac{1}{2}\right] \\
      \sqrt{1 - 2 (1-x)^2} & x \in \left[\frac{1}{2}, 1\right] \\
      1 & x \geq 1
    \end{cases}, \quad
  \varphi_1(x)
  = \begin{cases}
    1 & x \leq 0 \\
    \sqrt{1-2x^2} & x \in \left[0, \frac{1}{2}\right] \\
    \sqrt{2}(1-x) & x \in \left[\frac{1}{2}, 1\right] \\
    1 & x \geq 1
  \end{cases}
  \]
  satisfies all conditions exactly (with \(\varepsilon = 0\)) except that these functions are not smooth at \(x = 0, \frac{1}{2}, 1\).
  This can be remedied by slightly changing the functions at the cost of slightly increasing the maximum of the derivative and slightly perturbing the identity \(\varphi_0^2 + \varphi_1^2 = 1\).
  The details are left to the reader.
\end{proof}

\begin{lem}\label{BoundedBelow}
	Let \(B\) be a self-adjoint unbounded regular operator on some Hilbert \(A\)-module.
	Let \(c > 0\).
  If for every element \(u\) in the domain of \(B^2\), we have \(\|B u\| \geq c \|u\|\), then \(B\) is invertible.
\end{lem}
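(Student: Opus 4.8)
The plan is to reduce the statement to the continuous functional calculus for self-adjoint regular operators, using the basic fact that such an operator $B$ is invertible in $\mathcal{L}(H)$ (the $\ast$-algebra of adjointable operators on the ambient Hilbert $A$-module $H$) if and only if $0$ is not in its spectrum $\sigma(B) \subseteq \R$. First I would recall the ingredients: there is a $\ast$-homomorphism $C_0(\R) \to \mathcal{L}(H)$, $f \mapsto f(B)$, which is isometric for the supremum norm over $\sigma(B)$, i.e.\ $\|f(B)\| = \sup_{t \in \sigma(B)} |f(t)|$; for $f \in C_c(\R)$ one has $f(B) H \subseteq \operatorname{dom}(B^2)$ with $B\, f(B) = (t f(t))(B)$; and $0 \notin \sigma(B)$ is equivalent to invertibility of $B$. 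All of this is standard, for instance via the bounded transform $B(1+B^2)^{-1/2}$; see \cite{Ebert:EllipticRegularityDirac, HankePapeSchick:CodimensionTwoIndex}.

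The heart of the argument is a contradiction. Suppose $0 \in \sigma(B)$. Given a small $\varepsilon > 0$, I would pick $f \in C_c(\R)$ with $0 \le f \le 1$, $f(0) = 1$ and $\operatorname{supp}(f) \subseteq [-\varepsilon, \varepsilon]$. Since $0 \in \sigma(B)$ and $f(0) = 1$, the isometry property gives $\|f(B)\| = 1$, so there is $w \in H$ with $\|w\| \le 1$ and $\|f(B) w\| \ge \tfrac12$. Put $u \coloneqq f(B) w$. Then $u$ lies in the domain of $B^2$, $\|u\| \ge \tfrac12$, and $B u = h(B) w$ with $h(t) = t f(t)$ satisfying $|h| \le \varepsilon$ everywhere, so $\|B u\| \le \varepsilon$. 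The hypothesis $\|B u\| \ge c \|u\|$ then forces $\tfrac{c}{2} \le \varepsilon$, which fails once $\varepsilon < \tfrac{c}{2}$. Hence $0 \notin \sigma(B)$ and $B$ is invertible.

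The step I expect to require the most care is invoking the functional calculus in precisely the right form over a general Hilbert $A$-module: that $f(B)$ for compactly supported $f$ genuinely maps into $\operatorname{dom}(B^2)$ and intertwines with multiplication by $t$, and that $\|f(B)\| = \sup_{\sigma(B)} |f|$, so that $f(B) \ne 0$ whenever $f$ does not vanish on $\sigma(B)$. These facts are standard, but should be cited with care. If one prefers to minimize the functional-calculus input, there is an alternative route: first note that $\operatorname{dom}(B^2)$ is a core for $B$ (e.g.\ using $n^2(n^2 + B^2)^{-1} \to \operatorname{id}$ strongly), so the estimate extends to all of $\operatorname{dom}(B)$, whence $B$ is injective with closed range; applying the estimate to $T = B^2 \ge 0$ yields $\|T u\| \ge c^2 \|u\|$, and combining with positivity of $T$ one obtains $\|(T + \lambda)^{-1}\| \le c^{-2}$ uniformly in $\lambda > 0$; letting $\lambda \to 0^+$ through the resolvent identity produces a bounded inverse of $B^2$, and hence of $B$. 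In both approaches the analytic content is elementary; the only real work is the $C^\ast$-algebraic bookkeeping.
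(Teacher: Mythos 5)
Your argument is correct, but it takes a different route than the paper. The paper's proof never argues at the level of $B$ directly: it first applies Cauchy--Schwarz, $\|B^2u\|\,\|u\| \geq \|\langle B^2u \mid u\rangle\| = \|Bu\|^2 \geq c^2\|u\|^2$, to convert the hypothesis into the bound $\|B^2u\| \geq c^2\|u\|$ on $\operatorname{dom}(B^2)$ --- which is now the \emph{full} domain of the self-adjoint regular operator $B^2$ --- and then invokes the argument of \cite[Proposition~1.21]{Ebert:EllipticRegularityDirac} to conclude $0 \notin \sigma(B^2)$, hence $0 \notin \sigma(B)$. You instead handle the domain restriction by a different device: testing the hypothesis only on vectors $u = f(B)w$ with $f \in C_c(\R)$, which automatically lie in $\operatorname{dom}(B^2)$, and running a bump-function contradiction at a putative spectral point $0$. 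This is a clean and valid alternative; its one real input is the spectral faithfulness of the functional calculus for regular self-adjoint operators on Hilbert modules, $\|f(B)\| = \sup_{\sigma(B)}|f|$ (you only need the lower bound, and only at the point $0$), which is true and available in the references you name, but note that this fact is essentially of the same depth as the lemma itself --- it is what the cited Proposition of Ebert packages. If you want to avoid it, observe that your inequality can be iterated: for $f$ supported in $(-c',c')$ with $c'<c$ one gets $\|f(B)w\| \leq c^{-k}\|(t^k f)(B)w\| \leq (c'/c)^k\|f\|_\infty\|w\| \to 0$, so $f(B)=0$ and $(1-f(t))/t \in C_0(\R)$ yields a bounded inverse, using only the sup-norm \emph{upper} bound. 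Your second sketch (pass to $T=B^2$, uniform resolvent bounds $\|(T+\lambda)^{-1}\| \leq c^{-2}$, let $\lambda \to 0$) is essentially the paper's route with the citation replaced by a hands-on resolvent argument, and also works, granting the standard fact that $\lambda + B^2$ is invertible for $\lambda>0$ (which follows from regularity of $B/\sqrt{\lambda}$). In short: the paper buys brevity by reducing to $B^2$ and outsourcing the spectral step; your main route trades that for a self-contained argument at the level of $B$, at the cost of a slightly stronger functional-calculus statement that should be cited or proved with the care you anticipate.
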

\begin{proof}
	Note that \(\|B^2 u\| \|u\| \geq \|\langle B^2 u \mid u \rangle\| = \| \langle B u \mid B u \rangle \| = \|Bu\|^2 \geq c^2 \|u\|^2\).
	Thus \(\|B^2 u\| \geq c^2 \|u\|\) for every element \(u\) in the domain of \(B^2\).
	Therefore, as in the proof of \cite[Proposition~1.21]{Ebert:EllipticRegularityDirac} it follows that \(0\) does not lie in the spectrum of \(B^2\).
  Thus \(0\) also does not lie in the spectrum of \(B\).
\end{proof}

Our argument will of course rely on the Schrödinger--Lichnerowicz formula.
However, in order to obtain the factor of \(\sqrt{\frac{n-1}{n}}\) in our constant, we need a slight strengthening of the naive estimate. 
The following lemma is a well-known observation which goes back to back to Friedrich~\cite{Friedrich:DiracEigenwert}.
See~\cite[Chapter~5]{SpinorialApproach} for a textbook treatment.

\begin{lem}\label{lem:Friedrich}
  Let \(W\) and \(E\) be as in the statement of of \cref{TechnicalTheorem}.
  Let \(u\) be in the domain of \(\Dirac_{W,E}^2\) and \(\sigma \in \R\) be such that the scalar curvature of \(W\) satisfies \(\scal(x) \geq \sigma\) for all \(x \in \supp(u)\).
  Then we have the estimate
  \[
    \langle \Dirac_{W,E}^2\ u \mid u \rangle \geq \frac{n \sigma}{4(n-1)} \left\langle u\ \middle|\ u \right\rangle.
  \]
\end{lem}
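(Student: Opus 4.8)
The plan is to combine the Schrödinger--Lichnerowicz formula with the sharp twistor (Penrose) estimate, which is exactly Friedrich's refinement. First I would invoke the Bochner--Lichnerowicz--Weitzenböck identity for the \(\Cl_{n,0}\)-linear Dirac operator twisted by \(E\), namely \(\Dirac_{W,E}^2 = \nabla^\ast\nabla + \tfrac14\scal + \CurvOp^E\), where \(\CurvOp^E\) is the zeroth-order curvature term built from the connection on \(E\). Since that connection is flat by hypothesis, \(\CurvOp^E = 0\), and hence for \(u\) in the domain of \(\Dirac_{W,E}^2\), after integrating by parts,
\[
  \langle \Dirac_{W,E}^2\, u \mid u \rangle = \langle \nabla u \mid \nabla u \rangle + \tfrac14 \langle \scal\cdot u \mid u \rangle .
\]
As \(\scal \geq \sigma\) on \(\supp(u)\), the last term is at least \(\tfrac{\sigma}{4}\langle u \mid u\rangle\); the whole improvement comes from also bounding the Bochner term \(\langle \nabla u \mid \nabla u\rangle\) below by a multiple of \(\langle \Dirac_{W,E}^2 u\mid u\rangle\).

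For that I would use the pointwise orthogonal decomposition of the twisted bundle \(T^\ast W \otimes \SpinBdl_W \tensgr E\) into the kernel of the Clifford contraction \(\mu\) (the twistor bundle) and its orthogonal complement, the latter being the image of the map \(s \mapsto -\tfrac1n \sum_i e_i^\ast \otimes \clm(e_i)\,s\) for a local orthonormal frame \((e_i)\). Applied to \(\Phi = \nabla u\), for which \(\mu(\nabla u) = \Dirac_{W,E}u\), this exhibits the Penrose/twistor operator \(\mathcal P_X u = \nabla_X u + \tfrac1n \clm(X)\,\Dirac_{W,E}u\) as the twistor part of \(\nabla u\), together with the pointwise identity \(|\nabla u|^2 = |\mathcal P u|^2 + \tfrac1n |\Dirac_{W,E}u|^2\). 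Integrating gives \(\langle \nabla u \mid \nabla u\rangle \geq \tfrac1n \langle \Dirac_{W,E}^2 u \mid u\rangle\), and substituting into the displayed identity yields \(\tfrac{n-1}{n}\langle \Dirac_{W,E}^2 u\mid u\rangle \geq \tfrac{\sigma}{4}\langle u\mid u\rangle\), which rearranges to the claimed bound \(\langle \Dirac_{W,E}^2 u \mid u\rangle \geq \tfrac{n\sigma}{4(n-1)}\langle u\mid u\rangle\).

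The only point requiring genuine care is that \(u\) is merely assumed to lie in the domain of \(\Dirac_{W,E}^2\), so a priori it is neither smooth nor compactly supported, whereas both the integration by parts and the twistor decomposition are statements about smooth sections. I would deal with this in the standard way: elliptic regularity for \(\Dirac_{W,E}\) (as in the analytic references cited above) gives local smoothness of \(u\), and a cut-off and approximation argument, using that \(\nabla u\) and \(\Dirac_{W,E}u\) lie in \(\mathrm L^2\), transports the pointwise identities to the global level; since the cut-offs can be taken supported near \(\supp(u)\), the scalar-curvature hypothesis is not disturbed. This analytic bookkeeping, rather than any geometric subtlety, is the main obstacle, and it is precisely what the cited treatments of the spinorial approach and of elliptic regularity for \(A\)-linear Dirac operators are designed to handle; in fact, since the lemma is classical, one could legitimately shorten the write-up to the two-line computation above plus a pointer to those references.
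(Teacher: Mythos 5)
Your argument is correct and takes essentially the same route as the paper: both combine the Schrödinger--Lichnerowicz formula (with no curvature term, since \(E\) is flat) with the pointwise estimate \(|\Dirac_{W,E}u|^2 \leq n\,|\nabla u|^2\), which the paper obtains by Cauchy--Schwarz with a reference to the textbook computation and you obtain via the twistor/Penrose decomposition of \(\nabla u\) --- the same inequality, with your decomposition merely identifying the Cauchy--Schwarz deficit as the twistor part. Your closing remarks on regularity and approximation are sound but go beyond the paper, which simply works with \(u\) in the domain of \(\Dirac_{W,E}^2\) within the Hilbert-module elliptic regularity framework it cites.
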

\begin{proof}
  For brevity set \(\Dirac \coloneqq \Dirac_{W,E}\).
  A computation involving the Cauchy--Schwarz inequality shows that we have the inequality
  \(
    \langle \Dirac u \mid \Dirac u \rangle \leq n \langle \nabla u \mid \nabla u \rangle.
  \)
  In fact, the latter even holds pointwise before integrating over \(W\), see \cite[p.~130, (5.7)]{SpinorialApproach} for the details of the computation.
  Since \(E\) is flat, we have the Schrödinger--Lichnerowicz formula \(\Dirac^2 = \nabla^\ast \nabla + \scal / 4\), and so
  \begin{align*}
    \langle \Dirac^2 u \mid u \rangle &\geq \langle \nabla^\ast \nabla u \mid u \rangle + \frac{\sigma}{4} \langle  u \mid u \rangle \\
   &= \langle \nabla u \mid \nabla u \rangle + \frac{\sigma}{4} \langle u \mid u \rangle  \\
   &\geq \frac{1}{n} \langle \Dirac u \mid \Dirac u \rangle + \frac{\sigma}{4} \langle u \mid u \rangle = \frac{1}{n} \langle \Dirac^2 u \mid u \rangle + \frac{\sigma}{4} \langle u \mid u \rangle.
  \end{align*}
  Hence \((1-1/n) \langle \Dirac^2 u \mid u \rangle \geq (\sigma / 4) \langle u \mid u \rangle\).
\end{proof}

We are now ready for the proof of the main technical theorem.

\begin{proof}[Proof of \cref{TechnicalTheorem}]
	First observe that by composing with a translation on \(\R\), we can always assume that \(I = [-l,l]\) with \(2l = \operatorname{length}(I)\).

  We then argue that it suffices to consider the case that \(x\) is smooth.
  Indeed, for each \(\varepsilon > 0\), there exists a smooth function \(\tilde{x}_\varepsilon\) such that \(\|x-\tilde{x}_\varepsilon\|_\infty < \varepsilon\) and \(\|\nabla x_{\varepsilon}\|_\infty < 1 + \varepsilon\), see \cite[Proposition 2.1]{GreeneWu:Approximations}.
  Then \(x_\varepsilon = (1+\varepsilon)^{-1}\tilde{x}_\varepsilon\) satisfies \(\|\nabla x_\varepsilon\|_\infty \leq 1\).
  Moreover, setting \(l_\varepsilon = (l-\varepsilon)/(1+\varepsilon)\), we have \(x_\varepsilon^{-1}([-l_\varepsilon, l_\varepsilon]) \subseteq x^{-1}([-l,l])\).
  If the theorem holds for each \(x_\varepsilon\), it follows that \(2 l_\varepsilon \leq C_n / \sqrt{\sigma}\) for every \(\varepsilon >0\), and so letting \(\varepsilon \to 0\) we deduce \(2 l \leq C_n / \sqrt{\sigma}\).

  So, now we assume that \(x\) is smooth with \(\|\nabla x\|_\infty \leq 1\).
  To prove the theorem, we argue by contraposition.
  Suppose that there exists \(\sigma > 0\) and an interval \(I = [-l, l] \subseteq \R\) with \(2l = \operatorname{length}(I) > {C_n}/\sqrt{\sigma}\) such that the scalar curvature is bounded below by \(\sigma\) on \(x^{-1}(I)\).
	Then fix \(\delta \in (0,1)\) such that \(C = 4 ( {1}/{(1-\delta)} + {2}/{\sqrt{\delta}})\).
  We set 
  \[ \kappa \coloneqq \frac{n \sigma}{4(n-1)}, \quad r \coloneqq \kappa(1 -\delta)> 0.\]
	In the following we write \(\Dirac = \Dirac_{W,E}\) for brevity.
	Then \(\ind_{\PM}(\Dirac, x)\) is equal to the index of the operator \(B = \Dirac \tensgr 1  + r x\ 1 \tensgr \epsilon\) which acts on \(\Ltwo(\SpinBdl_W \tensgr E \tensgr  \Cl_{0,1})\).
	We have
	\begin{equation}
		B^2 = \Dirac^2 \tensgr 1 + r \clm(\D x) \tensgr \epsilon + r^2 x^2. \label{eq:BSquared}
	\end{equation}
  We will prove that the operator \(B\) is invertible and thus has vanishing index.
  The rough idea is to estimate \(B^2\) separately on the regions \(x^{-1}(I)\) and \(x^{-1}(\R \setminus I)\).
  On the former, we will use our scalar curvature bound and \cref{lem:Friedrich}.
  On the latter, we will use \labelcref{eq:BSquared} and the fact that \(\Dirac^2\) is a non-negative operator.
  From this, we will deduce that \(B\) is bounded from below by a positive constant on each region.
  Then we use an interpolation with a suitable partition of unity to see that \(B\) is globally bounded from below by a positive constant.

 First, consider an element \(u\) in the domain of \(B^2\) such that \(\supp(u) \subseteq x^{-1}(I)\).
	Using \(x^2 \geq 0\), we obtain from \labelcref{eq:BSquared},
   \[
     \langle B^2 u \mid u \rangle \geq \langle \Dirac^2 \tensgr 1\ u \mid u \rangle + r\ \langle (\clm(\D x) \tensgr \epsilon) u \mid u \rangle \geq \langle \Dirac^2 \tensgr 1\ u \mid u \rangle- r |u|^2.
  \] 
  Here we used the notation \(|u| \coloneqq \langle u \mid u \rangle^{\frac{1}{2}} \in {\mathcal{A}}_+\), where \(\mathcal{A} \coloneqq \Cl_{n,0} \tensgr A \tensgr \Cl_{0,1}\) is our coefficient-\textCstar-algebra.
  We also used the estimate \(-\|T\| |u|^2 \leq \langle T u \mid u \rangle \) for a self-adjoint operator \(T\).
  Furthermore, the scalar curvature bound on \(x^{-1}(I)\) and \cref{lem:Friedrich} imply \(\langle \Dirac \tensgr 1\ u \mid u \rangle \geq \kappa |u|^2\) and hence
	\begin{equation}\label{eq:InsideEstimate}
    \langle B^2 u \mid u \rangle \geq \kappa |u|^2 - r |u|^2
			= \left( \kappa - r \right) |u|^2 = \delta \kappa |u|^2.
	\end{equation}

	Second, consider the case that \(\supp(u) \subseteq x^{-1}(\R \setminus (-d,d))\) for some \(0 < d \leq l\).
	Then, using \labelcref{eq:BSquared}, \(\langle B^2 u \mid u \rangle \geq r\ \langle (\clm(\D x) \tensgr \epsilon) u \mid u \rangle + r^2 d^2 |u|^2\) and hence
	\begin{equation}\label{eq:OutsideEstimate}
		 \langle B^2 u \mid u\rangle \geq -r |u|^2 + r^2 d^2 |u|^2 = \left(r^2 d^2 - r \right) |u|^2.
	\end{equation}
  We choose \(d \coloneqq (\sqrt{\kappa}(1 - \delta))^{-1}\) so that \(r^2 d^2 -r = \delta \kappa\).
  Using the assumption \(2l > C_n/\sqrt{\sigma} = C / (2 \sqrt{\kappa})\) and the definition of \(C\), we obtain \(l-d > 2 /\sqrt{\kappa \delta}\) and consequently \(\sqrt{2}/{(l-d)} < \sqrt{{\kappa \delta}/{2}}\).

  Now we will combine the estimates \labelcref{eq:InsideEstimate,eq:OutsideEstimate} to obtain a global lower bound.
  To this end, we fix a constant \(\lambda\) with
  \({\sqrt{2}}/{(l-d)} < \lambda < \sqrt{{\kappa \delta}/{2}}\).
  \Cref{InterpolatingFunctions} implies that for each \(\varepsilon > 0\) there exist smooth functions \(\psi_0, \psi_1 \colon \R \to [0,1]\) such that \(\supp(\psi_0) \subseteq [-l,l]\), \(\supp(\psi_1) \subseteq \R \setminus (-d,d)\), \(\|\psi_i^\prime\|_\infty \leq \lambda\) and \(\psi_0^2 + \psi_1^2 = 1 + \rho\) with \(\|\rho\|_\infty < \varepsilon\), see \cref{InterpolatingFigure}.
  \begin{figure}
    \begin{center}
    \begin{tikzpicture}[scale=0.9]
      \coordinate (O) at (0,0);
      \coordinate (Ou) at ($(O) + (0,1.5)$);
      \coordinate (Eu) at ($(O) + (0,2.5)$);
      \coordinate (D) at (3,0);
      \coordinate (L) at (5,0);
      \coordinate (E) at (6,0);
      \coordinate (Eg) at (5.7,0);
      \coordinate (d) at ($(O)-(D)$);
      \coordinate (l) at ($(O)-(L)$);
      \coordinate (e) at ($(O)-(E)$);
      \coordinate (eg) at ($(O)-(Eg)$);
      \draw[->] (e) -- (E) node[right] {\(x\)};
      \draw[->] (O) -- (Eu);
      \draw[red, thick, dotted] (e) -- (eg);
      \draw[red, thick] (eg) -- (l) sin ($(d)+(Ou)$) node[above] {\(\psi_0\)} -- ($(D)+(Ou)$) cos (L) -- (Eg);
      \draw[red, thick, dotted] (Eg) -- (E);
      \draw[blue, thick, dotted] ($(e) + (Ou)$) -- ($(eg) + (Ou)$);
      \draw[blue, thick] ($(eg) + (Ou)$) -- ($(l)+(Ou)$) node[above] {\(\psi_1\)} cos (d) -- (D) sin ($(L)+(Ou)$) -- ($(Eg) + (Ou)$);
      \draw[blue, thick, dotted] ($(Eg) + (Ou)$) -- ($(E) + (Ou)$);
      \node[below] at (l) {\(-l\)};
      \node[below] at (d) {\(-d\)};
      \node[below] at (L) {\(l\)};
      \node[below] at (D) {\(d\)};
      \node[above right] at (O) {\(0\)};
      \node[above right] at (Ou) {\(1\)};
      \node[above] at ($(D) !.5! (L) + (Ou)$) {\(\|\nabla \psi_i\|_\infty \lessapprox \sqrt{2} \cdot \frac{1}{l-d}\)};
    \end{tikzpicture}
    \end{center}
    \caption{Schematic plot of the interpolating functions \(\psi_i\)}
    \label{InterpolatingFigure}
  \end{figure}
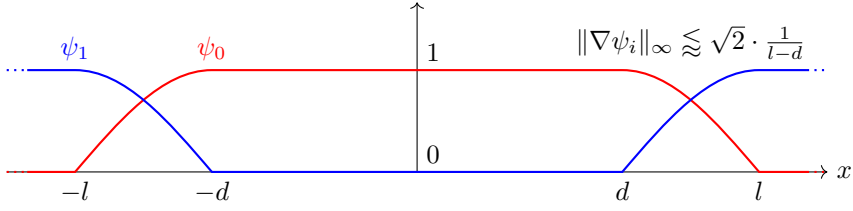
  By a slight abuse of notation, we will also write \(\psi_i\) and \(\rho\) for the functions  on \(W\) defined by \(\psi_i \circ x\) and \(\rho \circ x\), respectively.
  Then
  \begin{align}
    \langle B^2 u \mid u \rangle &= \langle B u \mid B u \rangle = \langle \psi_0^2 B u \mid B u \rangle + \langle \psi_1^2 B u \mid B u \rangle - \langle \rho B u \mid B u\rangle \nonumber \\
    &= | \psi_0 B u |^2 + | \psi_1 B u |^2 - \langle \rho B u \mid B u\rangle. \label{eq:Separation}
  \end{align}
  In the following, we estimate \(|\psi_i B u |^2\) using that \(\psi_i B = [\psi_i, \Dirac] \tensgr~1 + B \psi_i\).
  \begin{align*}
    | \psi_i B u |^2 &= \langle [\psi_i, \Dirac] \tensgr 1\ u \mid \psi_i B u \rangle
      + \langle B \psi_i u \mid \psi_i B u \rangle \\
      &= \langle [\psi_i, \Dirac] \tensgr 1\
       u \mid \psi_i B u \rangle
        + \langle B \psi_i u \mid [\psi_i, \Dirac] \tensgr 1\ u \rangle
        + \langle B \psi_i u \mid  B \psi_i u \rangle \\
        &= \langle [\psi_i, \Dirac]\tensgr 1\ u \mid \psi_i B u \rangle
          + \langle \psi_i B  u \mid [\psi_i, \Dirac] \tensgr 1\ u \rangle\\
          &\qquad+ \langle [\Dirac, \psi_i] \tensgr 1\ u \mid [\psi_i, \Dirac] \tensgr 1\ u \rangle
          + \langle B \psi_i u \mid  B \psi_i u \rangle \\
        &\geq - 2 \lambda \|u\| \|\psi_i B u\| - \lambda^2 |u|^2 +  | B \psi_i u |^2\\
        &\geq | B \psi_i u |^2 - \lambda^2 |u|^2 - 2 \lambda \|u\| \|B u\|.
  \end{align*}
  Here we used that \(\|[\psi_i, \Dirac]\| \leq \|\psi_i^\prime\|_\infty \leq \lambda\) and we identify \(\R\) with \(\R \cdot 1_{\mathcal{A}} \subset {\mathcal{A}}_{\mathrm{sa}}\).
  Together with \labelcref{eq:InsideEstimate,eq:OutsideEstimate} this implies
  \begin{equation}
  | \psi_i B u |^2 \geq \delta \kappa |\psi_i u|^2 - \lambda^2 |u|^2 - 2 \lambda \|u\| \|B u\|.
\end{equation}
  Returning to \labelcref{eq:Separation}, we obtain
  \begin{align*}
    \langle B^2 u \mid u \rangle &= | \psi_0 B u |^2 + | \psi_1 B u |^2 -\langle \rho B u \mid B u \rangle \\
    &\geq \delta \kappa \left( |\psi_0 u|^2 + |\psi_1 u|^2 \right) - 2 \lambda^2 |u|^2  - 4 \lambda \|u\| \|B u\| - \langle \rho B u \mid B u \rangle\\
    &= \delta\kappa (|u|^2 + \langle \rho u \mid u\rangle) - 2 \lambda^2 |u|^2  - 4 \lambda \|u\| \|B u\| - \langle \rho B u \mid B u \rangle\\
    &\geq \delta\kappa |u|^2  - (\varepsilon \delta\kappa + 2 \lambda^2) |u|^2  - 4 \lambda \|u\| \|B u\| - \varepsilon \langle B^2 u \mid u \rangle.
  \end{align*}
  Thus
  \[
   (1+ \varepsilon) \| Bu\|^2  \geq \delta \kappa \|u\|^2 - (\varepsilon \delta \kappa+2 \lambda^2) \|u\|^2  - 4 \lambda \|u\| \|B u\|.
  \]
  Since \(\varepsilon > 0\) was chosen arbitrarily (and independently of \(\delta\), \(\kappa\) and \(\lambda\)), we conclude that for any element \(u\) in the domain of \(B^2\), we have the estimate
  \[
    \| Bu\|^2  \geq \delta \kappa \|u\|^2 - 2 \lambda^2 \|u\|^2  - 4 \lambda \|u\| \|B u\|.
  \]
  Completing the square, it follows that
  \[
    \left( \|Bu\| + 2 \lambda \|u\|\right)^2 \geq (\delta \kappa + 2 \lambda^2) \|u\|^2
  \]
  and hence
  \[
    \|Bu\| \geq \left(\sqrt{\delta \kappa + 2 \lambda^2} - 2 \lambda \right)\|u\|.
  \]
  Finally, \(\lambda < \sqrt{\kappa\delta/2}\) implies \(\delta \kappa > 2 \lambda^2\) and so
  \[
    \sqrt{\delta \kappa + 2 \lambda^2} - 2 \lambda  > 0.
  \]
  Hence \cref{BoundedBelow} implies that \(B\) is invertible and thus \(\ind_{\PM}(\Dirac, x)=0\).
\end{proof}
\begin{rem}
  The term \(2 / \sqrt{\delta}\) in the definition of the constant \(C\) is precisely the cost of our interpolation between the positive scalar curvature region and its complement.
  If it were not there, we could take \(\delta > 0\) to be arbitrarily small, and we would obtain an upper bound of \(4/\sqrt{\sigma}\).
  This would be \emph{too good} according to \cref{rem:OptimalConstant}.
  Hence it is clear that the interpolation must come at some cost, but it is concievable that with more care our estimate can be improved to yield something closer to the optimal upper bound.
\end{rem}

\section{Band width and quadratic decay}\label{sec:BandWidth}
We start with a more general version of \cref{BandTheorem}.
We use the language from \cite[Section~2]{Gromov:MetricInequalitiesScalar}.
A \emph{band} is a manifold \(V\) with two distinguished subsets \(\partial_\pm V\) of the boundary \(\partial V\).
It is called \emph{proper} if each \(\partial_\pm V\) is a union of connected components of the boundary and \(\partial V = \partial_- V \sqcup \partial_+ V\).
If \(V\) is a Riemannian manifold, then we define \(\width(V) \coloneqq \dist(\partial_- V, \partial_+ V)\), the infimum of lengths of curves from a point in \(\partial_-V\) to a point in \(\partial_+V\).

\begin{thm}\label{GeneralBandTheorem}
  Let \(V\) be an \(n\)-dimensional compact proper band which is a Riemannian spin manifold.
  Let \(A\) be a Real unital \textCstar-algebra and \(E \to W\) a smooth bundle of finitely generated projective Hilbert \(A\)-modules endowed with a flat metric connection.
  Suppose that the index of the Dirac operator on \(\partial_-V\) twisted by \(E|_{\partial_- V}\) does not vanish in \(\KO_{n-1}(A)\).
  If the scalar curvature of \(V\) is bounded below by \(\sigma > 0\), then
  \[
		\width(V) \leq \frac{C_n}{\sqrt{\sigma}},
	\]
  where  \(C_n = \sqrt{(n-1)/n} \cdot C\) is the constant from \cref{TechnicalTheorem}.
\end{thm}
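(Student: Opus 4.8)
The plan is to derive \cref{GeneralBandTheorem} from \cref{TechnicalTheorem} by completing the band to a manifold over the real line, as anticipated in the remark following \cref{TechnicalTheorem}: the idea is to attach infinite half-cylinders to the two boundary components. So I would first let \(h_\pm\) denote the metric induced on \(\partial_\pm V\) and form
\[
  W \coloneqq \bigl((-\infty,0]\times\partial_- V\bigr)\ \cup_{\partial_- V}\ V\ \cup_{\partial_+ V}\ \bigl(\partial_+ V\times[0,\infty)\bigr),
\]
where the half-cylinders carry the product metrics \(\D t^2+h_\pm\). A collar neighbourhood of \(\partial V\) in \(V\) gives \(W\) a smooth structure, and a standard collar-and-interpolation argument produces a smooth complete Riemannian metric \(g_W\) on \(W\) that coincides with \(g_V\) on \(V\) outside an arbitrarily thin collar of \(\partial V\) and with the product metric on each half-cylinder outside an arbitrarily thin neighbourhood of \(\partial_\pm V\); completeness uses that the \(\partial_\pm V\) are closed manifolds. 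The spin structure of \(V\) extends uniquely over \(W\), and I would extend \(E\) over the cylinders as the pullback of \(E|_{\partial_\pm V}\) along the projections, equipped with the pulled-back flat connection, obtaining a flat bundle of finitely generated projective Real Hilbert \(A\)-modules on \(W\).

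Next I would supply the proper non-expanding function required by \cref{TechnicalTheorem}. Writing \(w\coloneqq\width(V)\), set \(x\colon W\to\R\) equal to the coordinate \(t\) on the lower cylinder, equal to \(w+s\) on the upper cylinder (with \(s\in[0,\infty)\)), and equal to \(\min\bigl(\dist_{g_V}(p,\partial_- V),\,w\bigr)\) at \(p\in V\). These formulas agree along the two gluing loci — on \(\partial_+ V\) one has \(\dist_{g_V}(\,\cdot\,,\partial_- V)\ge w\) — so \(x\) is a well-defined continuous function, and it is proper because the \(\partial_\pm V\) are compact. Since \(g_W\) can be kept within a factor \((1+\varepsilon)\) of \(g_V\) on the thin collar where it differs from \(g_V\), the function \(x\) is \((1+\varepsilon)\)-Lipschitz on \((W,g_W)\), and rescaling \(x\) by \((1+\varepsilon)^{-1}\) — as in the reduction at the start of the proof of \cref{TechnicalTheorem} — I would reduce to the case that \(x\) is genuinely \(1\)-Lipschitz.

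To verify the hypothesis \(\ind_{\PM}(\Dirac_{W,E},x)\neq 0\), observe that near \(x^{-1}(-1)=\{-1\}\times\partial_- V\) the function \(x\) is simply the coordinate \(t\), so \(-1\) is a regular value and \(x\) is smooth there, and that on the lower cylinder the metric, the spin structure and the bundle \(E\) are all pulled back from \(\partial_- V\); hence the data induced on the slice \(x^{-1}(-1)\) is canonically that of \(\partial_- V\) with \(E|_{\partial_- V}\). The partitioned manifold index theorem \eqref{eq:PMIT}, applied at this slice, therefore gives
\[
  \ind_{\PM}(\Dirac_{W,E},x)\ =\ \ind\bigl(\Dirac_{\partial_- V,\,E|_{\partial_- V}}\bigr)\ \neq\ 0
\]
by hypothesis. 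Finally, for \(\eta\in(0,w/2)\) one has \(x^{-1}\bigl([\eta,w-\eta]\bigr)=\{\,p\in V:\dist_{g_V}(p,\partial_- V)\in[\eta,w-\eta]\,\}\), and by the triangle inequality each such \(p\) also satisfies \(\dist_{g_V}(p,\partial_+ V)\ge w-(w-\eta)=\eta\); thus this set sits at \(g_V\)-distance at least \(\eta\) from all of \(\partial V\). Choosing the collar modified in the construction thinner than \(\eta\), the metric \(g_W\) agrees with \(g_V\) on \(x^{-1}([\eta,w-\eta])\), so \(\scal_{g_W}\ge\sigma\) there. Applying \cref{TechnicalTheorem} to \(W\) with \(I=[\eta,w-\eta]\) gives \(w-2\eta\le C_n/\sqrt{\sigma}\), and letting \(\eta\to 0\) yields \(\width(V)=w\le C_n/\sqrt{\sigma}\).

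All of the analytic substance sits in \cref{TechnicalTheorem}; the only step that requires genuine care here is the combined bookkeeping of the first two paragraphs — simultaneously arranging a smooth complete metric \(g_W\), a proper non-expanding \(x\), and the scalar-curvature bound on \(x^{-1}(I)\), with the unavoidable metric modification at the gluing confined to a collar of \(\partial V\) so thin that it does not meet \(x^{-1}(I)\). I expect this to be routine but fiddly, and it is the main (and only real) obstacle in the argument.
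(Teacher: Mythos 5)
Your proposal is correct and takes essentially the same route as the paper: attach infinite cylinders to \(\partial_\pm V\), extend the flat bundle, identify the partitioned manifold index with the boundary index via \labelcref{eq:PMIT}, and feed a proper \(1\)-Lipschitz distance-type function into \cref{TechnicalTheorem}. The paper's implementation is slightly cleaner in that it avoids your collar interpolation and the \(\varepsilon,\eta\)-limits: it takes any complete extension \(g_W\) agreeing with \(g_V\) on all of \(V\) (no product structure on the cylinders is needed) and uses the signed \(d_W\)-distance to \(\partial_- V\), which is automatically \(1\)-Lipschitz, the only extra step being the check that \(x^{-1}([0,\width(V)]) \subseteq V\).
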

Observe that by bordism invariance---which, incidentally, follows from \labelcref{eq:PMIT}---the index on \(\partial_-V\) is the same as the one on \(\partial_+V\).

To see how this theorem relates to \cref{BandTheorem}, note that on any connected space \(X\), there is the Mishchenko line bundle
\[
  \mathcal{L}_{X} \coloneqq \tilde{X} \times_{\pi_1 X} \Cstar(\pi_1 X)
\]
which is the flat bundle of Hilbert \(\Cstar(\pi_1 X)\)-modules associated to the representation of \(\pi_1 X\) on its group \textCstar-algebra by left-multiplication.
In the case of a closed spin manifold \(M\), the \emph{Rosenberg index} \(\alpha(M)\) is the index of the Dirac operator twisted by the Mishchenko bundle, \(\ind(\Dirac_{M, \mathcal{L}_M})\).
Thus \cref{BandTheorem} follows from \cref{GeneralBandTheorem} by taking \(E = \mathcal{L}_V\) to be the Mishchenko line bundle of \(V = M \times [-1,1]\) because the inclusion of the boundary components induces an isomorphism on \(\pi_1\).
\begin{proof}[Proof of \cref{GeneralBandTheorem}]
  \begin{figure}
     \def\svgwidth{\textwidth}
\begingroup%
  \makeatletter%
  \providecommand\color[2][]{%
    \errmessage{(Inkscape) Color is used for the text in Inkscape, but the package 'color.sty' is not loaded}%
    \renewcommand\color[2][]{}%
  }%
  \providecommand\transparent[1]{%
    \errmessage{(Inkscape) Transparency is used (non-zero) for the text in Inkscape, but the package 'transparent.sty' is not loaded}%
    \renewcommand\transparent[1]{}%
  }%
  \providecommand\rotatebox[2]{#2}%
  \newcommand*\fsize{\dimexpr\f@size pt\relax}%
  \newcommand*\lineheight[1]{\fontsize{\fsize}{#1\fsize}\selectfont}%
  \ifx\svgwidth\undefined%
    \setlength{\unitlength}{478.79996713bp}%
    \ifx\svgscale\undefined%
      \relax%
    \else%
      \setlength{\unitlength}{\unitlength * \real{\svgscale}}%
    \fi%
  \else%
    \setlength{\unitlength}{\svgwidth}%
  \fi%
  \global\let\svgwidth\undefined%
  \global\let\svgscale\undefined%
  \makeatother%
  \begin{picture}(1,0.34574769)%
    \lineheight{1}%
    \setlength\tabcolsep{0pt}%
    \put(0,0){\includegraphics[width=\unitlength,page=1]{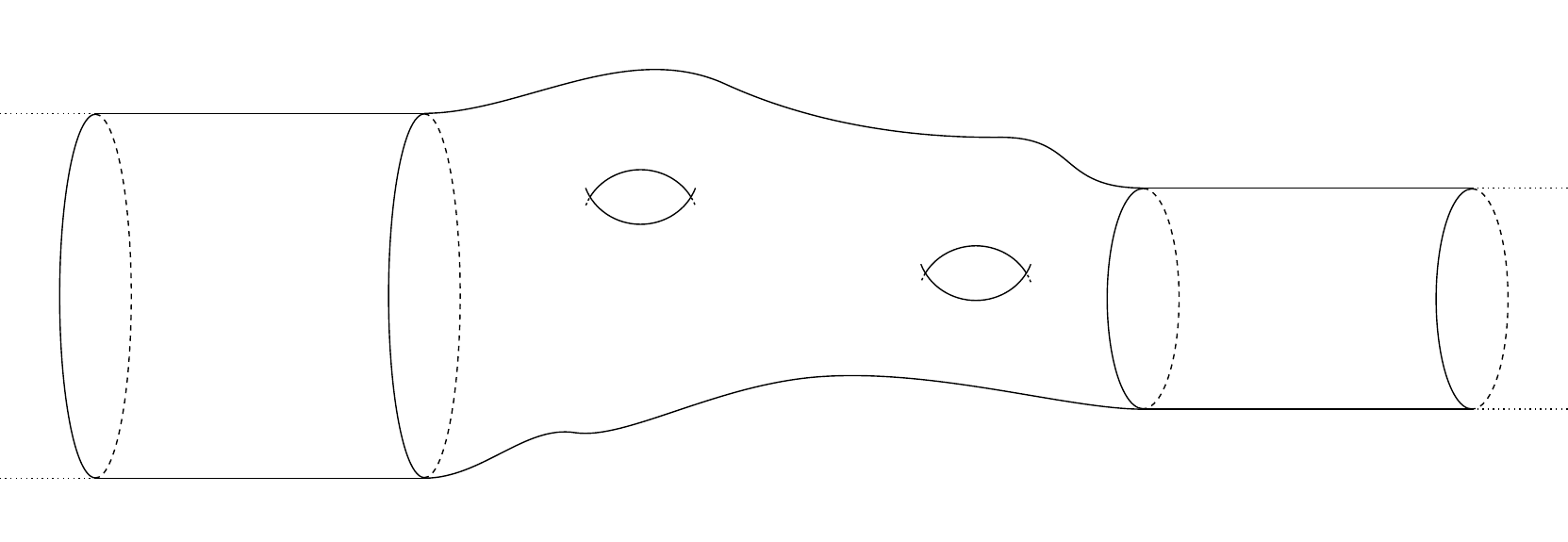}}%
    \put(0.24708034,0.00492564){\color[rgb]{0,0,0}\makebox(0,0)[lt]{\lineheight{1.25}\smash{\begin{tabular}[t]{l}$\partial_{-}V$\end{tabular}}}}%
    \put(0.70872541,0.0487847){\color[rgb]{0,0,0}\makebox(0,0)[lt]{\lineheight{1.25}\smash{\begin{tabular}[t]{l}$\partial_{+}V$\end{tabular}}}}%
    \put(0.1792271,0.31651508){\color[rgb]{0,0,0}\makebox(0,0)[lt]{\lineheight{1.25}\smash{\begin{tabular}[t]{l}$W_{-}$\end{tabular}}}}%
    \put(0.3126547,0.31651508){\color[rgb]{0,0,0}\makebox(0,0)[lt]{\lineheight{1.25}\smash{\begin{tabular}[t]{l}$W_{+}$\end{tabular}}}}%
    \put(0.47956812,0.05754403){\color[rgb]{0,0,0}\makebox(0,0)[lt]{\lineheight{1.25}\smash{\begin{tabular}[t]{l}$V$\end{tabular}}}}%
    \put(0,0){\includegraphics[width=\unitlength,page=2]{BandCylinder.pdf}}%
  \end{picture}%
\endgroup%

      \caption{Attaching cylinders to the boundary}
      \label{AttachingFigure}
  \end{figure}
  We construct a manifold \(W\) out of \(V\) by attaching infinite cylinders to each distinguished part of the boundary as in \cref{AttachingFigure}.
  That is,
  \[
  W \coloneqq (-\infty, -1] \times \partial_-\cup_{\partial_-} V \cup_{\partial_+} \partial_+ \times [1,\infty),
  \]
  where we used the short-hand \(\partial_\pm \coloneqq \partial_\pm V\).
  Furthermore, we set
  \[
  W_- \coloneqq (-\infty, -1] \times \partial_- \quad \text{and} \quad
  W_+ \coloneqq  V \cup_{\partial_+} \partial_+ \times [1,\infty)
  \]
  so that \(W = W_- \cup_{\partial_-} W_+\).
  We can extend the flat bundle \(E\) along the cylinders to obtain a flat bundle \(E_W\) on \(W\).

	Let \(g_V\) denote the Riemannian metric on \(V\).
	Then we fix a complete Riemannian metric \(g_W\) on \(W\) which restricts to \(g_V\) on \(V\).
	Let \(d_V\) be the length metric on \(V\) induced by \(g_V\), and \(d_W\) be the length metric on \(W\) induced by \(g_W\).
	Let \(x \colon W \to \R\) be the function which assigns to a point \(p \in W\) the signed distance to the submanifold \(\partial_-\) with respect to \(d_W\).
	That is, \(x(p) = \pm \min_{y \in \partial_-} d_{W}(y, p)\), where we use the positive sign if \(p \in W_+\) and otherwise the negative sign.
	Then \(x\) is a proper \(1\)-Lipschitz function.
	By \labelcref{eq:PMIT}, we have \(\ind_{\PM}(\Dirac_{W, E_W}, x) = \ind(\Dirac_{\partial_-, E|_{\partial_-}}) \in \KO_{n-1}(A)\) which is non-zero by assumption.

	Finally, set \(l = \width(V) = \dist_{V}(\partial_-, \partial_+)\).
	Let \(p \in x^{-1}([0,l])\).
	Then there exists a curve inside \(W_+\) of length at most \(l\) connecting \(p\) to a point in \(\partial_-\).
	This curve will eventually lie inside \(V\).
	Hence \(p \in V\) because otherwise there would be a curve inside \(V\) of length smaller than \(l\) connecting a point in \(\partial_+\) to a point in \(\partial_-\).
	Therefore we have proved that \(x^{-1}([0,l]) \subseteq V\).
	In particular, by assumption, the scalar curvature of \(W\) is bounded below by \(\sigma\) on \(x^{-1}([0,l])\).
	Thus \cref{TechnicalTheorem} implies that \(\width(V) = l \leq C_n/\sqrt{\sigma}\).
\end{proof}

Next, we turn to the quadratic decay result, \cref{QuadraticDecay}.
We start with the statement of a technical lemma taken from \cite[Theorem~4.3]{HankePapeSchick:CodimensionTwoIndex}.
\begin{lem}\label{SplitMono}
  Let \(X\) be a manifold and \(M \subset X\) be a submanifold, both connected and without boundary.
  Suppose that \(M\) has codimension two with trivial normal bundle and the inclusion \(M \hookrightarrow X\) is \(2\)-connected.
  Let \(W\) be the manifold with boundary which is obtained from \(X\) by deleting an open tubular neighborhood of \(M\).
  Then there exists a homomorphism \(r \colon \pi_1(W) \to \pi_1(M \times \Sphere^1)\) which is a retraction to the map \(\pi_1 (M \times \Sphere^1) \to \pi_1(W)\) induced by the inclusion \(M \times \Sphere^1 = \partial W \hookrightarrow W\).
\end{lem}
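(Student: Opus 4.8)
The plan is to assemble the retraction from a pair of homomorphisms $r_1 \colon \pi_1(W) \to \pi_1(M)$ and $r_2 \colon \pi_1(W) \to \Z$ and to set $r \coloneqq (r_1, r_2)$, using that $\pi_1(M \times \Sphere^1) = \pi_1(M) \times \Z$ and that a map into a product is a homomorphism exactly when its components are. Let $N \cong M \times D^2$ denote the closed tubular neighbourhood of $M$ whose interior is the deleted open neighbourhood, so that $W = X \setminus \mathring N$ and $P \coloneqq \partial W = \partial N = M \times \Sphere^1$; write $\mu \in \pi_1(P)$ for the meridian (the generator of the $\Sphere^1$-factor) and $j \colon P \hookrightarrow W$ for the inclusion. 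It then suffices to arrange $r_1 \circ j_* = \mathrm{pr}_{\pi_1 M}$ and $r_2 \circ j_* = \mathrm{pr}_{\Z}$.

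For $r_1$ I would exploit that $M \hookrightarrow X$ is $2$-connected, so that it induces an isomorphism $\iota_* \colon \pi_1(M) \xrightarrow{\cong} \pi_1(X)$; define $r_1 \coloneqq \iota_*^{-1} \circ (W \hookrightarrow X)_*$. To see $r_1 \circ j_* = \mathrm{pr}_{\pi_1 M}$, one checks that the composite $P \hookrightarrow W \hookrightarrow X$ agrees up to homotopy with $P \xrightarrow{\mathrm{pr}} M \xrightarrow{\iota} X$: inside the tube, the radius-one copy $M \times \{\mathrm{pt}\}$ of $M$ is homotopic to the zero section $M$ by radial scaling in $D^2$, and the meridian $\mu$ bounds a normal disc $\{\mathrm{pt}\} \times D^2 \subset X$ and is hence nullhomotopic there. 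Thus $(W \hookrightarrow X)_* \circ j_* = \iota_* \circ \mathrm{pr}_{\pi_1 M}$, and applying $\iota_*^{-1}$ gives the claim.

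For $r_2$ I would look for a class $\ell \in H^1(W;\Z)$ whose restriction to $P$ equals the class $\vartheta \in H^1(P;\Z)$ pulled back from the generator of $H^1(\Sphere^1;\Z)$ along the projection $P \to \Sphere^1$; since $\vartheta$ annihilates $H_1(M;\Z) \subseteq H_1(P;\Z)$ and sends $[\mu]$ to $1$, the composite $r_2 \colon \pi_1(W) \to H_1(W;\Z) \xrightarrow{\ell} \Z$ then satisfies $r_2 \circ j_* = \mathrm{pr}_{\Z}$. To obtain such an $\ell$ it is enough to show $H^k(W, P;\Z) = 0$ for $k \le 2$, for then the long exact sequence of the pair $(W,P)$ makes restriction $H^1(W;\Z) \to H^1(P;\Z)$ an isomorphism. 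This vanishing I would deduce from the isomorphisms $H^*(W,P;\Z) \cong H^*(X,N;\Z) \cong H^*(X,M;\Z)$ — the first by excising the open tube, the second because $M \hookrightarrow N$ is a deformation retract — combined with the observation that, $M \hookrightarrow X$ being $2$-connected, the pair $(X,M)$ is $2$-connected, so the relative Hurewicz theorem gives $H_k(X,M;\Z) = 0$ and the universal coefficient theorem gives $H^k(X,M;\Z) = 0$ for $k \le 2$.

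Then $r = (r_1, r_2) \colon \pi_1(W) \to \pi_1(M) \times \Z = \pi_1(M \times \Sphere^1)$ is a homomorphism with $r \circ j_* = (\mathrm{pr}_{\pi_1 M}, \mathrm{pr}_{\Z}) = \mathrm{id}$, as required. The step I expect to be the real obstacle is the homological input for $r_2$: the excision $H^*(W,P) \cong H^*(X,N)$ needs the routine but fiddly adjustment of first passing to slightly larger and smaller tubes so that the excised set has closure contained in the interior, and one must use the $2$-connectedness hypothesis carefully to get the vanishing in the full range $k \le 2$ rather than merely $k \le 1$. As an alternative to working with the pair $(W,P)$, one can instead run the Gysin sequence of $M \subset X$ and note that the obstruction to surjectivity of the residue map $H^1(W;\Z) \to H^0(M;\Z) = \Z$ is detected by the Euler class of the normal bundle, which vanishes because the bundle is trivial; the resulting class may then be corrected by a pullback from $H^1(X;\Z)$ so that its restriction to $P$ is exactly $\vartheta$.
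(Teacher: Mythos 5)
Your proposal is correct, but note that the paper itself contains no proof of this lemma: it is quoted from \cite{HankePapeSchick:CodimensionTwoIndex} (Theorem~4.3), with the remark that the hypothesis \(\pi_2 X = 0\) used there can be weakened to the present one without changing the argument. Your route is essentially that argument: split \(r = (r_1, r_2)\) into a \(\pi_1(M)\)-component obtained from \(\pi_1(W) \to \pi_1(X) \cong \pi_1(M)\), and a \(\Z\)-component obtained from a class \(\ell \in H^1(W;\Z)\) restricting to the \(\Sphere^1\)-class on \(\partial W\); the \(2\)-connectedness enters exactly where you put it, through \(H^2(W, \partial W;\Z) \cong H^2(X, N;\Z) \cong H^2(X, M;\Z) = 0\), and your verification that \(r\circ j_* = \mathrm{id}\) on \(\pi_1(M)\times\Z\) is complete.

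Two small points. First, since \(M\) is not assumed simply connected, the relative Hurewicz step needs the general version of the theorem (vanishing of \(H_k(X,M)\) for \(k\le 2\) with \(H_3\) identified with \(\pi_3(X,M)\) modulo the \(\pi_1(M)\)-action), or equivalently one passes to the universal cover of \(X\) --- whose restriction over \(M\) is the universal cover of \(M\) because \(\pi_1 M \to \pi_1 X\) is an isomorphism --- applies the simply connected Hurewicz theorem there, and descends the vanishing (e.g.\ via the Cartan--Leray spectral sequence); the most commonly cited textbook statement assumes the subspace simply connected, so this deserves a sentence. Second, your closing ``alternative'' via the Gysin sequence is too quick as stated: the obstruction to surjectivity of \(H^1(W;\Z) \to H^0(M;\Z)\) is the image of the Thom class in \(H^2(X;\Z)\) (the dual class of \(M\)), not the Euler class \(e(\nu) \in H^2(M;\Z)\); one only knows \(e(\nu) = i^*\) of that class, so concluding that the obstruction vanishes again requires injectivity of \(H^2(X;\Z) \to H^2(M;\Z)\), i.e.\ the same \(2\)-connectedness input (a point in \(\Sphere^2\) has trivial normal bundle but non-split meridian). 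So that aside is not a way to bypass the homological step; your main argument is the right one.
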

Note that in loc.~cit.\ the stronger assumption \(\pi_2 X = 0\) is required.
But this is unnecessary without any change to the proof.
The statement with the hypothesis as in \cref{SplitMono} has also been reproduced in the author's thesis~\cite[Lemma~4.1.4]{Zeidler:SecondaryLargescaleIndex}.

\begin{proof}[Proof of \cref{QuadraticDecay}]
  In the first part of the proof, we follow the strategy of \cite{HankePapeSchick:CodimensionTwoIndex}.
  Let \(\bar{X} \to X\) be the connected Riemannian covering of \(X\) with \(\pi_1 \bar{X} = \pi_1 M\).
  Fix a base-point \(\bar{x}_0 \in \bar{X}\) which lies over the chosen base-point \(x_0 \in M \subset X\).
  The embedding \(M \subset X\) uniqely lifts to an embedding \(M \hookrightarrow \bar{X}\) taking \(x_0\) to \(\bar{x}_0\).
  By assumption, the embedding \(M \subset \bar{X}\) satisfies the hypotheses of \cref{SplitMono}.
  Let \(W\) be the manifold obtained from deleting a tubular neighborhood of \(M\) inside \(\bar{X}\).
  Then \(\partial W = M \times \Sphere^1\) with Rosenberg index \(\alpha(\partial W) = \alpha(M) \times \alpha(\Sphere^1) \neq 0\), see~\cref{AppendixInj}.
  Using the homomorphism \(r \colon \pi_1 W \to \pi_1(M \times \Sphere^1)\) from \cref{SplitMono}, we can extend the Mishchenko bundle of \(M \times \Sphere^1\) to a flat bundle \(E = \tilde{W} \times_{\pi_1 W} \Cstar(\pi_1 M \times \Z)\) over all of \(W\).

  The next part is essentially the same as the proof of the quadratic decay results outlined in \cite{Gromov:MetricInequalitiesScalar}.
  Observe that for each \(l > 0\) and \(\varepsilon > 0\), the \(l\)-neighborhood \(\Nbh_l(\partial W \subset W)\) of \(\partial W\) in \(W\) contains a proper band \(V\) with \(\partial_- V = \partial W\) and \(\width(V) \geq l - \varepsilon\).
  Essentially, we take \(V = \Nbh_l(\partial W \subset W)\); the auxilliary \(\varepsilon\) simply allows for some wiggle-room in cases where the boundary of \(\Nbh_l(\partial W \subset W)\) is not smooth.

  To prove the result, we claim that \(\inf_{\Nbh_l(\partial W \subset W)} \scal_{\bar{X}} \leq C_n^2/l^2\).
  Here we only need to consider the case \(\sigma \coloneqq \inf_{\Nbh_l(\partial W \subset W)} \scal_{\bar{X}} > 0\).
  Then it follows from the previous paragraph and \cref{GeneralBandTheorem} that \(l-\varepsilon \leq C_n / \sqrt{\sigma}\) for any \(\varepsilon >0\).
  Therefore \(\sigma \leq C_n^2 / l^2\), as claimed.
  Finally, we let \(R_0\) be the diameter of the tubular neighborhood that was deleted from \(\bar{X}\) to obtain \(W\).
  Then we have \(\Nbh_{R-R_0}{(\partial W \subset W}) \subseteq \Ball_R(\bar{x}_0) \) for every \(R > 0\), where \(\Ball_R(\bar{x}_0)\) denotes the \(R\)-ball in \(\bar{X}\).
  Hence
  \[
    \min_{\Ball_R(x_0)} \scal_{X} = \min_{\Ball_R(\bar{x}_0)} \scal_{\bar{X}} \leq \inf_{\Nbh_{R-R_0}{(\partial W})} \scal_{\bar{X}} \leq \frac{C_n^2}{(R-R_0)^2}.\qedhere
  \]
\end{proof}

\section{Infinite \texorpdfstring{\(\mathcal{KO}\)}{KO}-width}\label{sec:KOWidth}
In this section, we introduce the \emph{\(\mathcal{KO}\)-width} for a Riemannian manifold based on the bands that appear in \cref{GeneralBandTheorem}.
This is inspired by similar notions of width which were studied in \cite{Gromov:MetricInequalitiesScalar} in terms of locally isometrically immersed bands of various types.
Start with a general definition.
\begin{defi}
  Let \(\mathcal{V}\) be some class of compact proper bands.
The \emph{\(\mathcal{V}\)-width} of a Riemannian manifold \(X\), denoted by \(\width_{\mathcal{V}}(X)\), is the supremum of all real numbers \(l > 0\) such that there exists a band \(V \in \mathcal{V}\) of the same dimension as \(X\) together with an immersion \(V \looparrowright X\) such that \(\width(V) \geq l\), where \(V\) is endowed with the Riemannian metric induced from \(X\).
If no such \(l > 0\) exists, we let \(\width_{\mathcal{V}}(X) = 0\) by convention.
\end{defi}

If \(X\) is compact, then whether or not \(\width_{\mathcal{V}}(X)\) is finite does not depend on the particular choice of Riemannian metric on \(X\).
Thus we make the following definition.
\begin{defi}
  Let \(M\) be a closed smooth manifold.
  We say that \(M\) has \emph{infinite \(\mathcal{V}\)-width} if \(\width_{\mathcal{V}}(M) = \infty\) for some (and hence any) choice of Riemannian metric on \(M\).
\end{defi}

Motivated by \cref{GeneralBandTheorem}, we introduce the following class \(\mathcal{KO}\).

\begin{defi}
  Let \(\mathcal{KO}\) be the class of compact proper bands \(V\) such that the following holds.
  \begin{itemize}
    \item There exists a Real unital \textCstar-algebra \(A\) and \(E \to V\) a smooth bundle of finitely generated Hilbert \(A\)-modules endowed with a flat metric connection.
    \item The band \(V\) is a spin manifold and the index of the Dirac operator on \(\partial_-V\) twisted by \(E|_{\partial_- V}\) does not vanish in \(\KO_{n-1}(A)\).
  \end{itemize}

  Similarly, we can define the class \(\mathcal{K}\), where we take the same definition but work with complex  \textCstar-algebras and K-theory instead.
  We obtain the class \(\mathcal{K}_{\Q}\) by insisting that the index rationally does not vanish.
  Note that \(\mathcal{K}_{\Q} \subset \mathcal{K} \subset \mathcal{KO}\).
\end{defi}

The next theorem is a reformulation of \cref{GeneralBandTheorem}.
The constants \(C\) and \(C_n = \sqrt{(n-1)/n} \cdot C\) in the following are understood to be the same as in \cref{TechnicalTheorem}.

\begin{thm}\label{InfiniteKOWidthObstruction}
  If \(X\) is an \(n\)-dimensional Riemannian manifold with scalar curvature bounded below by \(\sigma > 0\), then \(\width_{\mathcal{KO}}(X) \leq C_n / \sqrt{\sigma} < \infty\).
  In particular, if \(M\) is a closed manifold of infinite \(\mathcal{KO}\)-width, then \(M\) does not admit a metric of positive scalar curvature.
\end{thm}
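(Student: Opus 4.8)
The plan is to deduce this directly from \cref{GeneralBandTheorem}, the only new ingredients being the observations that the data defining membership in the class $\mathcal{KO}$ is metric-independent, while a pointwise lower bound on scalar curvature is inherited under Riemannian immersions.

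First I would fix a metric $g_X$ on $X$ with $\scal_X \geq \sigma > 0$ and let $l$ be arbitrary with $0 < l < \width_{\mathcal{KO}}(X)$ (if no such $l$ exists then $\width_{\mathcal{KO}}(X) = 0$ and the bound is trivial). By definition there is an $n$-dimensional band $V \in \mathcal{KO}$ together with an immersion $f \colon V \looparrowright X$ such that $\width(V) \geq l$, where $V$ carries the induced metric $f^\ast g_X$. Since $V$ is compact, $(V, f^\ast g_X)$ is a compact Riemannian manifold, and since $f$ is a local isometry its scalar curvature satisfies $\scal_V = \scal_X \circ f \geq \sigma$ pointwise. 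The remaining hypotheses of \cref{GeneralBandTheorem}---that $V$ is a compact proper spin band carrying a flat Hilbert $A$-module bundle $E$ whose restriction to $\partial_- V$ twists the Dirac operator to a non-vanishing index in $\KO_{n-1}(A)$---are precisely the defining conditions for $V$ to lie in $\mathcal{KO}$, and the non-vanishing of this index, being the index of a Dirac operator twisted by a bundle with flat connection, does not depend on the chosen Riemannian metric. Hence \cref{GeneralBandTheorem} applies to $(V, f^\ast g_X)$ and yields $l \leq \width(V) \leq C_n / \sqrt{\sigma}$. Taking the supremum over all admissible $l$ gives $\width_{\mathcal{KO}}(X) \leq C_n / \sqrt{\sigma} < \infty$.

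For the second assertion, suppose $M$ is a closed manifold of infinite $\mathcal{KO}$-width and, for contradiction, that $M$ admits a metric $g$ of positive scalar curvature. By compactness of $M$, the continuous function $\scal_g$ attains a positive minimum, so $\scal_g \geq \sigma$ everywhere for some $\sigma > 0$. Applying the first part with $X = (M, g)$ gives $\width_{\mathcal{KO}}(M, g) \leq C_n / \sqrt{\sigma} < \infty$, contradicting the assumption that $\width_{\mathcal{KO}}(M, g) = \infty$, which holds for this metric (indeed for any metric) by hypothesis.

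There is no real obstacle here: all the substance is already contained in \cref{TechnicalTheorem} and its band-width reformulation \cref{GeneralBandTheorem}. The only points requiring a moment's care are that an immersion transports a pointwise lower scalar curvature bound verbatim because it is a local isometry, and that the index condition in the definition of $\mathcal{KO}$ is insensitive to the metric, so that neither the choice of immersion nor the choice of metric on $X$ affects whether a given band lies in $\mathcal{KO}$.
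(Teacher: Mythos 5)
Your argument is correct and is exactly the intended one: the paper gives no separate proof, simply calling the theorem a reformulation of \cref{GeneralBandTheorem}, and your unwinding of the definition of \(\width_{\mathcal{KO}}\) (local isometry transports the scalar curvature bound, membership in \(\mathcal{KO}\) is metric-independent, then take the supremum and use compactness for the second assertion) is precisely what that reformulation amounts to.
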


The scalar curvature of the standard round sphere yields interesting bounds on the \(\mathcal{KO}\)-width of both the sphere and the Euclidean ball.
\begin{cor}\label{KOWidthBall}
  The \(\mathcal{KO}\)-width of both the round sphere and the Euclidean ball of radius \(r\) in dimension \(n\) is at most \(r C/ n\).
\end{cor}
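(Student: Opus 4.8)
The plan is to obtain the bound for the sphere as an immediate instance of \cref{InfiniteKOWidthObstruction}, and then to reduce the ball to the sphere by means of a distance non-decreasing immersion. For the round sphere $\Sphere^n$ of radius $r$ the scalar curvature is the constant $\sigma = n(n-1)/r^2$, so \cref{InfiniteKOWidthObstruction} applies verbatim and gives
\[
  \width_{\mathcal{KO}}(\Sphere^n) \;\leq\; \frac{C_n}{\sqrt{\sigma}} \;=\; \sqrt{\frac{n-1}{n}}\; C\; \sqrt{\frac{r^2}{n(n-1)}} \;=\; \frac{rC}{n}.
\]

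For the Euclidean ball $B$ of radius $r$ the metric is flat, so \cref{InfiniteKOWidthObstruction} yields nothing directly and one has to compare $B$ with a slightly larger round sphere. Write $\Sphere^n_\rho$ for the round $n$-sphere of radius $\rho$. The geometric input is that the inverse of the orthogonal projection of the open upper hemisphere of $\Sphere^n_\rho$ onto its equatorial disc is a smooth distance non-decreasing immersion $f_\rho$ of the open Euclidean ball of radius $\rho$ into $\Sphere^n_\rho$: in geodesic polar coordinates around the pole it sends the point at Euclidean distance $t$ from the origin to the point at spherical distance $\rho\arcsin(t/\rho)$ from the pole in the same $\Sphere^{n-1}$-direction, and a short computation in these coordinates shows that $f_\rho^{\ast} g_{\Sphere^n_\rho} \geq g_{\mathrm{eucl}}$ holds pointwise as quadratic forms on the open ball (with equality in the angular directions). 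Now consider any band $V \in \mathcal{KO}$ with $\dim V = n$ together with an immersion $\iota \colon V \looparrowright B$, and equip $V$ with the induced metric $g_V = \iota^{\ast} g_{\mathrm{eucl}}$. Since $V$ is compact, $\iota(V)$ lies in the open ball of radius $r+\varepsilon$ for every $\varepsilon > 0$, so $f_{r+\varepsilon}\circ\iota \colon V \looparrowright \Sphere^n_{r+\varepsilon}$ is an immersion whose induced metric $g'_V$ satisfies $g'_V \geq g_V$ pointwise; hence every curve in $V$ is at least as long for $g'_V$ as for $g_V$, so $\width(V, g'_V) \geq \width(V, g_V)$. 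As membership in $\mathcal{KO}$ does not involve a metric, $(V, g'_V)$ together with $f_{r+\varepsilon}\circ\iota$ is admissible in the definition of $\width_{\mathcal{KO}}(\Sphere^n_{r+\varepsilon})$, so the sphere case gives
\[
  \width(V, g_V) \;\leq\; \width(V, g'_V) \;\leq\; \width_{\mathcal{KO}}(\Sphere^n_{r+\varepsilon}) \;\leq\; \frac{(r+\varepsilon)C}{n}.
\]
Letting $\varepsilon \to 0$ and then taking the supremum over all such $V$ yields $\width_{\mathcal{KO}}(B) \leq rC/n$.

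The only step that demands genuine care is the construction of $f_\rho$ and the verification that it is distance non-decreasing; everything else is bookkeeping. The passage to radius $r+\varepsilon$ merely compensates for the fact that $f_\rho$ degenerates at the equator of $\Sphere^n_\rho$, equivalently on the boundary sphere of the ball, and so $f_\rho$ is genuinely defined only on the open ball.
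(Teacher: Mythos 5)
Your proof is correct and follows essentially the same route as the paper: the sphere case is read off from \cref{InfiniteKOWidthObstruction}, and the ball is compared with the sphere via the orthogonal projection between the upper hemisphere and the equatorial disc (your $f_\rho$ is exactly the inverse of the non-expanding projection $\Psi$ used in the paper). Your extra passage to radius $r+\varepsilon$ is a slightly more careful treatment of the degeneracy of this correspondence at the equator, which the paper glosses over.
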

\begin{proof}
   The sphere of radius \(r\) has scalar curvature \(n(n-1) / r^2\), so the case of the sphere is an immediate consequence of \cref{InfiniteKOWidthObstruction}.
   To see the case of the ball, consider the standard projection \(\Psi \colon \Sphere_{r,+}^n \to \Ball_r^n\) of the upper hemisphere onto the ball.
  Let \(g_{\Sphere_r^n}\) denote the round metric on the sphere and \(g_{\R^n}\) the standard Euclidiean metric on the ball.
   Then \(\Psi \colon (\Sphere_{r,+}^n, g_{\Sphere_r^n}) \to (\Ball_r^n, g_{\R^n})\) is a diffeomorphism and non-expanding.
   Now let \(b \colon V \looparrowright \Ball_r^n\) be a band in the ball with \(V \in \mathcal{KO}\).
   Then \(\Psi^{-1} \circ b \colon V \looparrowright \Sphere_{r,+}^n \subset \Sphere_r^n\) is a \(\mathcal{KO}\)-band in the sphere and hence \(\width(V, (\Psi^{-1} \circ b)^\ast g_{\Sphere_r^n}) \leq r C / n\).
   Since \(\Psi\) is non-expanding,
   \begin{align*}
    \width(V, b^\ast g_{\R^n}) &= \width(V, (\Psi \circ \Psi^{-1} \circ b)^\ast g_{\R^n})\\
    &\leq \width(V, (\Psi^{-1} \circ b)^\ast g_{\Sphere_r^n}) \leq \frac{r C}{n}. \qedhere
  \end{align*}

\end{proof}
This leads to a lower bound on the maximal principal curvatures of an immersed submanifold in the unit ball.
We focus on the asymptotic behaviour for high dimensions.
\begin{cor}\label{AsymptoticPrincCurv}
  Let \(M\) be an \((n-1)\)-dimensional closed spin manifold with non-vanishing Rosenberg index which is immersed into the \(n\)-dimensional unit ball.
  Then, asymptotically as \(n \to \infty\), the maximum of the absolute values of its principal curvatures is at least \(2 n / C\).
\end{cor}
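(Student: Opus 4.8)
The plan is to manufacture a family of $\mathcal{KO}$-bands, realized as normal tubes around $M$, that are immersed into a ball of controlled radius, and then to feed them into \cref{KOWidthBall}. Write $f \colon M \looparrowright \Ball_1^n \subset \R^n$ for the given immersion, and let $\Lambda$ denote the supremum of $|\kappa|$ over all principal curvatures $\kappa$ of $f$; this is finite since $M$ is closed, and in fact $\Lambda > 0$ because a closed hypersurface of $\R^n$ cannot be totally geodesic. As $M$ is spin it is orientable, so the rank-one normal bundle of $f$ is trivial and I would fix a unit normal field $\nu$ along $f$. For each $\rho$ with $0 < \rho < 1/\Lambda$ I would consider the band $V_\rho \coloneqq M \times [-\rho, \rho]$, with $\partial_\pm V_\rho = M \times \{\pm\rho\}$, together with the tube map $F_\rho \colon V_\rho \to \R^n$, $F_\rho(p,t) = f(p) + t\,\nu(p)$.

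The first step is to verify that $F_\rho$ is an immersion with image in $\Ball_{1+\rho}^n$ and that $V_\rho$ lies in the class $\mathcal{KO}$. By the Weingarten equation the differential of $F_\rho$ at $(p,t)$ restricts to $\D f_p \circ (\operatorname{Id}_{T_pM} - t\,S_p)$ on $T_pM$ and sends $\partial_t$ to $\nu(p)$; since the shape operator $S_p$ is symmetric with eigenvalues in $[-\Lambda, \Lambda]$, the operator $\operatorname{Id} - t\,S_p$ is invertible (indeed positive) whenever $|t| \le \rho < 1/\Lambda$, so $F_\rho$ is an immersion, and $|F_\rho(p,t)| \le |f(p)| + |t| < 1 + \rho$ forces the image into $\Ball_{1+\rho}^n$. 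The band $V_\rho$ is spin because $M$ is, and equipping it with the Mishchenko bundle $\mathcal{L}_{V_\rho}$ of finitely generated projective Hilbert $\Cstar(\pi_1 M)$-modules (via $\pi_1 V_\rho = \pi_1 M$), the Dirac operator on $\partial_- V_\rho \cong M$ twisted by $\mathcal{L}_{V_\rho}|_{\partial_- V_\rho} = \mathcal{L}_M$ has index $\ind(\Dirac_{M, \mathcal{L}_M}) = \alpha(M) \neq 0$ in $\KO_{n-1}(\Cstar(\pi_1 M))$. Hence $V_\rho \in \mathcal{KO}$.

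The second step is to compute the pulled-back metric and the resulting width. One obtains $F_\rho^{\ast} g_{\R^n} = g_M\bigl((\operatorname{Id} - t\,S)\,\cdot\,,\,(\operatorname{Id} - t\,S)\,\cdot\,\bigr) + \D t^2$ on $T(M \times [-\rho,\rho])$, where $g_M = f^{\ast} g_{\R^n}$; in particular it dominates $\D t^2$, so every curve joining $\partial_- V_\rho$ to $\partial_+ V_\rho$ has length at least $2\rho$, while the segments $t \mapsto (p,t)$ attain this value, whence $\width(V_\rho, F_\rho^{\ast} g_{\R^n}) = 2\rho$. Inserting the immersion $F_\rho \colon V_\rho \looparrowright \Ball_{1+\rho}^n$ into the definition of $\mathcal{KO}$-width and applying \cref{KOWidthBall} to the ball of radius $1+\rho$ gives $2\rho \le \width_{\mathcal{KO}}(\Ball_{1+\rho}^n) \le (1+\rho)\,C/n$. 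Letting $\rho \uparrow 1/\Lambda$ and rearranging yields $1/\Lambda \le C/(2n-C)$, i.e.\ $\Lambda \ge 2n/C - 1$ for $n > C/2$, which is $\bigl(1-o(1)\bigr)\,2n/C$ as $n \to \infty$.

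The main point requiring care is keeping the normal tube inside a ball of controlled radius: since $\rho$ must be taken close to $1/\Lambda$, which may be large, the tube around $f(M)$ need not remain inside the unit ball, and passing to $\Ball_{1+\rho}^n$ is what costs the lower-order term $-1$ in the final estimate. The remaining ingredients --- the Weingarten computation of $\D F_\rho$, the warped-product form of the induced metric with the ensuing width equality, and the identification of the twisted boundary index with $\alpha(M)$ --- are routine. It is worth noting that the factor $\sqrt{(n-1)/n}$ hidden in $C_n$ cancels in \cref{KOWidthBall}, which is exactly why the asymptotic lower bound comes out in terms of $C$ rather than $C_n$.
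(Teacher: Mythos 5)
Your proposal is correct and follows essentially the same route as the paper: you build the normal tube $M\times[-\rho,\rho]$ (the paper phrases this via the normal exponential map), note it is a $\mathcal{KO}$-band of width $2\rho$ immersed in a ball of radius $1+\rho$, and apply \cref{KOWidthBall}. The only difference is bookkeeping in the limit --- you let $\rho\uparrow 1/\Lambda$ and get $\Lambda\ge 2n/C-1$, while the paper keeps $\rho<\varepsilon$ and gets $2n/((1+\varepsilon)C)$ --- and both give the asymptotic bound $2n/C$ (just replace your strict inequality $|F_\rho(p,t)|<1+\rho$ by $\le 1+\rho$).
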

\begin{proof}
  Let \(\varepsilon > 0\).
  Assume that \(n\) is sufficiently large such that
  \[ \delta \coloneqq \frac{(1+\varepsilon) C}{n} < 2 \varepsilon.\]
  We then claim that the maximum of the principal curvatures of \(M \looparrowright \Ball_1^n(0)\) must be at least \(2 \delta^{-1}\).
  Indeed, if not, the normal exponential map \(\exp^\perp \colon M \times \R \to \R^n\) was a local diffeomorphism on \(M \times [-l, l]\) for some \(\varepsilon > l > \delta /2\).
  Since \(\exp^\perp(M \times [-l, l])\) is contained in the \((1+\varepsilon)\)-ball, \cref{KOWidthBall} would imply \(2l \leq \delta\), a contradiction.
  Since \(2 \delta^{-1} = 2n / ((1+\varepsilon)C)\), this proves the claim.
\end{proof}
In particular, this applies to the Hitchin spheres mentioned in \cref{ex:Exotic}.
Note that every exotic sphere admits a codimension one immersion into Euclidean space.
Thus \cref{AsymptoticPrincCurv} improves Gromov's observation~\cite[668]{Gromov:MetricInequalitiesScalar} that the maximum of the principal curvatures must grow at least proportionally to \(\sqrt{n}\) in this situation.

We now turn to examples of manifolds of \emph{infinite} \(\mathcal{KO}\)-width.

\begin{ex}
  Let \(M\) be a closed spin manifold with non-vanishing Rosenberg index.
  Then \(M \times \Sphere^1\) has infinite \(\mathcal{KO}\)-width.
  This follows from the immersion \(M \times \R \looparrowright M \times \Sphere^1\) induced by the universal covering of \(\Sphere^1\).
\end{ex}

More generally:

\begin{ex}
  Let \(M\) be a closed spin manifold which admits a codimension one submanifold \(N \subset M\) with trivial normal bundle.
  Let \(\pi \leq \pi_1 M\) be the image of the homomorphism \(\pi_1 N \to \pi_1 M\) induced by the inclusion.
  Suppose that the image of the Rosenberg index of \(N\) in \(\KO_{n-1}(\Cstar \pi)\) does not vanish.
  Then the covering \(\bar{M} \to M\) with \(\pi_1 \bar{M} = \pi\) contains \(N\) as a separating hypersurface which together with the assumption implies that \(M\) has infinite \(\mathcal{KO}\)-width.
  It can also be shown that the Rosenberg index of these examples is non-zero, see \cite[Theorem~1.4]{NitscheSchickZeidler:Transfer}, \cite[Theorem~1.7]{Zeidler:IndexObstructionPositive}.
\end{ex}

\begin{ex}
  Let \(X\) satisfy the hypotheses of \cref{QuadraticDecay}.
  That is, \(X\) is a complete and connected Riemannian spin manifold; there exists a closed connected submanifold \(M \subseteq X\) of codimension two with trivial normal bundle; the inclusion induces \(\pi_1 M \hookrightarrow \pi_1 X\), \(\pi_2 M \twoheadrightarrow \pi_2 X\); the Rosenberg index of \(M\) is non-zero.
  Then \(\width_{\mathcal{KO}}(X) = \infty\) because, using the notation from the proof of \cref{QuadraticDecay} in \cref{sec:BandWidth}, there are arbitrarily wide \(\mathcal{KO}\)-bands \(V \hookrightarrow W \hookrightarrow \bar{X} \looparrowright X\).

  In particular, a closed manifold which satisfies the conditions of the codimension two obstruction of \citeauthor{HankePapeSchick:CodimensionTwoIndex}~\cite{HankePapeSchick:CodimensionTwoIndex} has infinite \(\mathcal{KO}\)-width.
  For this example Kubota~\cite{Kubota:RelativeMishchenko,KubotaSchick:Codim2} recently proved that the maximal Rosenberg index does not vanish.
\end{ex}

With a simple product construction, one can produce further examples of manifolds with infinite \(\mathcal{KO}\)-width.

\begin{prop}\label{ProductInfiniteWidth}
  Let \(N\) be a closed spin manifold such that the Rosenberg index \(\alpha(N) \in \KO_n(\Cstar\pi_1 N)\) induces an injective map
  \begin{equation}
    \KO_{\ast}(A) \xrightarrow{\blank \times \alpha(N)} \KO_{\ast+n}(A \otimes \Cstar\pi_1 N) \label{ExtProductMap}
  \end{equation}
  for every unital Real \(\Cstar\)-algebra \(A\).
  Then, if a closed manifold \(M\) has infinite \(\mathcal{KO}\)-width, so does \(M \times N\).
\end{prop}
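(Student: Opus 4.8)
The plan is to take a wide $\mathcal{KO}$-band in $M$ and cross it with $N$ to produce a wide $\mathcal{KO}$-band in $M \times N$, using the injectivity hypothesis on the exterior product with $\alpha(N)$ to guarantee that the twisted boundary index of the product band is still non-zero. Concretely, fix a Riemannian metric on $M$ and on $N$, and equip $M \times N$ with the product metric. Let $l > 0$ be arbitrary; since $\width_{\mathcal{KO}}(M) = \infty$, there is a band $V \in \mathcal{KO}$ of dimension $\dim M$, an immersion $b \colon V \looparrowright M$ with $\width(V, b^*g_M) \geq l$, a Real unital $\textCstar$-algebra $A$, and a flat bundle $E \to V$ of finitely generated projective Hilbert $A$-modules such that $\ind(\Dirac_{\partial_- V, E|_{\partial_- V}}) \neq 0 \in \KO_{\dim M - 1}(A)$.

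The next step is to form the product band $V \times N$, with $\partial_\pm(V \times N) \coloneqq (\partial_\pm V) \times N$; this is again a compact proper band, of dimension $\dim(M \times N)$, and it is spin since both $V$ and $N$ are. The immersion $b \times \id_N \colon V \times N \looparrowright M \times N$ pulls back the product metric to $b^*g_M \oplus g_N$, and since distance in a Riemannian product is at least the distance in the first factor, $\width(V \times N, (b \times \id_N)^*(g_M \oplus g_N)) \geq \width(V, b^*g_M) \geq l$. For the index datum on $V \times N$, take the Real $\textCstar$-algebra $A \otimes \Cstar\pi_1 N$ and the flat bundle $E \boxtimes \mathcal{L}_N$ — the external tensor product of $E$ (pulled back to $V \times N$) with the Mishchenko line bundle of $N$ (pulled back to $V \times N$); this is a flat bundle of finitely generated projective Hilbert $(A \otimes \Cstar\pi_1 N)$-modules. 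On the boundary component $(\partial_- V) \times N$ the twisted Dirac operator is $\Dirac_{\partial_- V, E|_{\partial_- V}} \tensgr 1 + 1 \tensgr \Dirac_{N, \mathcal{L}_N}$ up to the usual Clifford-algebra bookkeeping, so by the multiplicativity of the index under external product (as recalled in \cref{AppendixInj}) its index is $\ind(\Dirac_{\partial_- V, E|_{\partial_- V}}) \times \alpha(N) \in \KO_{\dim(M\times N) - 1}(A \otimes \Cstar\pi_1 N)$. By hypothesis \labelcref{ExtProductMap}, the map $\blank \times \alpha(N)$ is injective, so this index is non-zero because $\ind(\Dirac_{\partial_- V, E|_{\partial_- V}})$ is. Hence $V \times N \in \mathcal{KO}$, and since $l$ was arbitrary, $\width_{\mathcal{KO}}(M \times N) = \infty$.

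The main obstacle is the index-multiplicativity identity for the boundary operator of the product band: one must check that twisting the Dirac operator of $\partial_- V \times N$ by $E \boxtimes \mathcal{L}_N$ genuinely computes the external Kasparov/KO-theory product of the two boundary indices, with the correct identification $\KO_{\dim M -1}(A) \otimes \KO_{\dim N}(\Cstar\pi_1 N) \to \KO_{\dim(M\times N)-1}(A \otimes \Cstar\pi_1 N)$ of degrees and coefficient algebras. This is standard for $\Cl$-linear Dirac operators over $\textCstar$-algebras but requires care with graded tensor products and with the fact that $E$ need only be projective (not free); I would cite the treatment in \cref{AppendixInj} for this point rather than reprove it. A minor secondary point is the non-smoothness of $\partial(\Nbh_l(\cdots))$-type boundaries, but here we are working directly with a given band $V$, so no such issue arises.
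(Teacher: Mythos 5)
Your proposal is correct and follows essentially the same route as the paper: cross a wide $\mathcal{KO}$-band $V \looparrowright M$ with $N$, use the product formula of \cref{AppendixInj} together with the injectivity hypothesis \labelcref{ExtProductMap} to see $V \times N \in \mathcal{KO}$, and note that the width of the product band (with product metric) is at least that of $V$. The paper's proof is just a terser version of this (it even records the width of the product as an equality, though your inequality suffices), so nothing further is needed.
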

\begin{proof}
  We can assume that \(M \times N\) is endowed with a product metric.
  By the product formula for the index class (compare \cref{AppendixInj}) and injectivity of \labelcref{ExtProductMap}, we have \(V \times N \in \mathcal{KO}\) for every \(V \in \mathcal{KO}\).
  For a product metric on \(g \oplus h\) on \(V \times N\), the equality \(\width(V \times N, g\oplus h) = \width(V, g)\) holds.
  Thus, if \(b \colon V \looparrowright M\) is a band of width \(\geq l\), then \(b \times \operatorname{id} \colon V \times N \looparrowright M \times N\) is also a band of width \(\geq l\), so the result follows.
\end{proof}

In the Appendix \labelcref{AppendixInj}, we provide a sufficient condition for the injectivity of \labelcref{ExtProductMap} for the reduced group \textCstar-algebra which includes all non-positively curved manifolds \(N\).
In particular, it applies to \(N = \Sphere^1\) which was already used in the proof of \cref{QuadraticDecay}.

Another simple case are Bott manifolds \(N = B\).
This is a simply connected \(8\)-dimensional spin manifold such that its \(\alpha\)-invariant is the Bott generator.
Then \labelcref{ExtProductMap} is the Bott periodicity isomorphism and in particular injective.
Thus, if \(M\) has infinite \(\mathcal{KO}\)-width, so does \(M \times B^k\) for every \(k \geq 0\).
A closed manifold \(M\) is said to \emph{stably} admit a metric of positive scalar curvature, if \(M \times B^k\) admits a metric of positive scalar curvature for some \(k \geq 0\).
Hence we obtain the following strengthening of the obstruction from \cref{InfiniteKOWidthObstruction}.
\begin{cor}
  Let \(M\) be a closed manifold of infinite \(\mathcal{KO}\)-width.
  Then \(M\) does not stably admit a metric of positive scalar curvature.
\end{cor}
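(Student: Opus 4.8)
The plan is to reduce the statement to \cref{ProductInfiniteWidth} and \cref{InfiniteKOWidthObstruction} by using the Bott manifold $B$ as the factor $N$. So the first step is to check that $B$ satisfies the injectivity hypothesis of \cref{ProductInfiniteWidth}. Since $B$ is simply connected, $\Cstar\pi_1 B$ is the unit object for the (maximal or reduced) tensor product of Real \textCstar-algebras, so $A \otimes \Cstar\pi_1 B \cong A$ naturally, and under this identification the external product map \labelcref{ExtProductMap} becomes a self-map $\KO_\ast(A) \xrightarrow{\blank \times \alpha(B)} \KO_{\ast+8}(A)$ given by multiplication with the class $\alpha(B) \in \KO_8(\Cstar\pi_1 B) = \KO^{-8} \cong \Z$. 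By construction of the Bott manifold, $\alpha(B)$ is a generator of this group, so multiplication by $\alpha(B)$ is precisely the (real) Bott periodicity isomorphism $\KO_\ast(A) \xrightarrow{\cong} \KO_{\ast+8}(A)$, and in particular it is injective for every unital Real \textCstar-algebra $A$.

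With this verified, \cref{ProductInfiniteWidth} applies with $N = B$: if $M$ has infinite $\mathcal{KO}$-width, then so does $M \times B$. Iterating $k$ times — or, equivalently, applying \cref{ProductInfiniteWidth} directly to $N = B^k$, whose $\alpha$-invariant is the $k$-th power of the Bott generator and hence still induces an iterate of Bott periodicity — one concludes that $M \times B^k$ has infinite $\mathcal{KO}$-width for every $k \geq 0$.

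Finally I would invoke \cref{InfiniteKOWidthObstruction}: a closed manifold of infinite $\mathcal{KO}$-width admits no metric of positive scalar curvature. Applying this to each $M \times B^k$ shows that none of these manifolds carries a metric of positive scalar curvature. Since, by definition, $M$ stably admits a metric of positive scalar curvature exactly when $M \times B^k$ does for some $k \geq 0$, this proves that $M$ does not stably admit a metric of positive scalar curvature. There is no genuine obstacle beyond the two cited results; the only point needing a moment's care is the identification of the external product with $\alpha(B)$ as Bott periodicity, which rests on the defining property of the Bott manifold that $\alpha(B)$ generates $\KO^{-8}$.
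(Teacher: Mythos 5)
Your proposal is correct and follows essentially the same route as the paper: identify the external product with $\alpha(B)$ as the Bott periodicity isomorphism (hence injective), apply \cref{ProductInfiniteWidth} to get infinite $\mathcal{KO}$-width of $M \times B^k$, and conclude via \cref{InfiniteKOWidthObstruction} and the definition of stable positive scalar curvature. Your explicit check that simple connectivity of $B$ makes $\Cstar\pi_1 B$ the tensor unit is just a slightly more detailed spelling-out of what the paper states in one line.
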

The stable Gromov–Lawson–Rosenberg conjecture~\cite[Conjecture~4.17]{RosenbergStolz:PSCSurgeryConnections} predicts that a spin manifold stably admits a metric of positive scalar curvature if and only if its Rosenberg index vanishes.
Unlike the unstable conjecture, this is known for a very large class of fundamental groups, and no counterexample is known.
Indeed, Stolz~\cite{Stolz:ManifoldsOfPSC} proved that the stable conjecture holds if the real Baum--Connes assembly map of \(\pi_1 M\) is injective.
Together with the meta-conjecture~\cite[Conjecture~1.5]{Schick:ICM}, this motivates the following.
\begin{conj}
  Every closed spin manifold of infinite \(\mathcal{KO}\)-width has non-vanishing Rosenberg index.
\end{conj}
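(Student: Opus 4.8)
The plan is to deduce the conjecture from the \emph{stable} Gromov--Lawson--Rosenberg conjecture, combined with the product construction of \cref{ProductInfiniteWidth}. Fix a closed connected spin manifold \(M\) of infinite \(\mathcal{KO}\)-width and suppose, for contradiction, that its Rosenberg index vanishes. The point is that infinite \(\mathcal{KO}\)-width is inherited under taking products with Bott manifolds, so in particular \(M\) cannot \emph{stably} carry positive scalar curvature, whereas the stable conjecture would let one promote a vanishing Rosenberg index into exactly such a stable positive scalar curvature metric.

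Concretely, I would proceed as follows. First, apply \cref{ProductInfiniteWidth} with \(N = B\) a Bott manifold: here the external product map \labelcref{ExtProductMap} is the Bott periodicity isomorphism and hence injective, so \(M \times B^k\) has infinite \(\mathcal{KO}\)-width for every \(k \geq 0\). Second, invoke \cref{InfiniteKOWidthObstruction} to conclude that no \(M \times B^k\) admits a metric of positive scalar curvature; that is, \(M\) does not stably admit positive scalar curvature. Third, recall Stolz's theorem~\cite{Stolz:ManifoldsOfPSC}: if the real Baum--Connes assembly map \(\KO_\ast(B\pi_1 M) \to \KO_\ast(\Cstar \pi_1 M)\) is injective, then the stable Gromov--Lawson--Rosenberg conjecture holds for \(\pi_1 M\), so a spin manifold with fundamental group \(\pi_1 M\) and vanishing Rosenberg index stably admits positive scalar curvature. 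Under this injectivity hypothesis, \(\alpha(M) = 0\) would therefore contradict the second step, and we conclude \(\alpha(M) \neq 0\).

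The main obstacle is that this argument is only conditional: it rests on injectivity of the real Baum--Connes assembly map for \(\pi_1 M\). This is known for a very large class of groups---a-T-menable groups after Higson--Kasparov, hyperbolic and \(\mathrm{CAT}(0)\) groups, and every fundamental group arising in the examples of \cref{sec:KOWidth}---but it is open in general, and no unconditional proof of the stable Gromov--Lawson--Rosenberg conjecture is available. An unconditional route would instead have to take a sequence of \(\mathcal{KO}\)-bands \(V_l \looparrowright M\) with \(\width(V_l) \to \infty\) and manufacture directly from them a single non-vanishing class in \(\KO_\ast(\Cstar \pi_1 M)\). The difficulty is that these bands carry mutually unrelated coefficient algebras \(A\) and flat bundles \(E\) with no built-in compatibility with \(\pi_1 M\), so one would need a general transfer of all such Dirac-type index obstructions into the Rosenberg index---precisely the content of Schick's meta-conjecture~\cite[Conjecture~1.5]{Schick:ICM}, which is exactly what is currently missing.
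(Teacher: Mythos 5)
This statement is stated in the paper as an open \emph{conjecture}, and the paper gives no proof of it---it only motivates it by exactly the reasoning you describe. Your conditional argument (infinite \(\mathcal{KO}\)-width passes to \(M \times B^k\) by \cref{ProductInfiniteWidth}, hence \(M\) does not stably admit positive scalar curvature by \cref{InfiniteKOWidthObstruction}, hence \(\alpha(M) \neq 0\) via Stolz's theorem whenever the real Baum--Connes assembly map of \(\pi_1 M\) is injective) is precisely the paper's own motivation, and you correctly identify that the general, unconditional statement remains open; so your write-up matches the paper's approach and correctly refrains from claiming a proof.
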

Note that for all the examples we mention above this is the case.
A slightly weaker question would be to ask the same for infinite \(\mathcal{K}\)-width or \(\mathcal{K}_{\Q}\)-width.

\appendix

\section{The partitioned manifold index theorem}\label{AppendixPMI}
In this appendix, we add more detail to the brief discussion from \cref{sec:TechnicalTheorem} on the partitioned manifold index theorem for Callias-type operators.
Note that the following theorem is already known in various different guises.
For instance, \citeauthor{Cecchini:CalliasTypePSC}~\cite{Cecchini:CalliasTypePSC} provides the same statement for complex K-theory.
The result is also implicit in the recent work of \citeauthor{Ebert:IndexTheorySpaces}~\cite{Ebert:IndexTheorySpaces}, where a family version of this index theory is developed.
In spirit, this approach to the partitioned manifold index theorem goes back to Higson~\cite{Higson:CobordismInvariance} and Bunke~\cite{Bunke:RelativeIndexCallias}.

\begin{thm}\label{PartitionedManifoldTheorem}
  Let \(W^n\) be a complete spin manifold together with a proper smooth map \(x \colon W \to \R\) with \(\|\nabla x\|_\infty \leq L < \infty\).
  Let \(A\) be some Real \textCstar-algebra and let \(E \to W\) be a smooth bundle of finitely generated projective Hilbert \(A\)-modules \(E\) furnished with a metric connection.
  Let \(\Dirac_{W,E}\) denote the spinor Dirac operator of \(W\) twisted by \(E\).

  Then the operator
  \[
    B = \Dirac_{W, E} \tensgr 1 + x \tensgr \epsilon
  \]
  with initial domain the compactly supported smooth sections of \(\mathcal{E} \coloneqq \SpinBdl_W \tensgr E \tensgr \Cl_{0,1}\), viewed as a bundle of of graded Hilbert \(\mathcal{A} \coloneqq \Cl_{n,0} \tensgr A \tensgr \Cl_{0,1}\)-modules, is an essentially self-adjoint regular operator with compact resolvents.
  In particular, \(B\) has a Fredholm index \(\ind_{\PM}(\Dirac_{W,E}, x) \coloneqq \ind(B) \in \KO_0(\mathcal{A}) = \KO_{n-1}(A)\).

  Moreover, this index has the following properties:
  \begin{myenumi}
    \item\label{item:coarseEq} If \(\tilde{x} \colon W \to \R\) is another proper smooth functions with bounded gradient such that \(x - \tilde{x} \in \ELL^\infty\), then \(\ind_{\PM}(\Dirac_{W,E}, \tilde{x}) = \ind_{\PM}(\Dirac_{W,E}, x)\).
    \item\label{item:PMIT} Let \(a \in \R\) be a regular value of \(x\) and \(M \coloneqq x^{-1}(a)\).
    Then
    \[
      \ind_{\PM}(\Dirac_{W,E}, x) = \ind(\Dirac_{M, E|_{M}}) \in \KO_{n-1}(A).
    \]
  \end{myenumi}
\end{thm}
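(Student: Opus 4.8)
Throughout write $B_0 \coloneqq \Dirac_{W,E}\tensgr 1$ and $\Phi \coloneqq x\tensgr\epsilon$, so that $B = B_0 + \Phi$ acts on sections of $\mathcal{E} = \SpinBdl_W \tensgr E \tensgr \Cl_{0,1}$, linearly over $\mathcal{A} = \Cl_{n,0}\tensgr A\tensgr\Cl_{0,1}$. Since every Real structure sits only in the coefficient algebra, all analytic steps are word-for-word those of the complex case; the only genuinely new bookkeeping concerns Clifford algebras and the identification $\KO_0(\mathcal{A})\cong\KO_{n-1}(A)$. For the functional-analytic assertions the plan is to reduce to known facts about $B_0$. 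Over a complete manifold, $B_0$ is essentially self-adjoint and regular with square $\Dirac_{W,E}^2\tensgr 1$ by the standard theory of Dirac operators linear over a $C^*$-algebra, see~\cite{Ebert:EllipticRegularityDirac}. As $\Phi$ is pointwise self-adjoint of order zero, $B$ has the same principal symbol as $B_0$ and hence unit finite propagation speed for $\partial_s u = \sqrt{-1}\,Bu$; together with completeness, Chernoff's method yields essential self-adjointness of $B$, and the local elliptic estimates of~\cite{Ebert:EllipticRegularityDirac} upgrade this to regularity. For the compact resolvent I would invoke the Lichnerowicz-type identity from the theorem, $B^2 = \Dirac_{W,E}^2\tensgr 1 + \clm(\D x)\tensgr\epsilon + x^2 \geq \Dirac_{W,E}^2\tensgr 1 + (x^2 - L)$: properness of $x$ forces $x^2\to\infty$ at infinity, and a standard Rellich-type argument (as in~\cite{Cecchini:CalliasTypePSC}) shows that the spectral projections $\chi_{[-R,R]}(B)$ are compact, so $B$ has compact resolvents and is Fredholm with $\ind_{\PM}(\Dirac_{W,E},x)\coloneqq\ind(B)\in\KO_0(\mathcal{A})=\KO_{n-1}(A)$. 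Property~\labelcref{item:coarseEq} is then quick: for $t\in[0,1]$ put $B_t\coloneqq B + t(\tilde{x}-x)\tensgr\epsilon$; as $\tilde{x}-x$ is bounded, $B_t$ is a bounded self-adjoint perturbation of $B$, hence again self-adjoint and regular, the resolvent identity shows $(B_t-\sqrt{-1})^{-1}$ is compact because $(B-\sqrt{-1})^{-1}$ is, and $t\mapsto(B_t-\sqrt{-1})^{-1}$ is norm-continuous, so $t\mapsto\ind(B_t)\in\KO_0(\mathcal{A})$ is locally constant, hence constant.

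The substantive statement is~\labelcref{item:PMIT}. Translating, I may assume $a=0$ and set $M = x^{-1}(0)$; the plan is an \emph{excision} step followed by a \emph{model computation}. Excision asserts that $\ind_{\PM}(\Dirac_{W,E},x)$ depends only on the germ of $(W,g_W,E,\nabla^E,x)$ along $M$: given a second such tuple $(W',g',E',\nabla',x')$ and an isometry identifying open neighborhoods of $x^{-1}(0)$ and $(x')^{-1}(0)$ compatibly with all the data, one has $\ind_{\PM}(\Dirac_{W,E},x) = \ind_{\PM}(\Dirac_{W',E'},x')$. This is the Hilbert-module, $\KO$-theoretic form of Bunke's relative index theorem; I would prove it by the cut-and-paste homotopy of Higson and Bunke, transplanting the Callias operators into a maximal Roe-type algebra with Real coefficients and using coarse excision, following the Callias-operator treatments of~\cite{Cecchini:CalliasTypePSC,Ebert:IndexTheorySpaces} — none of which is affected by the passage from complex to Real $C^*$-algebras.

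Granting excision, I would use the regular-value hypothesis: the gradient flow of $x/\lvert\nabla x\rvert^2$ produces a collar $M\times(-\delta,\delta)\hookrightarrow W$ on which $x$ is the second coordinate, and after a compactly supported homotopy of the metric and connection inside this collar — harmless since the index is invariant under such homotopies by the norm-resolvent-continuity argument used for~\labelcref{item:coarseEq} — I may assume the collar is a Riemannian product with $E$ pulled back from $M$. By excision the index then equals the corresponding one on the model $W = M\times\R$ with product metric, $E$ pulled back, and $x$ the projection to $\R$. On this model the Clifford identification $\Cl_{n,0}\cong\Cl_{n-1,0}\tensgr\Cl_{1,0}$ gives $\Dirac_{M\times\R,E} = \Dirac_{M,E|_M}\tensgr 1 + 1\tensgr(e\,\partial_t)$, hence
\[
  B = \Dirac_{M,E|_M}\tensgr 1 + 1\tensgr D_{\R}, \qquad D_{\R} = e\,\partial_t\tensgr 1 + t\tensgr\epsilon,
\]
where $D_{\R}$ acts on $\mathrm{L}^2(\R)\tensgr\Cl_{1,0}\tensgr\Cl_{0,1}$. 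A direct computation identifies $D_{\R}^2$ with a one-dimensional harmonic oscillator plus a bounded Clifford term; $D_{\R}$ has one-dimensional kernel spanned by $t\mapsto e^{-t^2/2}$ and represents the canonical generator of $\KO_0(\Cl_{1,1})\cong\KO_0(\R)=\Z$. By the product formula for indices of such operators, $\ind(B) = \ind(\Dirac_{M,E|_M})\tensgr[\mathrm{gen}]$, which under $\KO_0(\Cl_{n,0}\tensgr A\tensgr\Cl_{0,1})\cong\KO_{n-1}(A)$ is exactly $\ind(\Dirac_{M,E|_M})$, proving~\labelcref{item:PMIT}.

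The main obstacle is the excision/relative index theorem for Callias operators over Real Hilbert modules: once it is available, the remainder is the classical model computation together with routine Clifford-algebra bookkeeping. Accordingly the appendix should spend most of its length setting up precisely enough of the Hilbert-module Callias calculus to quote the relative index theorem from~\cite{Cecchini:CalliasTypePSC,Ebert:IndexTheorySpaces}.
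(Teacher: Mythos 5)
Your overall strategy coincides with the paper's: establish self-adjointness, regularity and compact resolvents via the coercivity $B^2 \geq x^2 - L$ (the paper quotes \cite{Ebert:EllipticRegularityDirac} for exactly this), prove locality/excision, reduce to the cylinder $M \times \R$, and finish with the harmonic-oscillator model operator and the external product formula. Your treatment of \labelcref{item:coarseEq} by a linear bounded perturbation and norm-resolvent continuity is a valid alternative to the paper's device, which instead rescales the operator to an asymptotic family $B_s = s^{-1}(\Dirac_{W,E}\tensgr 1 + x \tensgr \epsilon) + (s-1)\tensgr\epsilon$ and works with asymptotic morphisms in the spectral picture; that rescaling is not cosmetic, since the propagation speed $s^{-1} \to 0$ and the divergent potential term are precisely what drive the paper's self-contained proof of the cut-and-paste lemma that you propose to quote from \cite{Cecchini:CalliasTypePSC,Ebert:IndexTypeSpaces}-type sources instead.

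The genuine gap is your collar-straightening step. You deform the metric (and connection) in a collar of $M$ to a Riemannian product and declare this ``harmless since the index is invariant under such homotopies by the norm-resolvent-continuity argument used for \labelcref{item:coarseEq}.'' That argument only covers bounded perturbations of $B$ acting on a \emph{fixed} Hilbert $\mathcal{A}$-module: varying the connection is indeed a bounded endomorphism-valued perturbation, but varying the Riemannian metric changes the volume measure, the spinor bundle and the principal symbol, so the operators for different metrics do not even live on the same module and the bounded-perturbation/homotopy argument does not apply as stated. Moreover, your germ-excision statement needs this straightening, because the germ of a general $W$ along $M$ is not isometric to the germ of $M \times \R$ unless the collar is already a product. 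This can be repaired (identification of spinor bundles for nearby metrics, or a bordism-invariance argument), but it is exactly the point the paper's architecture avoids: its \cref{CutPasteLemma} requires agreement of all data only on one half $W^{\leq a}$, so the cylinder is glued in one half at a time, the metric interpolation happens entirely inside the replaced half, and the retained half is never modified — only the function $x$ is adjusted near $M$, which is covered by \labelcref{item:coarseEq}. Either adopt that half-at-a-time replacement or supply a genuine argument for index invariance under compactly supported metric deformations; as written, that step does not follow from what you have proved.
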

The proof we provide below follows the standard strategy of cutting and pasting to reduce it to the product situation.
However, it is different from the existing literature in that we use the spectral picture of \(\K\)-theory and asymptotic morphisms to define and manipulate the index.
This is similar to the approach the author took in \cite{Zeidler:PositiveScalarCurvature}.
The spectral picture of graded K-theory goes back to Trout~\cite{Trout:GradedKTheory} and can be viewed as simplified special case of E-theory. For background material, see for instance \cite[Chapters~1--2]{Higson:GroupAstAlgebras}, \cite[Section~2.9]{YuWillett:HigherIndexTheory}.
The spectral picture works for Real \textCstar-algebras without modification and is well-suited to describe index classes of Dirac-type operators, in particular taking Clifford-algebra coefficients and gradings into account.
For our purposes, the main observation is that if \(B\) is an odd regular self-adjoint operator with compact resolvents on some graded Hilbert \(\mathcal{A}\)-module \(\mathcal{X}\), then the functional calculus yields a grading-preserving \(\ast\)-homomorphism \(\Cgr \to \Kom_{\mathcal{A}}(\mathcal{X})\), \(f \mapsto f(B)\), where \(\Kom_{\mathcal{A}}(\mathcal{X})\) denotes the compact operators on \(\mathcal{X}\) in the sense of Hilbert \(A\)-modules.
Here \(\Cgr\) denotes the \textCstar-algebra \(\Cz(\R)\) endowed with the grading defined by the decomposition into even and odd functions.
Such a \(\ast\)-homomorphism represents a class in \(\KO_0(\mathcal{A})\) using the spectral picture, and this is the index of \(B\).
Moreover, in the spectral picture, the external product \(\KO_p(\mathcal{A}_1) \otimes \KO_q(\mathcal{A}_2) \to \KO_{p+q}(\mathcal{A}_1 \otimes \mathcal{A}_2)\) can be constructed  in such a way that it is evident that the product of the indices of suitable operators \(B_1\) and \(B_2\) is equal to the index of \(B_1 \tensgr 1 + 1 \tensgr B_2\), compare~\cite[Section~1.7]{Higson:GroupAstAlgebras}.

\begin{proof}[Proof of \cref{PartitionedManifoldTheorem}]
The differential operator \(B\) is a symmetric because the Dirac operator is and \(\epsilon^\ast = \epsilon \in \Cl_{0,1}\).
In the following, we will make no distinction between \(B\) with its initial domain and its closure.
By \cite[Theorem~1.14]{Ebert:EllipticRegularityDirac}, \(B\) is self-adjoint and regular (use for instance \(\sqrt{1+x^2}\) as a coercive function).
In particular, we have a functional calculus \(\Cz(\R) \to \Lin_{\mathcal{A}}(\ELL^2 \mathcal{E})\), \(f \mapsto f(B)\).
We have the formula
\begin{equation}
  B^2 = \Dirac_{W, E}^2 \tensgr 1 + \clm(\D x) \tensgr \epsilon + x^2 \label{AppendixSquareFormula}
\end{equation}
from which we deduce \(B^2 \geq x^2 - L\).
Hence (the proof of) \cite[Theorem~2.40]{Ebert:EllipticRegularityDirac} shows that \(B\) has compact resolvents, that is, \((B \pm \iu)^{-1} \in \Kom_{\mathcal{A}}(\ELL^2 \mathcal{E})\).
Since \(\Cz(\R)\) is generated by \((\mathrm{x}\pm\iu)^{-1}\) as a \textCstar-algebra, we deduce that \(f(B) \in \Kom_{\mathcal{A}}(\ELL^2 \mathcal{E})\) for every \(f \in \Cz(\R)\).
By the previous discussion, we thus obtain a graded \(\ast\)-homomorphism
\[
  \Phi \colon \Cgr \to  \Kom_{\mathcal{A}}(\ELL^2 \mathcal{E}),\quad f \mapsto f(B)
\]
which represents \(\ind_{\PM}(\Dirac_{W,E}, x) \coloneqq \ind(B) \coloneqq [\Phi] \in \KO_0(\mathcal{A})\) in the spectral picture of K-theory.

It will be convenient to allow more flexibility by working with an asymptotic family of \(\ast\)-homomorphisms instead of a single \(\ast\)-homomorphism.
Consider for each real number \(s \geq 1\) the operator
\begin{equation}\label{OperatorFamily}
  B_s = \frac{1}{s} \left( \Dirac_{W,E} \tensgr 1 + x \tensgr \epsilon\right) + (s-1) \tensgr \epsilon.
\end{equation}
Each \(B_s\) is an operator of the same type as the original operator \(B = B_1\) and we have  \(f(B_s) \in \Kom_{\mathcal{A}}(\ELL^2\mathcal{E})\) for each \(s \geq 1\), \(f \in \Cgr\).
Note that the corresponding version of \labelcref{AppendixSquareFormula} now yields the estimate
\begin{equation}
  B^2_s \geq \left( \frac{x}{s} + s - 1 \right)^2 - \frac{L}{s^2}. \label{BAsymptoticLowerBound}
\end{equation}
\begin{sloppypar}
Then \( \Phi_{s} \colon \Cgr \to  \Kom_{\mathcal{A}}(\ELL^2 \mathcal{E}), f \mapsto f(B_s)\) is a continuous family of \(\ast\)-homomorphisms.
In particular, we obtain an asymptotic morphism \(\Phi_{s} \colon \Cgr \dashrightarrow \Kom_{\mathcal{A}}(\ELL^2 \mathcal{E})\).
The class in E-theory represented by the asymptotic morphism \(\Phi_s\) is the same as the one represented by \(\Phi_1 = \Phi\) in the spectral picture.
\end{sloppypar}

To prove \labelcref{item:coarseEq}, let \(\tilde{B}_s\) and \(\tilde{\Phi}_s\) be the objects defined analogously with \(\tilde{x}\) instead of \(x\).
Then \(B_s - \tilde{B}_s = s^{-1} (x - \tilde{x}) \tensgr \epsilon\) is by assumption a bounded operator which goes to \(0\) in norm as \(s \to \infty\).
This implies that \(\Phi_s\) and \(\tilde{\Phi}_s\) are asymptotically equivalent and, in turn, that \(\ind_{\PM}(\Dirac_{W,E}, x)= [\Phi_s] = [\tilde{\Phi}_s] = \ind_{\PM}(\Dirac_{W,E}, \tilde{x})\).

The proof of \labelcref{item:PMIT} follows the standard strategy of reducing the problem to the cylinder by a cutting and pasting argument.

Start with the case \(W = \R\) and \(x\) the identity map and consider the operator \(B_{\R} = \Dirac_{\R} + x \epsilon\) acting on \(\ELL^2(\R, \Cl_{1,1})\), where \(\Dirac_{\R} = e_1 \frac{\D}{\D x}\).
The index class \(\ind_{\PM}(\Dirac_{\R}, x) = \ind(B_{\R})\) is equal to the generator \(1 \in \KO_0(\R)\).
This follows from a standard computation of the spectrum of the Harmonic oscilator \(H = - \frac{\D^2}{\D x^2} + x^2 - 1\).
See for instance~\cite[Section~1.13]{Higson:GroupAstAlgebras}, where this is worked out in this context.

More generally, consider the product situation \(W = M \times \R\), where \(M\) is a closed spin manifold, \(x\) is the projection on the second factor, and \(E = x^{\ast} E_M\) for some bundle \(E_M \to M\) of finitely generated projective Hilbert \(A\)-modules with a metric connection.
Then \(\Dirac_{W,E} =  \Dirac_{M, E_M} \tensgr 1 + 1 \tensgr \Dirac_{\R} \), where we decompose the bundle \(\SpinBdl_W \tensgr E\) as \((\SpinBdl_M \tensgr E_M) \boxtensgr \SpinBdl_{\R}\) .
From this it follows that the corresponding operator \(B\) can be rewritten as
\[
 B = \Dirac_{M, E_M} \tensgr 1 + 1 \tensgr B_{\R}.
\]
It follows that the index class of \(B\) is equal to the exterior product of the index classes of \(\Dirac_{M, E_M}\) and \(B_{\R}\).
Consequently, we obtain
\begin{equation}\label{eq:ProductCase}
  \ind_{\PM}(\Dirac_{W,E}, x) = \ind(\Dirac_{M, E_M}) \times \ind(B_{\R}) = \ind(\Dirac_{M, E_M}),
\end{equation}
where we used that \(\ind(B_{\R}) = 1\).
Hence \labelcref{item:PMIT} holds for the product case.

To reduce the general case to the product case, we use the following cutting and pasting lemma.
\begin{lem}\label{CutPasteLemma}
  For \(i \in \{1,2\}\), let \({W_i}\) be a complete \(n\)-dimensional spin manifold with a proper smooth map \(x_i \colon W_i \to \R\) with uniformly bounded gradient, \(E_i \to W_i\) a smooth bundle of finitely generated projective Hilbert \(A\)-modules furnished with a metric connection.
  Let \(a \in \R\) be a regular value of both \(x_i\) and set \(W_i^{\leq a} = x^{-1}_i((-\infty, a])\).
  Suppose that there exists an isometric diffeomorphism \(\gamma \colon W_2^{\leq a} \to W_1^{\leq a}\) which is covered by an isometry of the respective spinor bundles and a bundle isometry \(E_2|_{W_2^{\leq a}} \to E_1|_{W_1^{\leq a}}\).
  Furthermore, we assume that \(x_1 \circ \gamma = x_2\) on \(W_2^{\leq a}\).
  Then
  \[
    \ind_{\PM}(\Dirac_{W_1, E_1}, x_1) = \ind_{\PM}(\Dirac_{W_2, E_2}, x_2).
  \]

  The analogous variant of the statement for \(W_i^{\geq a} = x^{-1}_i([a, \infty))\) also holds.
  \end{lem}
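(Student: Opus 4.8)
The plan is to reduce the general situation to the product case already established via \labelcref{eq:ProductCase} by a standard cutting-and-pasting argument, using \cref{CutPasteLemma}. So the first step is to prove \cref{CutPasteLemma} itself: given the isometric data over $W_i^{\leq a}$, I want to interpolate between the two Callias-type operators. The key tool is the asymptotic family $B_s$ from \labelcref{OperatorFamily} together with the lower bound \labelcref{BAsymptoticLowerBound}, which says $B_s^2 \geq (x/s + s-1)^2 - L/s^2$. As $s \to \infty$, this forces the essential spectrum of $B_s$ to escape to infinity away from the locus where $x$ is very negative; concretely, on the region $x \geq a$ the operator $B_s$ becomes invertible for large $s$ (since there $x/s + s - 1 \to \infty$), so the index ``localizes'' near $W_i^{\leq a}$. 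I would make this precise by a Mayer--Vietoris / excision argument in the spectral (E-theory) picture: decompose $\ELL^2\mathcal{E}_i$ using a partition of unity subordinate to $W_i^{\leq a}$ and $W_i^{> a - 1}$, show that the contribution of the second piece vanishes asymptotically because $B_s$ is invertible there, and conclude that $[\Phi_s^{(i)}]$ depends only on the restriction of the data to $W_i^{\leq a}$. Since $\gamma$ identifies these restrictions (including spinor bundles, connections, and the functions $x_i$), the two index classes agree.

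The main obstacle is making the localization rigorous in the Hilbert-module / Real-$\Cstar$-algebra setting: one needs that multiplication by a cutoff function $\chi$ supported where $B_s$ is invertible yields $f(B_s)\chi \to 0$ in norm (in $\Kom_{\mathcal{A}}$) as $s \to \infty$, for $f \in \Cgr$. This follows from a resolvent estimate: where $x/s + s - 1 \geq c\sqrt{s}$, say, \labelcref{BAsymptoticLowerBound} gives $\|B_s u\| \geq (c\sqrt s /2)\|u\|$ for $u$ supported there (modulo handling the commutator $[\chi, \Dirac]$, which is bounded uniformly and divided by $s$ in $B_s$, hence also negligible). One must be slightly careful that the partition of unity has uniformly bounded gradient so that the commutator terms are $O(1/s)$. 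With this estimate in hand, the standard argument that asymptotic morphisms agreeing ``at infinity in the space direction'' represent the same E-theory class goes through verbatim as in \cite[Section~1.7]{Higson:GroupAstAlgebras}, and the Real structure plays no role since it is only in the coefficients.

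Granting \cref{CutPasteLemma}, the proof of \labelcref{item:PMIT} is then purely formal. Let $W$, $x$, $E$, $a$, $M = x^{-1}(a)$ be as in the statement. Choose a collar $M \times (a-\epsilon, a+\epsilon) \hookrightarrow W$ of $M$ on which $x$ is the projection to the second coordinate (possible since $a$ is a regular value; shrink the metric if necessary, or rather note the index only depends on coarse data). Build an auxiliary manifold $W' = W^{\leq a} \cup_M (M \times [a, \infty))$ obtained by gluing the cylinder $M \times [a,\infty)$ with $E' = $ (pullback of $E|_M$) onto $W^{\leq a}$ along the collar, with $x'$ equal to $x$ on $W^{\leq a}$ and the projection on the cylinder; this is complete and $x'$ is proper with bounded gradient. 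By \cref{CutPasteLemma} applied to $W$ and $W'$ over $W^{\leq a}$ (the $W_i^{\leq a}$-variant), $\ind_{\PM}(\Dirac_{W,E}, x) = \ind_{\PM}(\Dirac_{W',E'}, x')$. Now do the same on the other side: let $W'' = (M \times (-\infty, a]) \cup_M (M \times [a,\infty)) = M \times \R$ with the obvious $x''$ and $E'' = x''^* E|_M$; applying the $W_i^{\geq a}$-variant of \cref{CutPasteLemma} to $W'$ and $W''$ over $\{x \geq a\}$ gives $\ind_{\PM}(\Dirac_{W',E'}, x') = \ind_{\PM}(\Dirac_{W'',E''}, x'') = \ind_{\PM}(\Dirac_{M\times\R, x^*E|_M}, \mathrm{pr}_{\R})$. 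By \labelcref{eq:ProductCase} the latter equals $\ind(\Dirac_{M, E|_M})$, which completes the proof.

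One remark on the collar issue: strictly speaking the metric near $M$ need not be a product, so the gluing above does not produce an honest isometry unless one first perturbs the metric to a product near $M$. This is harmless because modifying the metric in a compact neighborhood of $M$ (keeping $a$ a regular value and $x$ unchanged outside) changes neither $\ind(\Dirac_{M,E|_M})$ (it doesn't touch $M$ or its normal data up to homotopy) nor $\ind_{\PM}(\Dirac_{W,E},x)$, since the partitioned index is a coarse invariant and, more directly, is invariant under compactly supported perturbations of the geometric data by the homotopy invariance of the E-theory class $[\Phi_s]$. I would either invoke this stability explicitly or, cleanly, replace $x$ at the outset by a function agreeing with the normal coordinate near $M$ and apply \labelcref{item:coarseEq}.
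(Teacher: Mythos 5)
Your first half---the estimate that \(f(B_{i,s})\chi \to 0\) in norm for cutoffs \(\chi\) supported where \(x_i\) is bounded below, extracted from \labelcref{BAsymptoticLowerBound}---is indeed one of the two pillars of the paper's proof (it is \labelcref{AsymptoticHalf} there). The second pillar is missing from your sketch, and it is not a formality. After cutting down to the common region you are still comparing \(f(B_{1,s})P_1^{\leq b}\) with \(f(B_{2,s})P_2^{\leq b}\) (with \(P_i^{\leq b}\) the projection onto sections supported over \(W_i^{\leq b}\)), where each \(B_{i,s}\) is a globally defined operator on all of \(W_i\); that these compressions are asymptotically intertwined by the isometry induced by \(\gamma\) is exactly where the hypothesis that the data agree on \(W_i^{\leq a}\) must enter, and it does not follow from the invertibility-at-infinity estimate. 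One needs a locality principle for the functional calculus. The paper gets it from propagation speed: the rescaled operator \(B_{i,s}\) propagates at speed \(s^{-1}\), so for \(f\) with compactly supported Fourier transform and \(s\) large, \(f(B_{2,s})\) maps sections supported in \(W_2^{\leq b}\) (\(b<a\)) into sections supported in \(W_2^{\leq a}\), where it is intertwined with \(f(B_{1,s})\) by the isometry induced by \(\gamma\); general \(f \in \Cgr\) follows by approximation (this is \labelcref{SwapEquivalence}). Your appeal to ``the standard argument that asymptotic morphisms agreeing at infinity in the space direction represent the same E-theory class,'' with a pointer to the product construction in Higson's notes, does not supply this step: the two asymptotic morphisms are built from different global operators and do not literally agree anywhere, and their asymptotic agreement over the common half is the actual content of the lemma.

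Two smaller points. To compare the classes at all you need both morphisms on one Hilbert module; the paper extends the isometry over \(W^{\leq a}\) to an isometry \(V \colon \ELL^2(\mathcal{E}_2) \hookrightarrow \ell^2\mathcal{A}\) by Kasparov stabilization and proves \(\Phi_{1,s} \sim V\Phi_{2,s}V^\ast\) by combining the two estimates with \(P_1^{\leq b}V = VP_2^{\leq b}\); some such device should appear explicitly in your write-up. Your subsequent derivation of \labelcref{item:PMIT} from the lemma is fine and parallels the paper's reduction to the cylinder; in particular the compactly supported change of metric near \(M\) that you worry about is legitimate, since invariance under such a change is itself an instance of the lemma applied with \(a\) chosen below the modified region, so no separate homotopy-invariance input is needed there.
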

  \begin{proof}[Proof of the lemma]
    Using the Kasparov stabilization theorem, we view the Hilbert \(\mathcal{A}\)-module \(\ELL^2(\mathcal{E}_1)\) as a complemented submodule of the standard module \(\ell^2 \mathcal{A}\).
    We can also arrange it in such a way that its orthogonal complement contains another copy of \(\ell^2 \mathcal{A}\).
    The diffeomorphism \(\gamma\) and the corresponding bundle isometries induce an isometry
    \[V^{\leq a} \colon \ELL^2(\mathcal{E}_2|_{W_2^{\leq a}}) \xrightarrow{\cong} \ELL^2(\mathcal{E}_1|_{W_1^{\leq a}}) \subset \ELL^2(\mathcal{E}_1) \subset \ell^2\mathcal{A}.\]
    Then, since \(\ELL^2(\mathcal{E}_2|_{W_2^{\leq a}})\) is a complemented submodule of \(\ELL^2(\mathcal{E}_2)\), by Kasparov stabilization there exists an extension of \(V^{\leq a}\) to an isometry \(V \colon \ELL^2(\mathcal{E}_2) \hookrightarrow \ell^2 \mathcal{A}\).
    Furthermore, let \(B_{i,s}\) be the operator as in \labelcref{OperatorFamily} corresponding to \(W_i\), \(x_i\), \(E_i\), and \(\Phi_{i, s} \colon \Cgr \dashrightarrow \Kom_{\mathcal{A}}(\ELL^2(\mathcal{E}_i)), f \mapsto f(B_{i,s})\) the associated asymptotic morphism.
    The lemma is proved once we show that we have an asymptotic equivalence
    \begin{equation}
      \Phi_{1, s} \sim V \Phi_{2, s} V^\ast \colon \Cgr \dashrightarrow \Kom_{\mathcal{A}}(\ell^2 \mathcal{A}). \label{CutPasteEquivalence}
    \end{equation}
    Here we implicitly use the corner inclusion \(\Lin_{\mathcal{A}}(\ELL^2(\mathcal{E}_1)) \subset \Lin_{\mathcal{A}}(\ell^2 \mathcal{A})\).

    To prove \labelcref{CutPasteEquivalence}, let \(P_i^{\leq b} \in \Lin_{\mathcal{A}}(\ELL^2(\mathcal{E}_i))\) be the orthogonal projection onto \(\ELL^2(\mathcal{E}_i|_{W_i^{\leq b}})\).
    The crucial observation which does the main work is that for each \(b \in \R\) and \(f \in \Cgr\), we have
    \begin{equation}
      \lim_{s \to \infty} \|f(B_{i,s}) (1 - P_i^{\leq b})\| = 0.
      \label{AsymptoticHalf}
    \end{equation}
    This is because for \(u\) in the domain of \(B_i^2\) with \(\supp(u) \subseteq x_i^{-1}((c, \infty))\) for some fixed \(c \in \R\), it follows from \labelcref{BAsymptoticLowerBound} that the estimate
    \[
    \langle B_{i, s}^2 u \mid u \rangle \geq  \left(s - 1 + \frac{c}{s}\right)^2 \langle u \mid u \rangle \geq  \left(s - 2\right)^2 \langle u \mid u \rangle
    \]
    holds for sufficiently large \(s \gg 1\) independently of \(u\).
    In other words, on any region of the form \(x_i^{-1}((c, \infty))\) the operator \(B_{i,s}\) is eventually bounded from below by an arbitrarily large constant.
    If \(f\) is compactly supported, then \labelcref{AsymptoticHalf} follows from this fact by the same argument as in the proof of  \cite[Proposition~3.15]{HankePapeSchick:CodimensionTwoIndex}.
    For general \(f \in \Cgr\) it then follows by approximation.

    We also claim that for every \(b < a\), we have an asymptotic equivalence
    \begin{equation}\label{SwapEquivalence}
      \Phi_{1,s} V P_2^{\leq b} \sim V \Phi_{2,s} P_2^{\leq b}.
    \end{equation}
    Note that the propagation speed of the wave equation associated to the differential operator \(B_{i,s}\) is \(s^{-1}\) and hence goes to zero as \(s \to \infty\).
    So, if \(f \in \Cgr\) has compactly supported Fourier transform, then a standard Fourier theory argument for the operators \(B_{i,s}\) shows that there exists \(s_0 \geq 1\) such that for each \(s \geq s_0\) and \(u \in \ELL^2(\mathcal{E}_2|_{W_2^{\leq b}})\), we have \(f(B_{2,s}) u \in \ELL^2(\mathcal{E}_2|_{W_2^{\leq a}})\) and \(V^{\leq a} f(B_{2,s}) u = f(B_{1,s}) V^{\leq a} u\).
    This proves that for each fixed function \(f \in \Cgr\) with compactly supported Fourier transform, \labelcref{SwapEquivalence} is an equality for \(s \gg 1\).
    By approximation the asymptotic equivalence \labelcref{SwapEquivalence} follows.

    Note that by construction of \(V\), for every \(b \leq a\), the equality
    \begin{equation}\label{SwapTheP}
      P_1^{\leq b} V = V P_2^{\leq b}
    \end{equation}
    holds.
    We are now ready to prove \labelcref{CutPasteEquivalence} and thereby finish the proof of the lemma.
    Choose any \(b < a\).
    Then
    \begin{align*}
      \Phi_{1,s}
       \underset{\labelcref{AsymptoticHalf}}{\sim} \Phi_{1,s} P_1^{\leq b}
       = \Phi_{1,s} P_1^{\leq b} V V^\ast
       &\underset{\labelcref{SwapTheP}}{=} \Phi_{1,s} V P_2^{\leq b} V^\ast \\
       &\underset{\labelcref{SwapEquivalence}}{\sim} V \Phi_{2,s} P_2^{\leq b} V^\ast
       \underset{\labelcref{AsymptoticHalf}}{\sim} V \Phi_{2,s} V^\ast. \qedhere
    \end{align*}
  \end{proof}

  We are now ready to finish the proof of \Cref{PartitionedManifoldTheorem}\labelcref{item:PMIT}.
  Indeed, let \(M \times (-3,3)\) be a tubular neighborhood of \(M = x^{-1}(a)\) in \(W\).
  Using \labelcref{item:coarseEq}, we can modify the function \(x\) in such a way that it is just the projection onto the second factor on \(M \times (-2,2)\) without changing the index.
  We now modify \(W\) by cutting out \(x^{-1}([0, \infty))\) and replacing it with \(M \times [0, \infty)\).
  Using a linear interpolation, we can find a metric on the new manifold which has product structure on \(M \times [1, \infty)\) and agrees with the original one on \(x^{-1}((-\infty, 0])\).
  Similarly, we can find a new bundle \(E\)—together with bundle metric and connection—which on \(M \times [1, \infty)\) is just the pullback of the original bundle restricted to \(M\), and on \(x^{-1}((-\infty, 0])\) agrees with the original data.
  By \cref{CutPasteLemma} (applied to \(a = 0\)) this procedure does not change \(\ind_{\PM}(\Dirac_{W,E}, x)\).
  Using the reversed variant of \cref{CutPasteLemma}, we can furthermore replace \(x^{-1}((-\infty, 1])\) by \(M \times (-\infty, 1]\) (with all metric and bundle data being of product form) and still have the same index class.
  Therefore, the original partitioned manifold index \(\ind_{\PM}(\Dirac_{W,E}, x)\) is equal to \(\ind_{\PM}(\Dirac_{M \times \R, x^\ast E|_M}, x)\) with \(x\) being the projection onto the second factor.
  This finishes the proof of the theorem by \labelcref{eq:ProductCase}.
\end{proof}

\section{Injectivity of exterior products}\label{AppendixInj}
For \(i \in \{1,2\}\), let \(M_i\) be a closed \(m_i\)-dimensional spin manifold which is endowed with a flat bundle \(E_i \to M_i\) of finitely generated projective Hilbert \(A_i\)-modules.
On the product \(M_1 \times M_2\), we can form the exterior tensor product bundle \(E_1 \boxtimes E_2\) which is a flat bundle of finitely generated projective Hilbert \(A_1 \otimes A_2\)-modules.
 We use the spacial tensor product of \textCstar-algebras.
 It is a standard fact that in this situation the equality
\begin{equation}
  \ind(\Dirac_{M_1, E_1}) \times \ind(\Dirac_{M_2, E_2}) = \ind(\Dirac_{M_1 \times M_2, E_1 \boxtimes E_2})
  \label{eq:ProductFormula}
\end{equation}
holds in \(\KO_{m_1 + m_2}(A_1 \otimes A_2)\).
For instance, this can be readily verified in the spectral picture of K-theory mentioned in Appendix~\labelcref{AppendixPMI}.

In light of this product formula, it is sometimes important to know that taking the exterior product with the index class of a fixed manifold defines an injective map on K-theory.

A complete \(n\)-dimensional Riemannian manifold \(X\) is called \emph{hypereuclidean} if there exists a proper Lipschitz map \(X \to \R^n\) of degree one.
Moreover, we call \(X\) \emph{stably hypereuclidean} if \(X \times \R^k\) is hypereuclidean for some \(k \geq 0\).

\begin{prop}\label{HypereuclideanInjective}
  Let \(N\) be a closed manifold such that its universal covering \(\tilde{N}\) is stably hypereuclidean.
  Then  for every Real \textCstar-algebra \(A\), the exterior product map induced by the reduced Rosenberg index
  \[
    \KO_\ast(A) \xrightarrow{\alpha_{\red}(N) \times \blank} \KO_{\ast+n}(\CstarRed \pi_1 N \otimes A)
  \]
  is injective.
\end{prop}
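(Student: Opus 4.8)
The plan is to exhibit a natural left inverse of the exterior product map $\alpha_{\red}(N) \times \blank$. By a standard representability argument, such a natural transformation $\KO_{\ast+n}(\CstarRed\pi_1 N \otimes \blank) \to \KO_\ast(\blank)$ is the same datum as a \enquote{dual-Dirac} class $\gamma \in \mathrm{KKO}_{-n}(\CstarRed\pi_1 N, \R)$, and it left-inverts $\alpha_{\red}(N) \times \blank$ precisely when the Kasparov pairing $\langle \alpha_{\red}(N), \gamma \rangle$ equals $1 \in \KO_0(\R)$; concretely the splitting is then $x \mapsto (x \times \alpha_{\red}(N)) \otimes_{A \otimes \CstarRed\pi_1 N}(1_A \otimes \gamma)$, which in particular makes $\alpha_{\red}(N) \times \blank$ split injective.

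First I would reduce to the case where $\tilde N$ is genuinely hypereuclidean. Since $\tilde N \times \R^k$ is the universal covering of $N \times T^k$, the stable hypothesis says exactly that $N \times T^k$ has hypereuclidean universal covering for some $k \geq 0$. Granting the proposition in the (non-stable) hypereuclidean case and applying it to $N \times T^k$, the map $\alpha_{\red}(N \times T^k) \times \blank$ is injective for every $A$; by the product formula \labelcref{eq:ProductFormula}, after rearranging the tensor factors this map is the composite $(\alpha_{\red}(T^k) \times \blank) \circ (\alpha_{\red}(N) \times \blank)$, so $\alpha_{\red}(N) \times \blank$ is injective as well. Hence we may assume $\tilde N$ itself is hypereuclidean; fix a proper $L$-Lipschitz degree-one map $f \colon \tilde N \to \R^n$, and by rescaling the target metric obtain a family $f_t \coloneqq t f$ with $\operatorname{Lip}(f_t) \to 0$ as $t \to 0$.

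To build $\gamma$, recall that $\alpha_{\red}(N) = \ind(\Dirac_{N, \mathcal{L}_N})$ is, unwinding the definitions, the $\pi_1 N$-equivariant coarse index of the spinor Dirac operator $\Dirac_{\tilde N}$, living in $\KO_n$ of the equivariant Roe algebra $C^\ast(\tilde N)^{\pi_1 N} \cong \CstarRed\pi_1 N \otimes \Kom$. Since each $f_t$ is a coarse map (proper with uniformly bounded gradient), I would pull back along $f_t$ the Bott element of $\R^n$ — represented by a fixed bundle with connection on $\R^n$ that is trivial near infinity, handled via asymptotic morphisms and the spectral picture of Appendix~\labelcref{AppendixPMI} to accommodate non-compactness — and pass to the limit $t \to 0$. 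Over the closed base $N$ this is the classical production of a sequence of \emph{almost-flat} bundles in the sense of Connes--Gromov--Moscovici; assembled over $\CstarRed\pi_1 N$ it defines $\gamma \in \mathrm{KKO}_{-n}(\CstarRed\pi_1 N, \R)$. One then computes $\langle \alpha_{\red}(N), \gamma \rangle$ by naturality of the coarse assembly map for the proper coarse map $f$: because $f$ has degree one it preserves the fundamental class in locally finite K-homology, and coarse Bott periodicity for $\R^n$ identifies the pairing with the Fredholm index of the Bott--Dirac operator on $\R^n$, namely $1$.

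The main obstacle is the bookkeeping forced by the fact that $f$ is \emph{not} $\pi_1 N$-equivariant: there is no $\pi_1 N$-action on $\R^n$, so equivariant index classes cannot be pulled back verbatim. This is resolved by doing the pullback in the coarse (Roe) picture on $\tilde N$, where only coarse — not pointwise — compatibility of $f$ with the $\pi_1 N$-action is needed, and then transporting back through $C^\ast(\tilde N)^{\pi_1 N} \cong \CstarRed\pi_1 N \otimes \Kom$; equivalently, one carries out the almost-flat-bundle construction directly on $N$ and checks that the commutators of the almost-flat connection with $\Dirac_N$ tend to $0$ as $t \to 0$. Making this descent precise — together with the compatibility of the exterior products with the internal Kasparov product — is where essentially all of the work lies; the remaining ingredients are formal or are furnished by \labelcref{eq:ProductFormula} and Appendix~\labelcref{AppendixPMI}.
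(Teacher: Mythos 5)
Your reduction to the genuinely hypereuclidean case via \(N \times T^k\) and the product formula \labelcref{eq:ProductFormula} is fine, and the formal observation that a class \(\gamma \in \mathrm{KKO}_{-n}(\CstarRed \pi_1 N, \R)\) with \(\langle \alpha_{\red}(N), \gamma \rangle = 1\) would yield the desired natural splitting is also correct (though the claim that every such natural transformation \emph{is} a KK-class is an overstatement; you only need the easy direction). The genuine gap is that you never produce such a \(\gamma\), and the route you sketch does not. The pulled-back Bott data \(f_t^\ast(\cdot)\) live on the noncompact cover \(\tilde{N}\) and are not \(\pi_1 N\)-invariant, so there is no almost flat bundle or connection on \(N\) whose commutators with \(\Dirac_N\) you could estimate --- the ``equivalently, carry out the construction directly on \(N\)'' alternative simply is not available. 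Likewise, ``transporting back through \(C^\ast(\tilde{N})^{\pi_1 N} \cong \CstarRed\pi_1 N \otimes \Kom\)'' is exactly the unresolved descent problem: a non-equivariant construction over the Roe algebra of \(\tilde{N}\) does not induce a map back into (or a KK-class over) the equivariant algebra. Making the almost-flat idea work for infinite, non-equivariant covering data is the hard content of Connes--Gromov--Moscovici/Hanke--Schick type arguments, and even there the output is non-vanishing of a (typically maximal) Rosenberg index, not a class over the \emph{reduced} algebra pairing exactly to \(1\); nothing in the hypereuclidean hypothesis is known to furnish the dual-Dirac-type element your argument requires.

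The paper avoids this issue by never returning to the group-algebra side at all: the left inverse is assembled coarsely as \(\Phi^{-1} \circ f_\ast \circ \mathrm{r}_\ast\), where \(\mathrm{r}_\ast\) forgets \(\Lambda\)-invariance, \(\Roe[\Lambda](X;A) \to \Roe(X;A)\), then \(f_\ast\) pushes forward along the degree-one proper Lipschitz map at the level of Roe algebras with coefficients, and \(\Phi\), the external product with \(\ind_{\mathrm{c}}(\Dirac_{\R^n})\) over \(\R^n\), is inverted by iterated coarse Mayer--Vietoris boundary maps; the degree-one hypothesis enters only to see that \(f_\ast\) carries the coarse index class of \(\Dirac_X\) to that of \(\Dirac_{\R^n}\). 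To salvage your approach you would have to prove existence of \(\gamma\) with pairing \(1\) for this entire class of fundamental groups --- a substantially stronger statement than the proposition --- or recast your splitting in the coarse category as the paper does.
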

In particular, this holds for \(N = \Sphere^1\) or any non-positively curved manifold \(N\).
Moreover, by a result of Dranishnikov~\cite{Dranishniko:Hypereuclidean}, if \(N\) is aspherical and \(\pi_1 N\) has finite asymptotic dimension, then \(\tilde{N}\) is stably hypereuclidean.
The statement of \cref{HypereuclideanInjective} is analogous to \cite[Corollary~5.8]{Zeidler:PositiveScalarCurvature} and follows essentially from the same proof.
Moreover, in~\cite{EngelWulffZeidler_CrossProducts} a more general framework to obtain injectivity statements of this type is provided.

\begin{proof}
  Let \(X = \tilde{N}\).
  We use the \emph{Roe algebra} \(\Roe(X; A)\) with coefficients in a \textCstar-algebra \(A\).
  We concretely construct \(\Roe(X; A)\) on the Hilbert \(A\)-module \(\ELL^2(\SpinBdl) \otimes A\), where \(\ELL^2(\SpinBdl)\) denotes \(\ELL^2\)-sections of the spinor bundle.
  For definitions of the Roe algebra with coefficients, see for instance \cite{HigsonPedesenRoe:Controlled} or \cite[Definition~3.2]{HankePapeSchick:CodimensionTwoIndex}.
  It suffices to consider the case that \(X\) is hypereuclidean.
  Let \(f \colon X \to \R^n\) be a degree one proper Lipschitz map.
  It induces a map \(f_\ast \colon \KO_{\ast}(\Roe (X; A) ) \to \KO_\ast(\Roe(\R^n; A))\) on the K-theory of the Roe algebra for any coefficient \textCstar-algebra.
  Let \(\Lambda = \pi_1 N\).
  The equivariant Roe algebra \(\Roe[\Lambda](X; A)\) can be canonically identified with \(\Roe[\Lambda](X; \R) \otimes A\) and is Morita equivalent to \(\CstarRed \Lambda \otimes A\).
  The latter holds for instance by \cite{Roe:ComparingAnalyticAssembly}.
  In view of this Morita equivalence, the reduced Rosenberg index \(\alpha_{\red}(N) \in \KO_{n}(\CstarRed \Lambda)\) identifies with the equivariant coarse index \(\ind_{\mathrm{c}}^\Lambda(\Dirac_X) \in \KO_n(\Roe[\Lambda](X; \R))\) and it suffices to prove injectivity of the exterior product map induced by the class \(\ind_{\mathrm{c}}^\Lambda(\Dirac_X)\).

  Note that in the non-equivariant case, in general \(\Roe(X;\R) \otimes A \subsetneq \Roe(X; A)\).
  However, on \(\R^n\), we can still consider the following composition
  \[
    \Phi \colon \KO_p(A) \xrightarrow{\ind_{\mathrm{c}}(\Dirac_{\R^n}) \times \blank} \KO_{n+p}(\Roe(\R^n; \R) \otimes A) \to \KO_{n+p}(\Roe(\R^n; A)).
  \]
  The homomorphism \(\Phi\) is an isomorphism.
  The inverse is given by the iterated application of boundary maps in the coarse Mayer--Vietoris sequence (\cite[Corollary~2.11]{SchickZadeh:MultiPart}, \cite[Corollary~9.5]{HigsonPedesenRoe:Controlled}).
  \[\KO_{n+p}(\R^n; A) \xrightarrow{\partial_{\mathrm{MV}}^n} \KO_{p}(\Roe(\{0\}; A)) \cong \KO_{p}(A)\]
  Using the restriction map \(\mathrm{r} \colon \Roe[\Lambda](X;A) \to \Roe(X;A)\) which forgets \(\Lambda\)-invariance, we obtain the following diagram
  \[
    \begin{tikzcd}[column sep=small]
      \KO_p(A) \rar{\ind_{\mathrm{c}}^\Lambda(\Dirac_{X}) \times \blank} \ar[dd, "\ind_{\mathrm{c}}(\Dirac_{\R^n}) \times \blank", swap] \ar[ddrr, "\Phi"', "\cong"] & \KO_{n+p}(\Roe[\Lambda](X;\R) \otimes A) \rar[equal]& \KO_{n+p}(\Roe[\Lambda](X;A)) \dar["\mathrm{r}_\ast"] \\
      & & \KO_{n+p}(\Roe(X;A)) \dar["f_\ast"] \\
      \KO_{n+p}(\Roe(\R^n; \R) \otimes A) \ar[rr] & & \KO_{n+p}(\Roe(\R^n; A)).
    \end{tikzcd}
  \]
  It is commutative  since \(f\) has degree one and hence takes the coarse index class \(\ind_{\mathrm{c}}(\Dirac_{X}) \in \KO_n(\Cstar(X; \R))\) to \(\ind_{\mathrm{c}}(\Dirac_{\R^n}) \in \KO_n(\Cstar(\R^n; \R)) \cong \Z\).
  Since \(\Phi\) is an isomorphism, the desired injectivity follows.
\end{proof}
\begin{rem}
  Another potential way to obtain such an injectivity statement would be to use the Künneth formula—at least if the \textCstar-algebra \(\CstarRed \pi_1 N\) is in a class that admits such a formula.
  Especially in the case \(N = \Sphere^1\) this can be done straightforwardly for complex K-theory, compare~\cite[Proposition~4.2]{HankePapeSchick:CodimensionTwoIndex}.
  However, the Künneth formula in the realm of Real \textCstar-algebras is not as straightforward (see~\cite{Boersema:RealKuenneth}).
  This is why we preferred to exhibit the argument above which proves injectivity directly.
\end{rem}
\printbibliography
\end{document}